\apptocmd{\thebibliography}{\setlength{\itemsep}{-3pt}}{}{}
\theoremstyle{plain}
\newtheorem{theorem}{Theorem}[section]
\newtheorem{proposition}[theorem]{Proposition}
\newtheorem{corollary}[theorem]{Corollary}
\theoremstyle{definition}
\newtheorem{definition}[theorem]{Definition}
\newtheorem{example}[theorem]{Example}
\theoremstyle{remark}
\newcommand{\R}{\mathbb R}
\newcommand{\G}{\mathcal G}
\newcommand{\J}{\mathcal J}
\newcommand{\F}{\mathcal F}
\newcommand{\K}{\mathcal K}
\newcommand{\xm}{\mathfrak X(M)}
\newcommand{\fm}{\mathfrak F(M)}
\newcommand{\dm}{\Lambda^1 (M)}
\newcommand{\GE}{\Gamma (E)}
\newcommand{\GTM}{\Gamma (\mathbb TM)}
\newcommand{\TM}{\mathbb TM}
\newcommand{\TTM}{TM\oplus T^*M}
\newcommand{\al}{\alpha}
\newcommand{\var}{\varepsilon}
\def\blfootnote{\gdef\@thefnmark{}\@footnotetext}
\title{\bf Metric polynomial structures on generalized geometry
  \blfootnote{\em E-mail addresses:}
  \blfootnote{Fernando Etayo: \href{mailto:fernando.etayo@unican.es}{fernando.etayo@unican.es}}
  \blfootnote{Pablo Gómez-Nicolás: \href{mailto:pablo.gomeznicolas@unican.es}{pablo.gomeznicolas@unican.es}}
  \blfootnote{Rafael Santamaría: \href{mailto:rsans@unileon.es}{rsans@unileon.es}}
}
\author[*]{Fernando Etayo}
\author[*]{Pablo Gómez-Nicolás}
\author[$\dag$]{Rafael Santamaría}
\affil[*]{\footnotesize Departamento de Matemáticas, Estadística y Computación, Facultad de Ciencias, Universidad de Cantabria, Avda. de los Castros, s/n, 39071, Santander, Spain}
\affil[$\dag$]{\footnotesize Departamento de Matemáticas, Escuela de Ingenierías Industrial, Informática y Aeroespacial, Universidad de León, Campus de Vegazana, 24071, León, Spain}
\date{\small \today}
\begin{document}

\maketitle

\begin{abstract}
\noindent In this document, we study the interaction between different geometric structures that can be defined as morphisms of sections of the generalized tangent bundle $\TM := \TTM\to M$. In particular, we show the behaviour of various generalized polynomial structures with respect to different metrics on $\TM$, in the frame of $(\al, \var)$-metric structures; and the commutation or anti-commutation of generalized polynomial structures, forming triple structures.
\end{abstract}

{\noindent\small {\bf 2020 Mathematics Subject Classification:} 53D18, 53C15, 53D05}

{\noindent\small {\bf Keywords:} Generalized tangent bundle, almost product Riemannian structure, almost para-Hermitian structure, almost Hermitian structure, almost Norden structure, triple structure, generalized Kähler geometry}

\thispagestyle{empty}
\section{Introduction}
\label{SECTION:INTRODUCTION}

The generalized tangent bundle was introduced by N. Hitchin in 2003 in \cite{HITCHIN2003}, and it was later studied in depth by M. Gualtieri \cite{GUALTIERI2004, GUALTIERI2011}. This vector bundle is defined as the Whitney sum of the tangent and the cotangent bundle of the base manifold $M$, that is, $\TM := \TTM\to M$. The first studies in this area were focused in generalized complex structures (by M. Gualtieri, \cite{GUALTIERI2011}) and in generalized paracomplex structures (by A. Wade, \cite{WADE2004}). These morphisms can be defined as metric polynomial structures on $\TM$ which are also requiered to be compatible with a metric on the bundle $\TM$ that emerges naturally. Both generalized complex and paracomplex structures have been analysed since then (e.g. \cite{CULMAGODOYSALVAI2020}), focusing in specific examples and using different points of view. For example, there are multiple generalized complex and paracomplex structures that can be induced using geometric structures defined on a manifold $M$.

Although originally generalized complex and paracomplex structures were asked to be compatible with the natural metric on $\TM$ mentioned before, later works as \cite{NANNICINI2006, NANNICINI2010, NANNICINI2013, NANNICINI2016, NANNICINI2019} by A. Nannicini suggest that we may omit this condition and study a wider range of structures. If we do this, we can define these generalized structures in a similar way than complex and paracomplex structures defined on a manifold. This terminology agrees with specialised texts in vector bundles, such as \cite{POOR1981} by W. A. Poor, where geometric structures on a vector bundle $E\to M$ are defined as morphisms of sections of $E$. Following this line, we define geometric structures for any vector bundle $E$ and specialise them to the case $E = \TM$. We also use a different terminology when a generalized complex or paracomplex structure is compatible with a metric defined on $\TM$, in a similar way than metric polynomial structures defined on the manifold \cite{BURESVANZURA1976}.

In this document we study the interaction of different polynomial structures defined on $\TM$ with other geometric structures, in particular with generalized metrics. The paper is structured as follows:

In Section \ref{SECTION:PRELIMINARIES}, based in classical texts as \cite{POOR1981}, we define different geometric structures on any vector bundle $E\to M$ such as metrics, symplectic structures and polynomial structures. We also check the possible interactions between a polynomial structure and a metric in the form of $(\al, \var)$-metric structures; and the interactions between two polynomial structures in the form of triple structures. After that, we specialise these definitions for the generalized tangent bundle $E = \TM$, also called big tangent bundle in some references (e.g. \cite{VAISMAN2015}). We fix the notation and show the structures that arise naturally in the bundle: the pseudo-Riemannian metric $\G_0$, the symplectic structure $\Omega_0$ (both presented in \cite{HITCHIN2003, GUALTIERI2004}) and the paracomplex structure $\F_0$ (introduced in \cite{WADE2004}).

Section \ref{SECTION:GENERALIZEDMETRICS} focuses in the metrics that can be defined on $\TM$. We show that any generalized metric or generalized symplectic structure is related with the canonical metric $\G_0$. We also discuss the most straightforward way to obtain a generalized metric using a metric defined on the base manifold.

In Section \ref{SECTION:GENERALIZEDALVARMETRICSTRUCTURES} we check the interaction between various polynomial structures and the canonical metric or the generalized induced metric by a metric on the manifold. These interactions are studied in the frame of generalized $(\al, \var)$-metric structures. We distinguish between two types of generalized $(\al, \var)$-metric structures: the natural ones, that involve the natural generalized metric $\G_0$ or the natural generalized paracomplex structure $\F_0$; and the induced ones, that implicate induced generalized structures on $\TM$.

Finally, in Section \ref{SECTION:GENERALIZEDTRIPLESTRUCTURES} we verify which generalized almost complex and almost product structures satisfy properties of commutation or anti-commutation, conforming triple structures on the generalized tangent bundle. We also focus in a subset of the generalized almost bicomplex structures which has been heavily studied in the literature. These structures, called generalized Kähler structures, were introduced in \cite{GUALTIERI2004} and have been studied by many other authors since then (for example, \cite{VANDERLEERDURAN2018, LINTOLMAN2006}). It can also be proven that these generalized structures are equivalent to a particular structure defined on the manifold.

It is worth noting that in this paper we do not study the integrability of any structure. Therefore, each polynomial structure is preceded by the adverb ``almost'' (e.g. generalized almost complex structures).

\section{Preliminaries}
\label{SECTION:PRELIMINARIES}

\subsection{Geometric structures in vector bundles}

To introduce the relevant definitions, we work with any base manifold $M$ and a vector bundle $E\to M$, with $E_p\subset E$ denoting the fibre at $p\in M$. We define various geometric structures on $E$ as morphisms involving the $\fm$-module of sections of $E$, $\GE$, where $\fm$ is the ring of differentiable functions on $M$. We follow the terminology from specialised texts such as \cite{POOR1981}.

\begin{definition}[{\cite[Def. 3.1]{POOR1981}}]
\label{DEFINITION:METRICVECTORBUNDLE}
A \emph{metric} on the vector bundle $E$ is a differentiable morphism $g\colon \GE\times \GE\to \fm$ that is bilinear, symmetric and nondegenerate.
\end{definition}

\begin{definition}[{\cite[Def. 8.4]{POOR1981}}]
A \emph{symplectic structure} on the vector bundle $E$ is defined as a differentiable morphism $\omega\colon \Gamma(E)\times \Gamma(E)\to \mathfrak F(M)$ that is bilinear, skew-symmetric and nondegenerate.
\end{definition}

If a metric is positive definite it is named \emph{Riemannian metric}, whilst if it is not positive definite it is called \emph{pseudo-Riemannian metric}. For a pseudo-Riemannian metric, we can define its \emph{signature} $(r, s)$ as the signature of the metric $g_p\colon E_p\times E_p\to \R$ that is obtained restricting $g$ to each fibre $E_p$ as a vector space. When $r = s$, the metric signature is said to be \emph{neutral}.

In the case of a metric, if we work with the tangent bundle $E = TM$, we recover the usual concepts of Riemannian manifold or pseudo-Riemannian manifold $(M, g)$. If we work with the generalized tangent bundle $E = \TM$, we will talk about generalized metrics. Although the definition of a generalized metric by M. Gualtieri in \cite[Def. 1.9]{GUALTIERI2014} is more restrictive, in this paper we follow the terminology from Definition \ref{DEFINITION:METRICVECTORBUNDLE} taking the specific case $E = \TM$ in order to study more examples.

For a symplectic structure, taking the particular case $E = TM$ we retrieve the concept of almost symplectic manifold $(M, \omega)$ (an almost symplectic manifold is called a symplectic manifold when $d\omega = 0$). In the case $E = \TM$, these morphisms will be called generalized symplectic structures.

Using a metric or a symplectic structure on a vector bundle we can obtain two isomorphisms between $\GE$ and $\Gamma(E^*)$, known as \emph{musical isomorphisms}. Using a symplectic structure $\omega$ (it is analogous for a metric $g$), these morphisms are the following.

\begin{definition}[{\cite[Def. 3.8]{POOR1981}}]
The \emph{flat isomorphism} $\flat_{\omega}\colon \GE\to \Gamma(E^*)$ associated to $\omega$ takes each $X\in \GE$ to the section $\flat_{\omega}X\in \Gamma(E^*)$, defined as $(\flat_{\omega}X)(Y) := \omega(X, Y)$ for every $Y\in \GE$. The \emph{sharp isomorphism} associated to $\omega$, $\sharp_{\omega}\colon \Gamma(E^*)\to \GE$, is the inverse of $\flat_{\omega}$.
\end{definition}

Other structures that can be defined on any vector bundle $E\to M$ are known as polynomial structures. These kind of structures are specially relevant in generalized geometry.

\begin{definition}
\label{DEFINITION:POLYNOMIALSTRUCTUREVECTORBUNDLE}
A \emph{polynomial structure} on the vector bundle $E$ is defined as a morphism $J\colon \GE\to \GE$ with a minimal polynomial $P$ such that $P(J) = 0$. When $P = x^2 + 1$ (that is, $J^2 = -Id$) $J$ is called an \emph{almost complex structure} \cite[Def. 1.58]{POOR1981}; if $P = x^2 - 1$ (in other words, $J^2 = +Id$) $J$ is an \emph{almost product structure}. When the $+1, -1$-eigenbundles of an almost product structure have the same dimension, it is called an \emph{almost paracomplex structure}.
\end{definition}

When we work with the tangent bundle $E = TM$, the usual concepts of almost complex, almost product and almost paracomplex manifolds $(M, J)$ arise. When we take the generalized tangent bundle $E = \TM$, we call these morphisms generalized almost complex, generalized almost product and generalized almost paracomplex structures. As in the case of the generalized metrics, the definition of a generalized almost complex structure by M. Gualtieri in \cite[Def. 3.1]{GUALTIERI2011} and of a generalized almost paracomplex structure by A. Wade in \cite[Def. 2.2]{WADE2004} are more restrictive. Therefore, in order to study other examples that are also interesting, we follow the terminology given in Definition \ref{DEFINITION:POLYNOMIALSTRUCTUREVECTORBUNDLE} for the specific bundle $E = \TM$.

It is worth noting that a polynomial structure $J$ on a vector bundle $E$ induces a geometric structure $J^*$ on the dual vector bundle $E^*$. This \emph{dual structure} is the endomorphism $J^*\colon \Gamma(E^*)\to \Gamma(E^*)$ such that for each $\xi\in \Gamma(E^*)$ the section $J^*\xi$ is defined as $(J^*\xi)(X) := \xi(JX)$ for every $X\in \GE$. It is immediate to see that the minimal polynomial of $J^*$ is the same that the one associated to $J$.

In this paper, we are going to focus in the interaction between different geometric structures. First, we study the possible interactions between a metric $g$ and an almost complex or almost product structure $J$. If this endomorphism interacts with the metric $g$ as an isometry or anti-isometry, we reach the following definition.

\begin{definition}
An \emph{$(\al, \var)$-metric structure} on the vector bundle $E$, with $\al, \var\in \{+1, -1\}$, is defined as a structure $(J, g)$ formed by a polynomial structure $J\colon \GE\to \GE$ and a metric $g\colon \GE\times \GE\to \fm$ such that
	\begin{equation*}
	  J^2 = \al Id, \enspace\enspace\enspace g(JX, JY) = \var g(X, Y),
	\end{equation*}
for every $X, Y\in \GE$. Depending on the values of $\al$ and $\var$, the structure receives a different name:

\begin{itemize}
  
\item If $\al = +1$, $\var = +1$, the structure $(J, g)$ is called \emph{Riemannian} or \emph{pseudo-Riemannian almost product}, depending on wether $g$ is a Riemannian or a pseudo-Riemannian metric. If $g$ is Riemannian and $J$ is an almost paracomplex structure, then $(J, g)$ is an \emph{almost para-Norden structure}.
  
\item If $\al = +1$, $\var = -1$, the structure $(J, g)$ is called \emph{almost para-Hermitian}. Because of the compatibility condition between $J$ and $g$, the metric must be pseudo-Riemannian with neutral signature and $J$ must be an almost paracomplex structure.
  
\item If $\al = -1$, $\var = +1$, the structure $(J, g)$ is called \emph{almost Hermitian} or \emph{indefinite almost Hermitian}, depending on wether $g$ is Riemannian or pseudo-Riemannian (the second term comes from \cite{BARROSROMERO1982}).
  
\item If $\al = -1$, $\var = -1$, the structure $(J, g)$ is called \emph{almost Norden}. As in the case of the almost para-Hermitian structures, the metric $g$ must be pseudo-Riemannian with neutral signature.
    
\end{itemize}
\end{definition}

It is easy to see that the condition $g(JX, JY) = \var g(X, Y)$ is equivalent to $g(JX, Y) = \al\var g(X, JY)$. Because of this, there is an important geometric structure that can be defined using the metric $g$ and the endomorphism $J$. We call this structure the \emph{fundamental tensor} associated to $(J, g)$, and it is defined as $\phi\colon \GE\times \GE\to \fm$ such that
	\begin{equation*}
	  \phi(X, Y) := g(JX, Y),
	\end{equation*}
for every $X, Y\in \GE$. It can be easily checked that when $\al\var = +1$ the morphism $\phi$ is a metric on $E$ (called the \emph{twin metric}), whereas when $\al\var = -1$ it is a symplectic structure on the vector bundle (called the \emph{fundamental symplectic structure}). The musical isomorphisms of the structure $\phi$ are related to the musical isomorphisms of $g$ via the endomorphism $J$, as it is stated in the following proposition.

\begin{proposition}[{\cite[Prop. 2.8]{ETAYOGOMEZNICOLASSANTAMARIA2022}}]
\label{PROPOSITION:EQUALITIESFLATSHARP}
Let $(J, g)$ be an $(\al, \var)$-metric structure on the vector bundle $E$ with fundamental tensor $\phi$. Then, the following equalities hold:
	\begin{equation}
	  \begin{gathered}
	    \flat_{\phi} = \flat_g J = \al\var J^*\flat_g,  \\
	    \var \sharp_{\phi} = \sharp_g J^* = \al\var J\sharp_g.
	  \end{gathered}
	\label{EQUATION:EQUALITIESFLATSHARP}
	\end{equation}
\end{proposition}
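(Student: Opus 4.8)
The plan is to establish the two flat identities first by direct pointwise evaluation, and then to obtain the two sharp identities by inverting them, exploiting the fact that both $J$ and its dual $J^*$ are their own inverses up to the sign $\al$.

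First I would prove $\flat_\phi = \flat_g J$. For arbitrary sections $X, Y \in \GE$, evaluating the left-hand side gives $(\flat_\phi X)(Y) = \phi(X, Y) = g(JX, Y)$ straight from the definition of $\phi$ and of the flat isomorphism associated to $\phi$; evaluating the right-hand side gives $(\flat_g JX)(Y) = g(JX, Y)$ from the definition of $\flat_g$. Since $X, Y$ are arbitrary, the two morphisms coincide. For the second flat identity $\flat_g J = \al\var J^*\flat_g$, I would use the equivalent compatibility condition $g(JX, Y) = \al\var g(X, JY)$ noted before the statement, together with the definition of the dual structure. Evaluating $(\al\var J^*\flat_g X)(Y) = \al\var (\flat_g X)(JY) = \al\var g(X, JY) = g(JX, Y)$, which again matches $(\flat_g JX)(Y)$, gives the claim.

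For the sharp identities I would invert the flat ones. The crucial remark is that $J^2 = \al\,Id$ with $\al\in\{+1,-1\}$ forces $J^{-1} = \al J$, and since the minimal polynomial of $J^*$ equals that of $J$ we likewise have $(J^*)^{-1} = \al J^*$. Taking inverses in $\flat_\phi = \flat_g J$ yields $\sharp_\phi = (\flat_g J)^{-1} = J^{-1}\sharp_g = \al J\sharp_g$, whence $\var\sharp_\phi = \al\var J\sharp_g$. Inverting the second flat identity $\flat_g J = \al\var J^*\flat_g$ (using $(\al\var)^{-1} = \al\var$) produces $J^{-1}\sharp_g = \al\var\,\sharp_g (J^*)^{-1}$, that is, $\al J\sharp_g = \al\var\,\sharp_g\,\al J^* = \var\,\sharp_g J^*$; multiplying through by $\var$ gives $\sharp_g J^* = \al\var J\sharp_g$, completing the chain.

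There is no serious obstacle here: the argument is a routine verification once $\phi$ is known to be nondegenerate, so that $\sharp_\phi$ exists, which is guaranteed by the remark preceding the statement that $\phi$ is either a metric or a symplectic structure according to the sign of $\al\var$. The only point requiring care is the bookkeeping of the signs $\al, \var$ under inversion, where one repeatedly uses $\al^2 = \var^2 = 1$; the single conceptual ingredient is the identity $J^{-1} = \al J$ together with its analogue $(J^*)^{-1} = \al J^*$.
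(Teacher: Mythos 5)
Your proof is correct. Note, however, that the paper does not prove this proposition at all: it is imported by citation from the authors' earlier work (Prop.\ 2.8 of the reference labelled ETAYOGOMEZNICOLASSANTAMARIA2022), so there is no in-paper argument to compare against. Judged on its own merits, your argument is complete: the two flat identities follow by pointwise evaluation exactly as you say (using the equivalence $g(JX,Y)=\al\var\, g(X,JY)$, which the paper records just before the statement), and your inversion step is sound because $\flat_g$, $\flat_\phi$ are isomorphisms (nondegeneracy of $g$, and of $\phi$ as twin metric or fundamental symplectic structure) and $J^{-1}=\al J$, $(J^*)^{-1}=\al J^*$. The sign bookkeeping checks out: $(\flat_g J)^{-1}=\al J\sharp_g$ and $(\al\var J^*\flat_g)^{-1}=\var\sharp_g J^*$, giving $\sharp_g J^*=\al\var J\sharp_g$ and $\var\sharp_\phi=\al\var J\sharp_g$ as claimed. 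The only stylistic alternative worth mentioning is to prove the sharp identities by direct evaluation as well (compose each candidate with $\flat_\phi$ or $\flat_g$ and check it is the identity), which avoids manipulating inverses of compositions; your route via $J^{-1}=\al J$ is shorter and equally rigorous.
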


The term of $(\al, \var)$-metric structure comes from \cite{ETAYOSANTAMARIA2000, ETAYOSANTAMARIA2016}, where a lot of properties have been studied (using Riemannian metrics when $\var = +1$) on the bundle $E = TM$. In this case, we use the notation $(M, J, g)$ for an $(\al, \var)$-metric manifold. When the vector bundle considered is $E = \TM$, these structures will be called generalized $(\al, \var)$-metric structures.

The last structures that are defined in this section arise when two polynomial structures on a vector bundle commute or anti-commute. Then, we can define the following geometric structures.

\begin{definition}
\label{DEFINITION:TRIPLESTRUCTUREVECTORBUNDLE}
A \emph{triple structure} on a vector bundle $E$ can be defined as the structure $(F, J, K)$ composed by three polynomial structures $F, J, K\colon \GE\to \GE$ such that they are almost complex or almost product (not necessarily the same type), $F$ and $J$ commute or anti-commute (that is, $JF = \lambda FJ$ with $\lambda\in \{+1, -1\}$), and $K = FJ$. There are four types of triple structures:

\begin{itemize}

\item \emph{Almost hypercomplex structures}. In this case, $F, J, K$ are almost complex structures that anti-commute, that is, $F^2 = J^2 = K^2 = FJK = -Id$.
  
\item \emph{Almost bicomplex structures}. In this case, $F, J$ are almost complex structures and $K$ is an almost product structure, such that $F, J, K$ commute. In other words, $F^2 = J^2 = -K^2 = -FJK = -Id$.
    
\item \emph{Almost biparacomplex structures}. In this case, $F, J$ are almost product structures (it can be proven that in this case $F, J$ are almost paracomplex structures) and $K$ is an almost complex structure, such that $F, J, K$ anti-commute. In other words, $F^2 = J^2 = -K^2 = -FJK = +Id$.
  
\item \emph{Almost hyperproduct structures}. In this case, $F, J, K$ are almost product structures that commute, that is, $F^2 = J^2 = K^2 = FJK = +Id$.

\end{itemize}
\end{definition}

Naturally, the endomorphisms that conform a triple structure can interact with a metric $g$, generating isometric or anti-isometric structures that can behave as $(\al, \var)$-metric structures on the vector bundle $E$. The behaviour of a triple structure with a metric is included in the next statement, that is straightforward to check.

\begin{proposition}
Let $(F, J, K)$ be a triple structure on the vector bundle $E$ and $g$ a metric on $E$ such that there are $\var_1 \var_2\in \{+1, -1\}$ with $g(FX, FY) = \var_1 g(X, Y)$ and $g(JX, JY) = \var_2 g(X, Y)$ for each $X, Y\in \GE$. Then, we have that $g(KX, KY) = \var_1\var_2 g(X, Y)$. In particular, if $F$ is an $(\al_1, \var_1)$-metric structure and $J$ is an $(\al_2, \var_2)$-metric structure on the vector bundle $E$ both related to a suitable metric $g$, and $JF = \lambda FJ$ with $\lambda\in \{+1, -1\}$, then $K$ is a $(\lambda\al_1\al_2, \var_1\var_2)$-metric structure.
\end{proposition}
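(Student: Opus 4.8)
The plan is to prove the two assertions in turn, each by a direct computation that substitutes the defining relation $K = FJ$ and then invokes the hypotheses on $F$ and $J$.

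For the first assertion, I would begin from $g(KX, KY) = g(FJX, FJY)$, using $K = FJ$, and then apply the two compatibility hypotheses successively. First I treat $FJX$ and $FJY$ as the images under $F$ of the sections $JX$ and $JY$, so that the hypothesis on $F$ gives $g(F(JX), F(JY)) = \var_1 g(JX, JY)$; then I apply the hypothesis on $J$ to obtain $\var_1 g(JX, JY) = \var_1\var_2 g(X, Y)$. Chaining these equalities yields $g(KX, KY) = \var_1\var_2 g(X, Y)$ for every $X, Y\in\GE$, which is exactly the claimed compatibility constant for $K$.

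For the \emph{in particular} part, the metric compatibility of $K$ with constant $\var_1\var_2$ is already supplied by the first assertion, so the only remaining point is to verify that $K$ is itself a polynomial structure of the asserted square. I would compute $K^2 = FJFJ$ and insert the (anti-)commutation relation $JF = \lambda FJ$ into the middle adjacent pair to get $K^2 = F(\lambda FJ)J = \lambda F^2 J^2$. Since $F^2 = \al_1 Id$ and $J^2 = \al_2 Id$ by hypothesis, this collapses to $K^2 = \lambda\al_1\al_2\, Id$. Hence $K$ is an almost complex structure when $\lambda\al_1\al_2 = -1$ and an almost product structure when $\lambda\al_1\al_2 = +1$; combined with the relation $g(KX, KY) = \var_1\var_2 g(X, Y)$, this exhibits $K$ as a $(\lambda\al_1\al_2, \var_1\var_2)$-metric structure.

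As the statement itself announces, there is no genuine obstacle. The single point that demands a little care is the order in which the (anti-)commutation relation is applied when simplifying $K^2$: one must move exactly one adjacent pair $JF$ past each other, picking up a single factor of $\lambda$, rather than inadvertently applying the relation twice and producing $\lambda^2 = +1$. Everything else is a mechanical substitution of the defining identities for $F$, $J$, and $K = FJ$.
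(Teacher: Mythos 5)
Your proof is correct and is exactly the direct verification the paper has in mind: the paper states this proposition without proof, calling it ``straightforward to check,'' and your two computations ($g(KX,KY)=g(FJX,FJY)=\var_1\var_2\,g(X,Y)$ and $K^2=F(JF)J=\lambda F^2J^2=\lambda\al_1\al_2\,Id$) are that check. Your cautionary remark about applying the relation $JF=\lambda FJ$ only once is well taken and the argument is complete.
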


These structures have been studied, for example, in \cite{HSU1960, ETAYOSANTAMARIATRIAS2004} for the tangent bundle $E = TM$. For the generalized tangent bundle $E = \TM$, these generalized triple structures will be called generalized almost hypercomplex, generalized almost bicomplex, generalized almost biparacomplex and generalized almost hyperproduct.

\subsection{Generalized geometry}

From now on, we work with the \emph{generalized tangent bundle}, defined as $\TM := \TTM\to M$. As we have indicated before, we want to specialise the definitions given for any vector bundle $E\to M$ to the case $E = \TM$, working with sections $\GTM = \xm\oplus \dm$, with $\xm$ as the $\fm$-module of vector fields on $M$ and $\dm$ as the $\fm$-module of 1-forms on $M$.

It is important to introduce the matrix notation that will be used throughout this document. Any section endomorphism $\K\colon \GTM\to \GTM$ can be written as
	\begin{equation}
	  \mathcal K =
	  \left(
	    \begin{array}{cc}
		  H     & \sigma  \\
		  \tau  & K
		\end{array}
	  \right),
	  \label{EQUATION:MATRIXNOTATION}
	\end{equation}
where $H\colon \mathfrak X(M)\to \mathfrak X(M), \enspace \sigma\colon \Lambda^1(M)\to \mathfrak X(M), \enspace \tau\colon \mathfrak X(M)\to \Lambda^1(M), \enspace K\colon \Lambda^1(M)\to \Lambda^1(M)$. This means that for any $X + \xi\in \Gamma(\mathbb TM)$ we can write
	\begin{equation*}
	  \mathcal K(X + \xi) = \left(
	    \begin{array}{cc}
	      H     & \sigma  \\
	      \tau  & K
	    \end{array}
	  \right) \left(
	    \begin{array}{c}
	      X    \\
	      \xi
	    \end{array}
	  \right) = \left(
	    \begin{array}{c}
	      HX + \sigma \xi  \\
	      \tau X + K\xi
	    \end{array}
	  \right) = (HX + \sigma \xi) + (\tau X + K\xi).
	\end{equation*}

To end this section, we present three \emph{canonical structures} that emerge on $\TM$ without the need of adding any additional structure to $M$. These structures are the \emph{natural generalized metric} $\G_0$, the \emph{natural generalized symplectic structure} $\Omega_0$ and the \emph{natural generalized almost paracomplex structure} $\F_0$. They are defined as follows.

\begin{definition}[{\cite[Sec. 1]{GUALTIERI2011}}]
\label{DEFINITION:CANONICALMETRIC}
The \emph{natural generalized metric} over $\TM$ is defined as $\G_0\colon \GTM\times \GTM\to \fm$ such that for each $X, Y\in \xm$ and $\xi, \eta\in \dm$,
    \begin{equation}
	  \mathcal G_0(X + \xi, Y + \eta) := \frac{1}{2}(\xi(Y) + \eta(X)).
	  \label{EQUATION:CANONICALMETRIC}
	\end{equation}
\end{definition}

\begin{definition}[{\cite[Sec. 2]{NANNICINI2006}}]
The \emph{natural generalized symplectic structure} over $\TM$ is defined as the structure $\Omega_0\colon \GTM\times \GTM\to \fm$ such that for each $X, Y\in \xm$ and $\xi, \eta\in \dm$,
    \begin{equation}
	  \Omega_0(X + \xi, Y + \eta) := \frac{1}{2}(\xi(Y) - \eta(X)).
	  \label{EQUATION:CANONICALSYMPLECTICSTRUCTURE}
	\end{equation}
\end{definition}

\begin{definition}[{\cite[Ex. 1]{WADE2004}}]
\label{DEFINITION:CANONICALALMOSTPARACOMPLEXSTRUCTURE}
The \emph{natural generalized almost paracomplex structure} is defined as the polynomial structure $\F_0\colon \GTM\to \GTM$ such that $\F_0(X + \xi) := -X + \xi$ for all $X\in \xm$ and $\xi\in \dm$. In matrix notation,
	\begin{equation}
	  \F_0 =
	  \left(
	    \begin{array}{cc}
		  -Id  & 0   \\
		  0    & Id
		\end{array}
	  \right).
	  \label{EQUATION:CANONICALALMOSTPARACOMPLEXSTRUCTURE}
	\end{equation}
\end{definition}

\section{Generalized metrics}
\label{SECTION:GENERALIZEDMETRICS}

Before studying the different $(\al, \var)$-metric structures that can be constructed on $\TM$, it is worthwhile to analyse the different generalized metrics that can be defined. The generalized tangent bundle has a rich structure; in particular, the natural generalized metric $\G_0$ defined in Eq. \eqref{EQUATION:CANONICALMETRIC} arises without adding any additional structure to the manifold $M$. Therefore, it is interesting to ask if any generalized metric is related to $\G_0$. The following proposition, based on an observation from \cite[Sec. 6.2]{GUALTIERI2004}, shows the answer to that question.

\begin{proposition}
\label{PROPOSITION:CHARACTERIZATIONGENERALIZEDMETRICS}
Any generalized metric $\G\colon \GTM\times \GTM\to \fm$ can be obtained from the natural generalized metric $\G_0$ and an injective endomorphism $\K\colon \GTM\to \GTM$, with
	\begin{equation}
	  \G(X + \xi, Y + \eta) = \G_0(\K(X + \xi), Y + \eta),
	  \label{EQUATION:CHARACTERIZATIONGENERALIZEDMETRICS}
	\end{equation}
for every $X + \xi, Y + \eta\in \GTM$. In matrix notation, $\K$ has the form
	\begin{equation*}
	  \K =
	  \left(
	    \begin{array}{cc}
		  H        & \sigma  \\
		  \tau  & H^*
		\end{array}
	  \right),
	\end{equation*}
such that $(\tau X)(Y) = (\tau Y)(X)$ and $\xi(\sigma\eta) = \eta(\sigma\xi)$ for all $X, Y\in \xm$ and $\xi, \eta\in \dm$.

Reciprocally, each injective endomorphism $\K$ satisfying the above mentioned conditions induces a generalized metric $\G$ defined as in Eq. \eqref{EQUATION:CHARACTERIZATIONGENERALIZEDMETRICS}.
\end{proposition}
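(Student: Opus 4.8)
The plan is to push everything through the nondegeneracy of $\G_0$. Since $\G_0$ is nondegenerate, its flat map $\flat_{\G_0}\colon \GTM\to \Gamma((\TM)^*)$ is a fibrewise isomorphism, hence an isomorphism of $\fm$-modules, so the sharp map $\sharp_{\G_0}$ exists. Given the generalized metric $\G$, I would define the endomorphism $\K := \sharp_{\G_0}\circ \flat_{\G}\colon \GTM\to \GTM$. Because $\sharp_{\G_0}$ inverts $\flat_{\G_0}$, one has $\flat_{\G_0}(\K u) = \flat_{\G}(u)$ for every $u\in \GTM$, and evaluating both sides on an arbitrary $v\in \GTM$ gives $\G_0(\K u, v) = \G(u, v)$, which is precisely \eqref{EQUATION:CHARACTERIZATIONGENERALIZEDMETRICS}. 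The injectivity of $\K$ is then a one-line consequence of the nondegeneracy of $\G$: if $\K u = 0$, then $\G(u, v) = \G_0(\K u, v) = 0$ for all $v$, so $u = 0$. In fact this computation shows that, once the representation \eqref{EQUATION:CHARACTERIZATIONGENERALIZEDMETRICS} holds, $\G$ is nondegenerate if and only if $\K$ is injective, which I will reuse for the converse.

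The next step is to read off the matrix constraints. Writing $\K = \left(\begin{smallmatrix} H & \sigma \\ \tau & K\end{smallmatrix}\right)$ and expanding \eqref{EQUATION:CHARACTERIZATIONGENERALIZEDMETRICS} with the definition \eqref{EQUATION:CANONICALMETRIC} of $\G_0$ yields
\[
  \G(X+\xi, Y+\eta) = \tfrac12\big[(\tau X)(Y) + (K\xi)(Y) + \eta(HX) + \eta(\sigma\xi)\big].
\]
All the content now sits in the symmetry of $\G$. Imposing $\G(X+\xi, Y+\eta) = \G(Y+\eta, X+\xi)$ and specialising the arguments separates the four blocks: taking $\xi=\eta=0$ gives $(\tau X)(Y) = (\tau Y)(X)$; taking $X=Y=0$ gives $\eta(\sigma\xi) = \xi(\sigma\eta)$; and taking $\xi=0$, $Y=0$ gives $\eta(HX) = (K\eta)(X)$ for all $X,\eta$, that is, $K = H^*$ by the definition of the dual morphism. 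The care needed here is to check that these three specialisations are not merely necessary but jointly sufficient: substituting $K=H^*$ back (so that $(K\xi)(Y)=\xi(HY)$ and $\eta(HX)=(K\eta)(X)$) reduces the symmetry identity exactly to the two symmetry conditions on $\tau$ and $\sigma$, so nothing is lost.

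For the reciprocal statement I would simply run the computation backwards. Given an injective $\K$ of the stated block form $\left(\begin{smallmatrix} H & \sigma \\ \tau & H^*\end{smallmatrix}\right)$ with $(\tau X)(Y) = (\tau Y)(X)$ and $\xi(\sigma\eta) = \eta(\sigma\xi)$, define $\G$ by \eqref{EQUATION:CHARACTERIZATIONGENERALIZEDMETRICS}. Bilinearity is immediate from the bilinearity of $\G_0$, symmetry is the sufficiency check of the previous paragraph, and nondegeneracy is the equivalence with injectivity of $\K$ noted at the end of the first paragraph; hence $\G$ is a generalized metric. The only genuine obstacle in the whole argument is the bookkeeping in the middle step — correctly disentangling the four matrix blocks so that the single symmetry requirement on $\G$ becomes exactly the symmetry of $\tau$, the symmetry of $\sigma$, and the adjointness $K = H^*$, together with the verification that these conditions suffice.
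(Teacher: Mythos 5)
Your proof is correct, but it follows a genuinely different route from the paper's, at least in the forward direction. The paper proceeds constructively: it defines the blocks directly from $\G$ by the formulas $(\tau X)(Y) = 2\G(X, Y)$, $\eta(\sigma \xi) = 2\G(\xi, \eta)$, $\xi(HX) = 2\G(X, \xi)$, puts $H^*$ in the lower-right corner by fiat, and then verifies by direct computation that the resulting $\K$ satisfies $\G(\cdot,\cdot) = \G_0(\K\cdot,\cdot)$ and is injective. You instead obtain $\K$ abstractly as $\sharp_{\G_0}\circ \flat_{\G}$, so that the identity $\G_0(\K u, v) = \G(u, v)$ holds by construction, and you then extract the block constraints (the two symmetry conditions on $\tau$ and $\sigma$, together with $K = H^*$) from the symmetry of $\G$ alone. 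Each approach buys something: yours makes existence and uniqueness of $\K$ transparent (it is forced to equal $\sharp_{\G_0}\flat_{\G}$), and in particular it genuinely proves the \emph{necessity} of the stated matrix form for any $\K$ representing $\G$ --- a point the paper handles only implicitly, since it merely exhibits one suitable $\K$; the paper's construction, on the other hand, produces explicit formulas for $H$, $\sigma$, $\tau$ in terms of $\G$, which is precisely what is used afterwards to recognise which endomorphism induces a given metric (as in the Nannicini example following the proposition). The reciprocal directions are essentially identical in the two proofs. One small point of precision: your claim that the injectivity computation alone shows ``$\G$ nondegenerate if and only if $\K$ injective'' is slightly loose, since the converse implication also requires the nondegeneracy of $\G_0$; this is harmless here because that nondegeneracy is the cornerstone of your whole argument and you do invoke it where it is needed.
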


\begin{proof}
First, from a generalized metric $\G\colon \GTM\times \GTM\to \fm$ we can get the morphisms $\tau\colon \xm\to \dm$, $\sigma\colon \dm\to \xm$, $H\colon \xm\to \xm$, defined as
	\begin{equation*}
      (\tau X)(Y) = 2\G(X, Y), \enspace\enspace\enspace \eta(\sigma \xi) = 2\G(\xi, \eta), \enspace\enspace\enspace \xi(HX) = 2\G(X, \xi),
	\end{equation*}
for every $X, Y\in \xm$ and $\xi, \eta\in \dm$. It is straightforward to see that $(\tau X)(Y) = (\tau Y)(X)$ and $\xi(\sigma\eta) = \eta(\sigma\xi)$. Then, the endomorphism $\K$ obtained for such $\tau, \sigma, H$ fulfils the wanted relation:
	\begin{align*}
	  \G_0(\K(X + \xi), Y + \eta) &= \G_0(HX + \sigma\xi + \tau X + H^*\xi, Y + \eta) = \frac{1}{2}((\tau X + H^*\xi)(Y) + \eta(HX + \sigma\xi))  \\
	  							  &= \frac{1}{2}((\tau X)(Y) + \xi(HY) + \eta(HX) + \eta(\sigma\xi)) = \G(X, Y) + \G(Y, \xi) + \G(X, \eta) + \G(\xi, \eta)  \\
	  							  &= \G(X + \xi, Y + \eta).
	\end{align*}
To see that $\K$ is injective, if $\K(X + \xi) = 0$ then $0 = \G_0(\K(X + \xi), Y + \eta) = \G(X + \xi, Y + \eta)$. Therefore, as $\G$ is a non-degenerate metric then $X + \xi = 0$ and $\K$ is injective.
	
Reciprocally, it is immediate to see that the morphism $\G$ defined as in Eq. \eqref{EQUATION:CHARACTERIZATIONGENERALIZEDMETRICS} for a suitable endomorphism $\K$ is bilinear. We check that it is symmetric:
	\begin{gather*}
	    \G(X, Y) = \G_0(HX + \tau X, Y) = \frac{1}{2}(\tau X)(Y) = \frac{1}{2}(\tau Y)(X) = \G_0(HY + \tau Y, X) = \G(Y, X),  \\
	    \G(\xi, \eta) = \G_0(\sigma\xi + H^*\xi, \eta) = \frac{1}{2}\eta(\sigma\xi) = \frac{1}{2}\xi(\sigma\eta) = \G_0(\sigma\eta + H^*\eta, \xi) = \G(\eta, \xi),  \\
	    \G(X, \xi)    = \G_0(HX + \tau X, \xi) = \frac{1}{2}\xi(HX) = \frac{1}{2}(H^*\xi)(X) = \G_0(\sigma\xi + H^*\xi, X) = \G(\xi, X).
	\end{gather*}
Finally, to check that the generalized metric $\G$ is nondegenerate we assume that there is a $X + \xi\in \GTM$ such that $\G(X + \xi, Y + \eta) = \G_0(\K(X + \xi), Y + \eta) = 0$ for all $Y + \eta\in \GTM$. Then, as $\G_0$ is nondegenerate, it must be $\K(X + \xi) = 0$. The morphism $\K$ is injective, so $X + \xi = 0$ and $\G$ is nondegenerate.
\end{proof}

An analogous result can be stated for generalized symplectic structures. The proof of this proposition is parallel to the proof of Proposition \ref{PROPOSITION:CHARACTERIZATIONGENERALIZEDMETRICS} and therefore it will be omitted.

\begin{proposition}
Any generalized symplectic structure $\ \Omega\colon \GTM\times \GTM\to \fm$ can be obtained from the natural generalized metric $\G_0$ and an injective endomorphism $\K\colon \GTM\to \GTM$, such that
	\begin{equation}
	  \Omega(X + \xi, Y + \eta) = \G_0(\K(X + \xi), Y + \eta),
	  \label{EQUATION:CHARACTERIZATIONGENERALIZEDSYMPLECTIC}
	\end{equation}
for every $X + \xi, Y + \eta\in \GTM$. In matrix notation, $\K$ has the form
	\begin{equation*}
	  \K =
	  \left(
	    \begin{array}{cc}
		  H        & \sigma  \\
		  \tau  & -H^*
		\end{array}
	  \right),
	\end{equation*}
such that $(\tau X)(Y) = -(\tau Y)(X)$ and $\xi(\sigma\eta) = -\eta(\sigma\xi)$ for all $X, Y\in \xm$ and $\xi, \eta\in \dm$.

Reciprocally, each injective endomorphism $\K$ satisfying the above mentioned conditions induces a generalized symplectic structure $\Omega$ defined as in Eq. \eqref{EQUATION:CHARACTERIZATIONGENERALIZEDSYMPLECTIC}. 
\end{proposition}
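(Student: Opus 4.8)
The plan is to mirror the proof of Proposition \ref{PROPOSITION:CHARACTERIZATIONGENERALIZEDMETRICS}, tracking the sign changes that arise when a symmetric form is replaced by a skew-symmetric one. For the direct implication, starting from a generalized symplectic structure $\Omega$, I would define the three building blocks $\tau\colon \xm \to \dm$, $\sigma\colon \dm \to \xm$ and $H\colon \xm \to \xm$ by
\[
(\tau X)(Y) = 2\Omega(X, Y), \qquad \eta(\sigma\xi) = 2\Omega(\xi, \eta), \qquad \xi(HX) = 2\Omega(X, \xi),
\]
for all $X, Y \in \xm$ and $\xi, \eta \in \dm$. The skew-symmetry of $\Omega$ then translates immediately into the asymmetry relations $(\tau X)(Y) = -(\tau Y)(X)$ and $\xi(\sigma\eta) = -\eta(\sigma\xi)$ demanded in the statement.

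The crucial point — and the only genuine departure from the metric case — is the lower-right block of $\K$. Expanding $\G_0(\K(X+\xi), Y+\eta)$ with an undetermined block $B$ in the fourth position yields a mixed term $(B\xi)(Y)$ that must reproduce $2\Omega(\xi, Y)$. Since $\xi(HX) = 2\Omega(X, \xi)$ and $\Omega$ is skew, we get $2\Omega(\xi, Y) = -2\Omega(Y, \xi) = -\xi(HY) = -(H^*\xi)(Y)$, which forces $B = -H^*$. With this choice the four summands of $\G_0(\K(X+\xi), Y+\eta)$ match $\Omega(X+\xi, Y+\eta)$ term by term. Injectivity of $\K$ follows exactly as in the metric case: if $\K(X+\xi)=0$, then $\Omega(X+\xi, Y+\eta) = \G_0(\K(X+\xi), Y+\eta) = 0$ for every $Y+\eta$, and nondegeneracy of $\Omega$ gives $X+\xi = 0$.

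For the converse, given an injective $\K$ with block $-H^*$ satisfying the two asymmetry conditions, I would define $\Omega$ by Eq. \eqref{EQUATION:CHARACTERIZATIONGENERALIZEDSYMPLECTIC}; bilinearity is immediate. Skew-symmetry is verified on the three types of arguments: the purely tangent and purely cotangent cases follow from $(\tau X)(Y) = -(\tau Y)(X)$ and $\xi(\sigma\eta) = -\eta(\sigma\xi)$ respectively, while the mixed case is precisely where the sign of the block is decisive, since $\Omega(X, \eta) = \tfrac12\eta(HX)$ whereas $\Omega(\eta, X) = \tfrac12(-H^*\eta)(X) = -\tfrac12\eta(HX)$. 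Finally, nondegeneracy of $\Omega$ is inherited from the injectivity of $\K$ and the nondegeneracy of $\G_0$, just as before.

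I do not expect any substantial obstacle: the argument is structurally identical to that of Proposition \ref{PROPOSITION:CHARACTERIZATIONGENERALIZEDMETRICS}, and the entire content lies in careful sign bookkeeping, where each symmetry used there becomes an antisymmetry and the sole structural consequence is the replacement of $H^*$ by $-H^*$ in $\K$.
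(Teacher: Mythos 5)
Your proposal is correct and is exactly the argument the paper has in mind: the paper omits this proof, stating it is parallel to that of Proposition \ref{PROPOSITION:CHARACTERIZATIONGENERALIZEDMETRICS}, and you carry out precisely that parallel argument, with the sign bookkeeping (forcing the lower-right block $-H^*$ and the antisymmetry of $\tau$ and $\sigma$) handled correctly in both directions.
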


There are some endomorphisms $\K\colon \GTM\to \GTM$ inducing generalized metrics and generalized symplectic structures that are especially interesting.

\begin{example}
When we have an injective endomorphism $H\colon \xm\to \xm$, the endomorphism
	\begin{equation}
	  \label{EQUATION:DIAGONALEXAMPLEMETRIC}
	  \K_{\lambda, H} =
	  \left(
	    \begin{array}{cc}
		  H  & 0  \\
		  0  & \lambda H^*
		\end{array}
	  \right),
	\end{equation}
induces a generalized metric when $\lambda = +1$ and a generalized symplectic structure when $\lambda = -1$. This happens, for example, when $H$ is an almost complex or an almost product structure on the manifold.
\end{example}

\begin{example}
\label{EXAMPLE:ANTIDIAGONALEXAMPLEMETRIC}
When we have a metric $g$ or an almost symplectic structure $\omega$ defined on $M$, their musical isomorphisms can be used to induce a generalized metric and a generalized symplectic structure via the following injective endomorphisms, respectively:
	\begin{equation}
	  \label{EQUATION:ANTIDIAGONALEXAMPLEMETRIC}
	  \F_g =
	  \left(
	    \begin{array}{cc}
		  0        & \sharp_g  \\
		  \flat_g  & 0
		\end{array}
	  \right), \enspace\enspace\enspace
	  \F_{\omega} =
	  \left(
	    \begin{array}{cc}
		  0               & \sharp_{\omega}  \\
		  \flat_{\omega}  & 0
		\end{array}
	  \right).
	\end{equation}
\end{example}

\begin{example}
In \cite[Sec. 3]{NANNICINI2019}, A. Nannicini induces a generalized metric from a Norden manifold $(M, J, g)$. This generalized metric $\G\colon \GTM\times \GTM\to \fm$ is defined as follows:
	\begin{equation}
	  \label{EQUATION:NANNICINIEXAMPLEMETRIC}
	  \G(X + \xi, Y + \eta) = g(X, Y) + \frac{1}{2}g(JX, \sharp_g\eta) + \frac{1}{2}g(\sharp_g\xi, JY) + g(\sharp_g\xi, \sharp_g\eta).
	\end{equation}
If we analyse this metric in the sense of Proposition \ref{PROPOSITION:CHARACTERIZATIONGENERALIZEDMETRICS}, it can be checked that it is induced by the endomorphism
	\begin{equation*}
	  \K = \K_{+1, J} + 2\F_g =
	  \left(
	    \begin{array}{cc}
		  J         & 2\sharp_g  \\
		  2\flat_g  & J^*
		\end{array}
	  \right),
	\end{equation*}
using the notation from Eqs. (\ref{EQUATION:DIAGONALEXAMPLEMETRIC}, \ref{EQUATION:ANTIDIAGONALEXAMPLEMETRIC}) for the almost complex structure $J$ and the metric $g$. In effect,
	\begin{align*}
	  \G_0(\K(X + \xi), Y + \eta) &= \G_0(JX + 2\sharp_g \xi + 2\flat_g X + J^*\xi, Y + \eta) = \frac{1}{2}((2\flat_g X + J^*\xi)(Y) + \eta(JX + 2\sharp_g \xi))  \\
	                              &= (\flat_g X)(Y) + \frac{1}{2}\xi(JY) + \frac{1}{2}\eta(JX) + \eta(\sharp_g\xi)  \\
	                              &= g(X, Y) + \frac{1}{2}g(JX, \sharp_g\eta) + \frac{1}{2}g(\sharp_g\xi, JY) + g(\sharp_g\xi, \sharp_g\eta).
	\end{align*}
\end{example}

Example \ref{EXAMPLE:ANTIDIAGONALEXAMPLEMETRIC} gives the most straightforward way to induce a generalized metric using a metric $g$ defined on a manifold, and a generalized symplectic structure from an almost symplectic structure $\omega$ on a manifold. Therefore, we present the following definition and properties.

\begin{definition}
\label{DEFINITION:INDUCEDGENERALIZEDMETRIC}
Let $(M, g)$ be a Riemannian or pseudo-Riemannian manifold. Then, we define the \emph{generalized metric induced by $g$}, $\G_g\colon \GTM\times \GTM\to \fm$, as
	\begin{equation}
	  \G_g(X + \xi, Y + \eta) := \G_0(2\F_g(X + \xi), Y + \eta) = g(X, Y) + g(\sharp_g \xi, \sharp_g \eta).
	  \label{EQUATION:INDUCEDGENERALIZEDMETRIC}
	\end{equation}
\end{definition}

This generalized metric agrees with the definition of a metric on a dual vector bundle $E^*\to M$ and on a Whitney sum $E\oplus F\to M$ given in \cite[Defs. 3.6, 3.9]{POOR1981}. Also, this metric has been used in some previous works such as \cite{HULLLINDSTROM2020}. 

There are some easy properties to check. For example, it is immediate to see that $TM$ and $T^*M$ are orthogonal distributions of $\TM$ with the metric $\G_g$. With respect to the signature of $\G_g$, we make the following affirmation.

\begin{proposition}
If $(M, g)$ is a pseudo-Riemannian manifold with signature $(r, s)$, then $\G_g$ is a generalized pseudo-Riemannian metric with signature $(2r, 2s)$. In particular, $\G_g$ is a generalized Riemannian metric if $(M, g)$ is a Riemannian manifold, and $\G_g$ is a generalized pseudo-Riemannian metric with neutral signature if $(M, g)$ is a pseudo-Riemannian manifold with neutral signature.
\end{proposition}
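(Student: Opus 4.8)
The plan is to reduce the statement to a pointwise linear-algebra computation and exploit the orthogonal decomposition $\TpM = T_pM\oplus T_p^*M$ already noted just before the proposition. Since the signature of a pseudo-Riemannian metric is defined fibrewise, I fix a point $p\in M$ and work in the vector space $\TpM$ equipped with the symmetric bilinear form induced by $\G_g$. Because $T_pM$ and $T_p^*M$ are $\G_g$-orthogonal, a $\G_g$-orthonormal basis of $\TpM$ may be assembled from a basis of $T_pM$ and a basis of $T_p^*M$ separately, so the signature of $\G_g$ at $p$ is the \emph{sum} of the signatures of the two restrictions. The restriction to $T_pM$ is the easy one: setting $\xi=\eta=0$ in Eq. \eqref{EQUATION:INDUCEDGENERALIZEDMETRIC} gives $\G_g(X,Y)=g(X,Y)$, so this restriction is exactly $g_p$ and contributes signature $(r,s)$.

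Next I would treat the restriction to $T_p^*M$. Setting $X=Y=0$ in Eq. \eqref{EQUATION:INDUCEDGENERALIZEDMETRIC} gives $\G_g(\xi,\eta)=g(\sharp_g\xi,\sharp_g\eta)$. The key observation is that $\sharp_g\colon T_p^*M\to T_pM$ is a linear isomorphism, and this formula says precisely that it is an isometry from $(T_p^*M,\ \G_g|_{T_p^*M})$ onto $(T_pM,g_p)$; since an isometry preserves signature, the restriction of $\G_g$ to $T_p^*M$ again has signature $(r,s)$. Concretely, if $\{e_1,\dots,e_n\}$ (with $n=\dim M$) is a $g_p$-orthonormal basis with $g_p(e_i,e_i)=\var_i\in\{+1,-1\}$, and $\{e^1,\dots,e^n\}$ denotes its dual basis, then a short computation using $g^{ij}=\var_i\delta^{ij}$ yields $\sharp_g e^i=\var_i e_i$, whence $\G_g(e^i,e^j)=\var_i\delta^{ij}$, exhibiting the very same distribution of signs.

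Finally, combining the two orthogonal blocks, the union $\{e_1,\dots,e_n,e^1,\dots,e^n\}$ is a $\G_g$-orthonormal basis of $\TpM$ with exactly $2r$ vectors of positive square-norm and $2s$ of negative square-norm, so $\G_g$ has signature $(2r,2s)$ at $p$; as $p$ is arbitrary this is the signature of $\G_g$, and the two special cases ($s=0$ giving a generalized Riemannian metric, and $r=s$ giving the neutral case) follow immediately. The only step that demands any care is the claim that the restriction to $T_p^*M$ retains the signature $(r,s)$ rather than acquiring a different sign pattern; this is exactly what the isometry property of $\sharp_g$ guarantees (equivalently, what the explicit dual-basis computation $\sharp_g e^i=\var_i e_i$ makes visible). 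Everything else is routine bookkeeping with the orthogonal splitting.
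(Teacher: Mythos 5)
Your proof is correct and takes essentially the same approach as the paper: fix a point $p$, use the $\G_g$-orthogonality of $T_pM$ and $T_p^*M$, note that the restriction to $T_pM$ is $g_p$, and transfer the signature to $T_p^*M$ through the musical isomorphism --- the paper's basis $\{v_i,\flat_g v_i\}$ coincides, up to signs, with your basis $\{e_i,e^i\}$. If anything, your explicit insistence on an orthonormal (hence $g$-orthogonal) basis makes the diagonalization step slightly more careful than the paper's wording, but the underlying argument is identical.
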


\begin{proof}
We fix a $p\in M$ and work with the spaces $T_pM$, $T_p^*M$, $\mathbb T_pM$. We take a basis $\{v_1, \dots, v_{r+s}\}\subset T_pM$ such that $g(v_i, v_i) > 0$ for $i = 1, \dots, r$ and $g(v_j, v_j) < 0$ for $j = r+1, \dots, r+s$. As we can use the $\flat_g$ isomorphism to obtain a basis of $T_p^*M$, we compute $\G_g(v_i, v_i)$ and $\G_g(\flat_g v_i, \flat_g v_i)$: if $i\in \{1, \dots, r\}$, then
	\begin{equation*}
	  \G_g(v_i, v_i) = g(v_i, v_i) > 0, \enspace\enspace\enspace \G_g(\flat_g v_i, \flat_g v_i) = g(\sharp_g\flat_g v_i, \sharp_g\flat_g v_i) = g(v_i, v_i) > 0,
	\end{equation*}
whilst for $j\in \{r+1, \dots, r+s\}$ we have
	\begin{equation*}
	  \G_g(v_j, v_j) = g(v_j, v_j) < 0, \enspace\enspace\enspace \G_g(\flat_g v_j, \flat_g v_j) = g(\sharp_g\flat_g v_j, \sharp_g\flat_g v_j) = g(v_j, v_j) < 0.
	\end{equation*}
Therefore, as $\{v_1, \flat_g v_1, \dots, v_{r+s}, \flat_g v_{r+s}\}$ is a basis of $\mathbb T_pM$, the signature of the generalized metric $\G_g$ is $(2r, 2s)$.
\end{proof}

\section{Generalized $(\al, \var)$-metric structures}
\label{SECTION:GENERALIZEDALVARMETRICSTRUCTURES}

In \cite{ETAYOGOMEZNICOLASSANTAMARIA2022}, we studied various generalized polynomial structures induced by different geometric structures on the base manifold. In this section, we study the compatibility of those generalized polynomial structures with different generalized metrics. With those endomorphisms, we construct $(\al, \var)$-metric structures on $\TM$, as it was done in Section \ref{SECTION:PRELIMINARIES} for any vector bundle $E\to M$.

\subsection{Natural generalized $(\al, \var)$-metric structures}

First, we use the natural generalized metric $\G_0$ from Definition \ref{DEFINITION:CANONICALMETRIC} and the generalized almost paracomplex structure $\F_0$ from Definition \ref{DEFINITION:CANONICALALMOSTPARACOMPLEXSTRUCTURE} in order to find generalized $(\al, \var)$-metric structures. If any of these structures appears in the definition of the $(\al, \var)$-metric structure, we use the following terminology.

\begin{definition}
A generalized $(\al, \var)$-metric structure is called \emph{natural} if any of the canonical structures is involved directly in its definition, that is, if the generalized metric considered is $\G_0$ or the polynomial structure is $\F_0$.
\end{definition}

First, we analyse the possible interactions between the natural generalized almost paracomplex structure $\F_0$ and any generalized metric $\G$. In this situation, the following result can be inferred.

\begin{proposition}
Let $\ \G$ be any generalized metric. If $\ \G$ is a Riemannian metric, then $(\F_0, \G)$ is a natural generalized almost para-Norden structure if and only if $\ \G(X, \xi) = 0$ for all $X\in \xm$, $\xi\in \dm$. On the other hand, if $\G$ is a pseudo-Riemannian metric, then $(\F_0, \G)$ is a natural generalized pseudo-Riemannian almost product structure if and only if $\ \G(X, \xi) = 0$ for all $X\in \xm$, $\xi\in \dm$, while $(\F_0, \G)$ is an almost para-Hermitian structure if and only if $\ \G(X, Y) = \G(\xi, \eta) = 0$ for every $X, Y\in \xm$ and $\xi, \eta\in \dm$.
\end{proposition}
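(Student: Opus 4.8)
The plan is to use the fact that $\F_0$ is a polynomial structure with $\F_0^2 = Id$, so that in any $(\al, \var)$-metric structure $(\F_0, \G)$ we necessarily have $\al = +1$; this already discards the Hermitian and Norden cases and leaves only $\var = +1$ (para-Norden or pseudo-Riemannian almost product, according to whether $\G$ is Riemannian or pseudo-Riemannian) and $\var = -1$ (para-Hermitian). I would also record that $\F_0$ is genuinely almost paracomplex: its $+1$-eigenbundle is $\dm$ and its $-1$-eigenbundle is $\xm$, both of rank $n = \dim M$, so the para-Norden terminology applies whenever $\var = +1$ and $\G$ is Riemannian. Note that every structure built from $\F_0$ is automatically \emph{natural} in the sense of the preceding definition, so that qualifier requires no verification.

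The core of the argument is a single bilinear computation. Writing $\F_0(X + \xi) = -X + \xi$ and expanding by bilinearity of $\G$,
\begin{equation*}
  \G(\F_0(X + \xi), \F_0(Y + \eta)) = \G(X, Y) - \G(X, \eta) - \G(\xi, Y) + \G(\xi, \eta),
\end{equation*}
whereas $\var\,\G(X + \xi, Y + \eta) = \var(\G(X, Y) + \G(X, \eta) + \G(\xi, Y) + \G(\xi, \eta))$. I would then impose the compatibility condition $\G(\F_0(X + \xi), \F_0(Y + \eta)) = \var\,\G(X + \xi, Y + \eta)$ and separate the two values of $\var$. For $\var = +1$ the equation collapses to $\G(X, \eta) + \G(\xi, Y) = 0$ for all arguments, and for $\var = -1$ it collapses to $\G(X, Y) + \G(\xi, \eta) = 0$ for all arguments.

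The final step is to decouple these identities. In the $\var = +1$ case, setting $\xi = 0$ gives $\G(X, \eta) = 0$ and setting $\eta = 0$ gives $\G(\xi, Y) = 0$, so the condition is equivalent to the single requirement $\G(X, \xi) = 0$ for all $X \in \xm$, $\xi \in \dm$; conversely this requirement clearly forces the mixed terms to vanish. Combined with the remark on $\al$ and the paracomplexity of $\F_0$, this proves the para-Norden statement (when $\G$ is Riemannian) and the pseudo-Riemannian almost product statement (when $\G$ is pseudo-Riemannian). In the $\var = -1$ case, the same argument, now setting the pairs $X = Y = 0$ and $\xi = \eta = 0$, shows equivalence with $\G(X, Y) = 0$ and $\G(\xi, \eta) = 0$ for all $X, Y \in \xm$ and $\xi, \eta \in \dm$, which is exactly the almost para-Hermitian case. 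I do not anticipate any real obstacle here: the only point demanding a little care is the decoupling of the bilinear conditions, which works precisely because the vector-field and $1$-form arguments vary independently; the identification of each case with the correct name in the $(\al, \var)$-metric classification is then immediate from the definitions.
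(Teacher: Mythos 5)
Your proof is correct and follows essentially the same route as the paper: a single bilinear expansion of $\G(\F_0(X+\xi), \F_0(Y+\eta))$, followed by specialization to pure vector-field and $1$-form arguments to decouple the conditions for $\var = +1$ and $\var = -1$. The only cosmetic difference is that you phrase the reverse implication as setting arguments to zero in the summed identity, while the paper plugs pure arguments directly into the compatibility condition; these are the same computation.
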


\begin{proof}
A straightforward calculation shows that
	\begin{align*}
	  \G(\F_0(X + \xi), \F_0(Y + \eta)) &= \G(-X + \xi, -Y + \eta) = \G(-X, -Y) + \G(-X, \eta) + \G(\xi, -Y) + \G(\xi, \eta)  \\
	  									&= (\G(X, Y) + \G(\xi, \eta)) - (\G(X, \eta) + \G(Y, \xi)).
	\end{align*}
Therefore, if $\G(X, \xi) = 0$ the structure $(\F_0, \G)$ is generalized almost para-Norden or pseudo-Riemannian almost product, and if $\G(X, Y) = \G(\xi, \eta) = 0$ the structure $(\F_0, \G)$ is generalized almost para-Hermitian. To prove the reverse, if $(\F_0, \G)$ is a generalized $(\al, \var)$-metric structure, then
	\begin{gather*}
	  \var \G(X, \xi) = \G(\F_0 X, \F_0 \xi) = -\G(X, \xi),  \\
	  \var \G(X, Y) = \G(\F_0 X, \F_0 Y) = \G(X, Y),  \\
	  \var \G(\xi, \eta) = \G(\F_0 \xi, \F_0 \eta) = \G(\xi, \eta),
	\end{gather*}
and the result is proven.
\end{proof}

As the natural generalized metric meets the equality $\G_0(X, Y) = \G_0(\xi, \eta) = 0$, we obtain the following corollary that links the three canonical generalized structures on $\TM$.

\begin{corollary}
The pair $(\F_0, \G_0)$ is a natural generalized almost para-Hermitian structure with the canonical structure $\Omega_0$ as fundamental symplectic structure. In other words, the three canonical generalized structures from Eqs. \emph{(\ref{EQUATION:CANONICALMETRIC}, \ref{EQUATION:CANONICALSYMPLECTICSTRUCTURE}, \ref{EQUATION:CANONICALALMOSTPARACOMPLEXSTRUCTURE})} are related between them with the expression
	\begin{equation*}
	  \Omega_0(X + \xi, Y + \eta) = \G_0(\F_0(X + \xi), Y + \eta).
	\end{equation*}
\end{corollary}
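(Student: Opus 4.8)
The plan is to read this off directly from the preceding proposition together with a one-line computation of the fundamental tensor. That proposition tells us that, for a pseudo-Riemannian generalized metric $\G$, the pair $(\F_0, \G)$ is almost para-Hermitian precisely when $\G(X, Y) = \G(\xi, \eta) = 0$ for all $X, Y\in \xm$ and $\xi, \eta\in \dm$. First I would verify that $\G_0$ satisfies these vanishing conditions. Each term of $\G_0(X + \xi, Y + \eta) = \frac{1}{2}(\xi(Y) + \eta(X))$ pairs one vector argument with one covector argument, so setting $\xi = \eta = 0$ (resp. $X = Y = 0$) gives $\G_0(X, Y) = 0$ and $\G_0(\xi, \eta) = 0$ immediately. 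Hence the proposition applies and $(\F_0, \G_0)$ is a natural generalized almost para-Hermitian structure, with $\al = +1$ because $\F_0^2 = Id$ and $\var = -1$ as dictated by the para-Hermitian case.

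It then remains to identify the fundamental symplectic structure of $(\F_0, \G_0)$ with $\Omega_0$. By definition the fundamental tensor $\phi$ is $\phi(X + \xi, Y + \eta) = \G_0(\F_0(X + \xi), Y + \eta)$, and since $\al\var = (+1)(-1) = -1$ the earlier discussion guarantees that $\phi$ is a symplectic structure rather than a metric. I would then evaluate $\phi$ explicitly: using $\F_0(X + \xi) = -X + \xi$ and the formula for $\G_0$, one finds $\G_0(-X + \xi, Y + \eta) = \frac{1}{2}(\xi(Y) - \eta(X))$, which is exactly $\Omega_0(X + \xi, Y + \eta)$. This establishes both the displayed equation and the claim that $\Omega_0$ is the fundamental symplectic structure of the pair.

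There is essentially no obstacle here: every step is a direct evaluation of the canonical formulas and an appeal to results already proved. The only point worth stating carefully is that $\G_0$ is genuinely pseudo-Riemannian (indeed of neutral signature), since $\G_0(X + \xi, X + \xi) = \xi(X)$ takes both signs; this is what permits the para-Hermitian branch of the preceding proposition to be invoked.
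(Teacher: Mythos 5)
Your proposal is correct and follows the same route as the paper: the paper obtains this corollary precisely by observing that $\G_0(X, Y) = \G_0(\xi, \eta) = 0$ and invoking the preceding proposition, with the identity $\G_0(\F_0(X + \xi), Y + \eta) = \frac{1}{2}(\xi(Y) - \eta(X)) = \Omega_0(X + \xi, Y + \eta)$ verified by direct evaluation exactly as you do. Your additional remarks (that $\al\var = -1$ makes the fundamental tensor symplectic, and that $\G_0$ is genuinely of neutral signature) are accurate and only make explicit what the paper leaves implicit.
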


Now we analyse the natural generalized $(\al, \var)$-metric structures involving the natural generalized metric $\G_0$. As $\G_0$ is a pseudo-Riemannian metric with neutral signature, there are four possible kinds of natural structures involving $\ \G_0$: generalized pseudo-Riemannian almost product structures, generalized almost para-Hermitian structures, generalized indefinite almost Hermitian structures and generalized almost Norden structures.

Originally, generalized almost complex structures are defined in \cite{HITCHIN2003, GUALTIERI2004} as endomorphisms $\J\colon \GTM\to \GTM$ that are isometric with respect to $\G_0$ such that $\J^2 = - \mathcal Id$. Therefore, the original definition corresponds to our definition of natural generalized indefinite almost Hermitian structures. Similarly, generalized almost paracomplex structures are defined in \cite{WADE2004} as endomorphisms $\F\colon \GTM\to \GTM$ that are anti-isometric with respect to $\G_0$ such that $\F^2 = + \mathcal Id$. Therefore, the original definition corresponds to our definition of natural generalized almost para-Hermitian structures. Most of these structures have been widely studied; for example, in \cite{IDAMANEA2017} both generalized almost para-Hermitian and almost para-Norden structures are analysed.

Isometric generalized almost complex structures with respect to $\G_0$ are particularly important. For example, we present now a result that relates the existence of natural generalized indefinite almost Hermitian structures with the existence of almost complex structures on the manifold $M$.

\begin{proposition}[{\cite[Sec. 6.4]{GUALTIERI2004}}]
Let $M$ be a manifold. Then, there exists a natural generalized indefinite almost Hermitian structure $(\J, \G_0)$ if and only if there exists an almost complex structure $J$ on $M$.
\end{proposition}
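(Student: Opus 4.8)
The plan is to establish the two implications separately. The implication ``almost complex structure on $M$ $\Rightarrow$ natural generalized indefinite almost Hermitian structure'' I would prove by an explicit construction, while the converse---which carries the genuine content and follows the observation cited from \cite[Sec. 6.4]{GUALTIERI2004}---requires a reduction-of-structure-group argument.

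For the forward direction, given an almost complex structure $J$ on $M$ I would propose the candidate
\[
  \J = \begin{pmatrix} J & 0 \\ 0 & -J^* \end{pmatrix},
\]
that is $\J(X + \xi) = JX - J^*\xi$, and verify the two defining conditions. The identity $\J^2 = -\mathcal{Id}$ is immediate from $J^2 = -Id$ together with $(J^*)^2 = (J^2)^* = -Id$, since the off-diagonal blocks vanish. For the isometry with respect to $\G_0$ I would expand $\G_0(\J(X + \xi), \J(Y + \eta))$ using Eq. \eqref{EQUATION:CANONICALMETRIC} and collapse it with the dual-structure identity $(J^*\xi)(JY) = \xi(J^2 Y) = -\xi(Y)$, obtaining $\tfrac{1}{2}(\xi(Y) + \eta(X)) = \G_0(X + \xi, Y + \eta)$; here the two minus signs (one from $-J^*$, one from $J^2 = -Id$) cancel. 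As $\J^2 = -\mathcal{Id}$ (so $\al = -1$), $\J$ is $\G_0$-isometric (so $\var = +1$), and $\G_0$ is pseudo-Riemannian, the pair $(\J, \G_0)$ is exactly a natural generalized indefinite almost Hermitian structure.

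For the converse I cannot simply read $J$ off a block of $\J$, since the blocks $H, \sigma, \tau, K$ are constrained only by $\J^2 = -\mathcal{Id}$ and by skew-symmetry with respect to $\G_0$ (equivalently $\G_0(\J\,\cdot\,, \cdot) = -\G_0(\cdot\,, \J\,\cdot\,)$, using $\al\var = -1$). The plan is instead to choose a positive-definite generalized metric in the sense of \cite[Def. 1.9]{GUALTIERI2014}, i.e.\ a splitting $\TM = C_+ \oplus C_-$ into $\G_0$-orthogonal subbundles on which $\G_0$ is positive- resp.\ negative-definite, arranged so that $\J$ preserves the splitting. Since $T^*M$ is $\G_0$-isotropic and $C_+$ is $\G_0$-positive-definite of rank $\dim M$, the two are complementary, so the canonical projection $\pi\colon \TM \to TM$ restricts to an isomorphism $C_+ \xrightarrow{\ \sim\ } TM$. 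Because $\J$ preserves $C_+$ and $\J^2 = -\mathcal{Id}$, the restriction $\J|_{C_+}$ squares to $-Id$, and transporting it through $\pi$ yields an almost complex structure $J$ on $M$.

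The main obstacle is this converse. The conditions $\J^2 = -\mathcal{Id}$ and $\G_0$-orthogonality already force $\dim M$ to be even, say $\dim M = 2n$ (the neutral signature $(2n,2n)$ of $\G_0$ admits a compatible complex structure only when the associated Hermitian form has balanced signature $(n,n)$); but even-dimensionality alone does not produce an almost complex structure, so the real work is to build $J$ globally. The crux is the existence of a $\J$-invariant positive-definite generalized metric: as for ordinary almost complex structures, I expect the space of $\J$-compatible metrics to be nonempty and contractible, so the choice can be made over all of $M$. A more conceptual alternative, which I could use instead, is to argue with structure groups: such a $(\J, \G_0)$ reduces the $O(2n,2n)$-structure of $\TM$ to $U(n,n)$, and since the frame bundle of $\TM = \TTM$ retracts onto the diagonally embedded $O(2n)$, this reduction is homotopically the same datum as a reduction of the $O(2n)$-structure of $TM$ to $U(n)$, i.e.\ an almost complex structure on $M$.
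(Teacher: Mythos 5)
Your proposal is correct and takes essentially the same route as the paper: the forward direction is the paper's diagonal structure $\J_{-1, J}$ from Eq.~\eqref{EQUATION:COMPLEXLAMBDAJ} with $\lambda = -1$, and your converse---choosing a $\J$-stable positive-definite subbundle $C_+$, observing that positivity forces $C_+\cap T^*M = 0$ so that $\pi|_{C_+}\colon C_+\to TM$ is an isomorphism, and transporting $\J|_{C_+}$ through $\pi$---is exactly the paper's construction with $L_+ = C_+$. The only differences are cosmetic: you make explicit why the projection is automatically an isomorphism, and you flag the existence of the $\J$-invariant positive-definite subbundle as the crux (offering the contractibility/structure-group justification), a point the paper, deferring to Gualtieri, likewise leaves unproven.
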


Starting from a natural generalized indefinite almost Hermitian structure $(\J, \G_0)$, the idea of the proof is to construct a positive definite subbundle $L_+\subset \TM$ that is stable with respect to $\J$. This subbundle can be constructed in such a way that the projection $\pi\colon \TM\to TM$ defined as $\pi(X + \xi) = X$ is an isomorphism between $L_+$ and $TM$. Then, we can define the almost complex structure $J = \pi|_{L_+}\circ \J\circ \pi|_{L_+}^{-1}$ on the base manifold.

We analyse now the different examples of natural generalized $(\al, \var)$-metric structures that are obtained when the base manifold is endowed with a particular geometry. Taking a Riemannian or pseudo-Riemannian manifold $(M, g)$, the musical isomorphisms of $g$ induce a generalized almost complex and a generalized almost paracomplex structure. These induced polynomial structures on $\TM$ are compatible with the natural generalized metric, as it is shown in the following proposition.

\begin{proposition}[{{\cite[Ex. 3.1]{IDAMANEA2017}}, {\cite[Sec. 2]{NANNICINI2013}}}]
\label{PROPOSITION:NATURALALVARMETRIC}
Let $(M, g)$ be a Riemannian or pseudo-Riemannian manifold. Then, the generalized polynomial structures
    \begin{gather}
      \label{EQUATION:COMPLEXG}
      \J_g =
      \left(
        \begin{array}{cc}
          0        & -\sharp_g  \\
          \flat_g  & 0
        \end{array}
      \right),  \\
      \label{EQUATION:PRODUCTG}
      \F_g =
      \left(
        \begin{array}{cc}
          0        & \sharp_g  \\
          \flat_g  & 0
        \end{array}
      \right),
    \end{gather}
generate, respectively, a natural generalized almost Norden structure $(\J_g, \G_0)$ and a natural generalized pseudo-Riemannian almost product structure $(\F_g, \G_0)$. Their twin metrics are, respectively,
	\begin{gather*}
	  \Phi_{\J_g}(X + \xi, Y + \eta) = \frac{1}{2}(g(X, Y) - g(\sharp_g \xi, \sharp_g \eta)),  \\
	  \Phi_{\F_g}(X + \xi, Y + \eta) = \frac{1}{2}(g(X, Y) + g(\sharp_g \xi, \sharp_g \eta)).
	\end{gather*}
\end{proposition}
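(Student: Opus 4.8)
The plan is to reduce everything to direct evaluation against the definition \eqref{EQUATION:CANONICALMETRIC} of $\G_0$, using two identities for the musical isomorphisms of $g$: first, $(\flat_g X)(Y) = g(X, Y)$ by definition of $\flat_g$; and second, since $\flat_g \sharp_g = \mathcal{Id}$, the relation $\eta(\sharp_g \xi) = g(\sharp_g \eta, \sharp_g \xi)$, whence in particular $\eta(X) = (\flat_g X)(\sharp_g \eta)$ after using symmetry of $g$. Reading off the matrix forms \eqref{EQUATION:COMPLEXG} and \eqref{EQUATION:PRODUCTG}, I would first record that $\J_g(X + \xi) = -\sharp_g \xi + \flat_g X$ and $\F_g(X + \xi) = \sharp_g \xi + \flat_g X$, so that each structure interchanges the vector and form parts of a section, up to a sign in the case of $\J_g$.

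First I would check the polynomial identities. Applying the displayed actions twice and using $\sharp_g \flat_g = \mathcal{Id}$ and $\flat_g \sharp_g = \mathcal{Id}$ gives $\J_g^2(X + \xi) = -X - \xi$ and $\F_g^2(X + \xi) = X + \xi$, so $\J_g^2 = -\mathcal{Id}$ and $\F_g^2 = +\mathcal{Id}$; thus $\J_g$ is a generalized almost complex structure and $\F_g$ a generalized almost product structure. To upgrade the latter to an almost paracomplex structure I would exhibit its eigenbundles as $\{X \pm \flat_g X : X \in \xm\}$, each of rank $n = \dim M$, hence of equal dimension.

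Next I would test compatibility with $\G_0$. A direct substitution into \eqref{EQUATION:CANONICALMETRIC} yields $\G_0(\J_g(X + \xi), \J_g(Y + \eta)) = \tfrac{1}{2}((\flat_g X)(-\sharp_g \eta) + (\flat_g Y)(-\sharp_g \xi)) = -\tfrac{1}{2}(\eta(X) + \xi(Y)) = -\G_0(X + \xi, Y + \eta)$, so $\var = -1$; together with $\al = -1$ and the fact (recorded earlier) that $\G_0$ is pseudo-Riemannian of neutral signature, this is precisely a generalized almost Norden structure. The same computation with the opposite inner sign gives $\G_0(\F_g(X + \xi), \F_g(Y + \eta)) = \G_0(X + \xi, Y + \eta)$, so $\var = +1$ and $(\F_g, \G_0)$ is a generalized pseudo-Riemannian almost product structure.

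Finally the twin metrics are the fundamental tensors $\Phi_{\J_g}(X + \xi, Y + \eta) = \G_0(\J_g(X + \xi), Y + \eta)$ and $\Phi_{\F_g}(X + \xi, Y + \eta) = \G_0(\F_g(X + \xi), Y + \eta)$, which are genuine metrics because $\al\var = +1$ in both cases. Substituting the displayed actions and the identity $\eta(\sharp_g \xi) = g(\sharp_g \eta, \sharp_g \xi)$ produces the stated formulas. I do not anticipate a genuine obstacle here: the only points requiring care are the bookkeeping of which summand of $\G_0$ receives the vector part and which the form part, and the correct placement of the sign coming from the $-\sharp_g$ entry of $\J_g$; the neutral-signature clause in the Norden case is not recomputed but inherited from the known signature of $\G_0$.
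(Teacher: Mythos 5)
Your proof is correct in every step: the matrix actions of $\J_g$ and $\F_g$, the verification of $\J_g^2 = -\mathcal{I}d$ and $\F_g^2 = +\mathcal{I}d$ (with equal-rank eigenbundles $\{X \pm \flat_g X : X\in \xm\}$), the anti-isometry/isometry computations giving $\var = -1$ and $\var = +1$ respectively, and the twin metric formulas via $\eta(\sharp_g\xi) = g(\sharp_g\xi, \sharp_g\eta)$. Note that the paper itself gives no proof of this proposition (it is quoted from \cite{IDAMANEA2017} and \cite{NANNICINI2013}), but your direct substitution into Eq. \eqref{EQUATION:CANONICALMETRIC} together with the musical-isomorphism identities is precisely the style of verification the paper carries out for its analogous results, such as the propositions on $\J_{J, g, \flat}$, $\F_{J, g}$ and the induced structures.
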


Similarly, when we have an almost symplectic manifold $(M, \omega)$ the musical isomorphisms $\flat_{\omega}, \sharp_{\omega}$ induce generalized polynomial structures, namely, a generalized almost complex and a generalized almost paracomplex structure. These polynomial structures form different $(\al, \var)$-metric structures on the bundle $\TM$, as it is shown in the following proposition, analogous to Proposition \ref{PROPOSITION:NATURALALVARMETRIC}.

\begin{proposition}[{{\cite[Sec. 3]{GUALTIERI2011}}, {\cite[Ex. 2]{WADE2004}}}]
\label{PROPOSITION:NATURALALVARSYMPLECTIC}
Let $(M, \omega)$ be an almost symplectic manifold. Then, the generalized polynomial structures
    \begin{gather}
      \label{EQUATION:COMPLEXOMEGA}
      \J_{\omega} =
      \left(
        \begin{array}{cc}
          0        & -\sharp_{\omega}  \\
          \flat_{\omega}  & 0
        \end{array}
      \right),  \\
      \label{EQUATION:PRODUCTOMEGA}
      \F_{\omega} =
      \left(
        \begin{array}{cc}
          0        & \sharp_{\omega}  \\
          \flat_{\omega}  & 0
        \end{array}
      \right),
    \end{gather}
form, respectively, a natural generalized indefinite almost Hermitian structure $(\J_{\omega}, \G_0)$ and a natural generalized almost para-Hermitian structure $(\F_{\omega}, \G_0)$. Their fundamental symplectic structures are, respectively,
	\begin{gather*}
	  \Phi_{\J_{\omega}}(X + \xi, Y + \eta) = \frac{1}{2}(\omega(X, Y) + \omega(\sharp_{\omega} \xi, \sharp_{\omega} \eta)),  \\
	  \Phi_{\F_{\omega}}(X + \xi, Y + \eta) = \frac{1}{2}(\omega(X, Y) - \omega(\sharp_{\omega} \xi, \sharp_{\omega} \eta)).
	\end{gather*}
\end{proposition}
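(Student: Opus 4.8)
The plan is to treat the two structures in parallel and verify each defining condition directly, exactly as in Proposition \ref{PROPOSITION:NATURALALVARMETRIC}, the difference being that the skew-symmetry of $\omega$ replaces the symmetry of $g$ and flips certain signs. First I would record the two facts about the musical isomorphisms of $\omega$ that drive every computation: since $(\flat_\omega X)(Y) = \omega(X, Y)$ and $\omega$ is skew-symmetric, one has $\sharp_\omega\flat_\omega = \flat_\omega\sharp_\omega = \mathcal{Id}$ together with the sign relations $(\flat_\omega X)(\sharp_\omega\eta) = \omega(X, \sharp_\omega\eta) = -\eta(X)$ and, dually, $\eta(\sharp_\omega\xi) = -\omega(\sharp_\omega\xi, \sharp_\omega\eta)$. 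It is precisely the minus sign here, versus the plus sign $(\flat_g X)(\sharp_g\eta) = \eta(X)$ in the metric case, that interchanges the roles of $\var = +1$ and $\var = -1$ relative to Proposition \ref{PROPOSITION:NATURALALVARMETRIC}.

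Next I would check the polynomial identities. Writing $\J_\omega(X + \xi) = \flat_\omega X - \sharp_\omega\xi$ and $\F_\omega(X + \xi) = \flat_\omega X + \sharp_\omega\xi$, a single further application of each endomorphism combined with $\sharp_\omega\flat_\omega = \flat_\omega\sharp_\omega = \mathcal{Id}$ gives $\J_\omega^2 = -\mathcal{Id}$ and $\F_\omega^2 = +\mathcal{Id}$, so $\J_\omega$ is a generalized almost complex structure ($\al = -1$) and $\F_\omega$ a generalized almost product structure ($\al = +1$).

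Then I would verify compatibility with $\G_0$. Expanding $\G_0(\J_\omega(X + \xi), \J_\omega(Y + \eta))$ and $\G_0(\F_\omega(X + \xi), \F_\omega(Y + \eta))$ with the definition \eqref{EQUATION:CANONICALMETRIC} and substituting the sign relations above, the four resulting terms collapse to $\tfrac12(\xi(Y) + \eta(X)) = \G_0(X + \xi, Y + \eta)$ in the first case and to its negative in the second. Hence $(\J_\omega, \G_0)$ has $(\al, \var) = (-1, +1)$ and $(\F_\omega, \G_0)$ has $(\al, \var) = (+1, -1)$. Since $\G_0$ is pseudo-Riemannian with neutral signature, the first pair is a natural generalized indefinite almost Hermitian structure and the second a natural generalized almost para-Hermitian structure, the neutral-signature requirement in the para-Hermitian case being met automatically by $\G_0$.

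Finally I would read off the fundamental structures. In both cases $\al\var = -1$, so by the discussion preceding Proposition \ref{PROPOSITION:EQUALITIESFLATSHARP} the fundamental tensor $\Phi(X + \xi, Y + \eta) = \G_0(\J_\omega(X + \xi), Y + \eta)$, and likewise with $\F_\omega$, is automatically a symplectic structure on $\TM$, so no separate check of skew-symmetry or nondegeneracy is needed. Expanding these two pairings with \eqref{EQUATION:CANONICALMETRIC} and the identities $(\flat_\omega X)(Y) = \omega(X, Y)$ and $\eta(\sharp_\omega\xi) = -\omega(\sharp_\omega\xi, \sharp_\omega\eta)$ yields the stated formulas for $\Phi_{\J_\omega}$ and $\Phi_{\F_\omega}$. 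The one point requiring care throughout is the bookkeeping of the signs produced by the skew-symmetry of $\omega$; once the two sign relations of the first step are established, every remaining step is a short substitution.
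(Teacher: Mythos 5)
Your proof is correct, and it follows essentially the approach the paper takes: the paper states this proposition without writing out a proof (citing Gualtieri and Wade and treating it as the analogue of Proposition \ref{PROPOSITION:NATURALALVARMETRIC}), and your computation is precisely the direct verification used in the paper's proofs of its analogous statements — establish the sign relations $\omega(X, \sharp_{\omega}\eta) = -\eta(X)$ and $\eta(\sharp_{\omega}\xi) = -\omega(\sharp_{\omega}\xi, \sharp_{\omega}\eta)$, check $\J_{\omega}^2 = -\mathcal{I}d$ and $\F_{\omega}^2 = +\mathcal{I}d$, expand $\G_0$ on the transformed sections to get $\var = +1$ and $\var = -1$ respectively, and read off the fundamental symplectic structures from $\al\var = -1$. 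All steps check out, including the correct observation that skew-symmetry of $\omega$ interchanges the roles of $\var = \pm 1$ relative to the metric case.
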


When we have a polynomial structure defined on $M$, we can induce natural generalized polynomial structures with the same minimal polynomial using diagonal matrices. The following propositions show the way to do this.

\begin{proposition}
Let $(M, J)$ be an almost complex manifold. Then, the generalized almost complex structure
    \begin{equation}
      \J_{\lambda, J} =
      \left(
        \begin{array}{cc}
          J  & 0  \\
          0  & \lambda J^*
        \end{array}
      \right),
      \label{EQUATION:COMPLEXLAMBDAJ}
    \end{equation}
with $\lambda\in \{+1, -1\}$, forms a natural generalized indefinite almost Hermitian structure $(\J_{-1, J}, \G_0)$ when $\lambda = -1$, and a natural generalized almost Norden structure $(\J_{+1, J}, \G_0)$ when $\lambda = +1$. Depending on $\lambda$, its fundamental tensor is
	\begin{equation*}
	  \Phi_{\J_{\lambda, J}}(X + \xi, Y + \eta) = \G_0(JX + \xi, \lambda JY + \eta).
	\end{equation*}
\end{proposition}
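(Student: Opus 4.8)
The plan is to verify the three claims—that $\J_{\lambda, J}$ is a generalized almost complex structure, that its compatibility type with $\G_0$ is governed by the sign of $\lambda$, and that its fundamental tensor has the stated form—by direct computation. The whole argument rests on two facts about the dual structure $J^*$ recorded earlier in the excerpt: that $J^*$ inherits the minimal polynomial of $J$, so $(J^*)^2 = -Id$ on $\dm$; and that $(J^*\xi)(Y) = \xi(JY)$ for all $X \in \xm$ and $\xi, \eta \in \dm$.

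First I would square the diagonal matrix to obtain $\J_{\lambda,J}^2 = \operatorname{diag}(J^2, \lambda^2 (J^*)^2)$, which equals $-\mathcal{Id}$ since $J^2 = (J^*)^2 = -Id$ and $\lambda^2 = 1$; this fixes $\al = -1$ in both cases. Next I would compute the compatibility with $\G_0$. Writing $\J_{\lambda,J}(X+\xi) = JX + \lambda J^*\xi$ and feeding this into the definition \eqref{EQUATION:CANONICALMETRIC} of $\G_0$, the key step is to simplify $(J^*\xi)(JY) = \xi(J^2 Y) = -\xi(Y)$; this collapses $\G_0(\J_{\lambda,J}(X+\xi), \J_{\lambda,J}(Y+\eta))$ to $-\lambda\, \G_0(X+\xi, Y+\eta)$, so that $\var = -\lambda$. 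Reading off the two cases: $\lambda = -1$ gives $(\al, \var) = (-1, +1)$, and since $\G_0$ is pseudo-Riemannian with neutral signature this is a natural generalized indefinite almost Hermitian structure; $\lambda = +1$ gives $(\al, \var) = (-1, -1)$, a natural generalized almost Norden structure.

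For the fundamental tensor I would evaluate $\Phi_{\J_{\lambda,J}}(X+\xi, Y+\eta) = \G_0(\J_{\lambda,J}(X+\xi), Y+\eta)$ directly; the definition of $\G_0$ produces $\tfrac{1}{2}(\lambda (J^*\xi)(Y) + \eta(JX))$, and rewriting $(J^*\xi)(Y) = \xi(JY)$ lets me recognise this as $\G_0(JX + \xi, \lambda JY + \eta)$, the stated expression. I expect no genuine obstacle here: every claim reduces to unwinding the definitions of $\G_0$ and $J^*$, and the only point demanding care is the consistent use of the identities $(J^*)^2 = -Id$ and $(J^*\xi)(Y) = \xi(JY)$, which distinguish the two sign regimes and hence the two named structures.
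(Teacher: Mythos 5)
Your proof is correct: the computations $\J_{\lambda,J}^2=-\mathcal{I}d$, $\G_0(\J_{\lambda,J}\cdot,\J_{\lambda,J}\cdot)=-\lambda\,\G_0(\cdot,\cdot)$, and the identification of $\Phi_{\J_{\lambda,J}}$ all check out, and they use exactly the dual-structure identities $(J^*)^2=-Id$ and $(J^*\xi)(Y)=\xi(JY)$ on which the paper relies. The paper states this proposition without proof precisely because it follows by this kind of direct verification, which is the same method used in its proofs of the neighbouring propositions (e.g.\ for $\J_{J,g,\flat}$ and $\F_{J,g}$), so your approach is essentially the paper's own.
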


\begin{proposition}[{\cite[Sec. 3.2]{IDAMANEA2017}}]
Let $(M, F)$ be an almost product manifold. Then, the generalized almost product structure
    \begin{equation}
      \F_{\lambda, F} =
      \left(
        \begin{array}{cc}
          F  & 0  \\
          0  & \lambda F^*
        \end{array}
      \right),
      \label{EQUATION:PRODUCTLAMBDAF}
    \end{equation}
with $\lambda\in \{+1, -1\}$, forms a natural generalized almost para-Hermitian structure $(\F_{-1, F}, \G_0)$ when $\lambda = -1$, and a natural generalized pseudo-Riemannian almost product structure $(\F_{+1, F}, \G_0)$ when $\lambda = +1$. Depending on $\lambda$, its fundamental tensor is
	\begin{equation*}
	  \Phi_{\F_{\lambda, F}}(X + \xi, Y + \eta) = \G_0(FX + \xi, \lambda FY + \eta).
	\end{equation*}
\end{proposition}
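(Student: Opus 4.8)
The plan is to verify the three assertions in turn: that $\F_{\lambda, F}$ is a generalized almost product structure (so $\al = +1$), that it is compatible with $\G_0$ with compatibility sign $\var = \lambda$, and that its fundamental tensor has the stated form. Since $\F_{\lambda, F}$ is block-diagonal, its action on a section is simply $\F_{\lambda, F}(X + \xi) = FX + \lambda F^*\xi$ for $X \in \xm$, $\xi \in \dm$, and all three checks reduce to direct computations with the defining formula for $\G_0$ in Eq.~\eqref{EQUATION:CANONICALMETRIC}.

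First I would establish $\F_{\lambda, F}^2 = +\mathcal{Id}$. Applying the formula twice gives $\F_{\lambda, F}^2(X + \xi) = F^2 X + \lambda^2 (F^*)^2\xi$. Here $F^2 = Id$ because $(M, F)$ is an almost product manifold, $\lambda^2 = 1$ since $\lambda \in \{+1, -1\}$, and $(F^*)^2 = (F^2)^* = Id$ because dualization reverses composition and $F^2 = Id$ (equivalently, $F^*$ has the same minimal polynomial $x^2-1$ as $F$, as remarked earlier in the excerpt). Hence $\F_{\lambda, F}^2 = \mathcal{Id}$, so $\al = +1$ as claimed.

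Next I would compute the compatibility with $\G_0$. The key step is the adjoint cancellation $(F^*\xi)(FY) = \xi(F(FY)) = \xi(F^2 Y) = \xi(Y)$, which collapses the computation: substituting $\F_{\lambda, F}(X + \xi) = FX + \lambda F^*\xi$ into $\G_0$ and applying Eq.~\eqref{EQUATION:CANONICALMETRIC} yields
\begin{equation*}
  \G_0(\F_{\lambda, F}(X + \xi), \F_{\lambda, F}(Y + \eta)) = \tfrac{1}{2}\big((\lambda F^*\xi)(FY) + (\lambda F^*\eta)(FX)\big) = \lambda\,\G_0(X + \xi, Y + \eta),
\end{equation*}
so $\var = \lambda$. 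Since $\G_0$ is a pseudo-Riemannian metric of neutral signature, the naming in the definition of $(\al, \var)$-metric structures applies directly: $\lambda = +1$ gives $\var = +1$, hence a natural generalized pseudo-Riemannian almost product structure, while $\lambda = -1$ gives $\var = -1$, hence a natural generalized almost para-Hermitian structure (the compatibility automatically forcing $\F_{-1, F}$ to be almost paracomplex, as already noted for the abstract definition).

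Finally, for the fundamental tensor I would compute $\Phi_{\F_{\lambda, F}}(X + \xi, Y + \eta) = \G_0(\F_{\lambda, F}(X + \xi), Y + \eta) = \G_0(FX + \lambda F^*\xi, Y + \eta)$, which by Eq.~\eqref{EQUATION:CANONICALMETRIC} equals $\tfrac{1}{2}(\lambda\,\xi(FY) + \eta(FX))$; the same value arises from $\G_0(FX + \xi, \lambda FY + \eta)$, giving the stated formula. I expect no genuine obstacle here: the whole argument mirrors the proof of the preceding proposition for $\J_{\lambda, J}$ with $J^2 = -Id$ replaced by $F^2 = +Id$, and the only point requiring care is the bookkeeping of the factor $\lambda$ together with the repeated use of the adjoint relation $(F^*\xi)(FY) = \xi(F^2 Y)$, which is precisely what makes every cross term simplify.
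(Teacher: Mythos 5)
Your proof is correct: the paper states this proposition without proof (citing Ida--Manea), and your direct verification --- computing $\F_{\lambda,F}^2 = \mathcal{I}d$, using $(F^*\xi)(FY) = \xi(F^2Y) = \xi(Y)$ to get $\G_0(\F_{\lambda,F}\cdot, \F_{\lambda,F}\cdot) = \lambda\,\G_0(\cdot,\cdot)$, and then reading off the fundamental tensor --- is exactly the computation the paper carries out for its analogous propositions (e.g.\ for $\J_{J,g,\flat}$ and $\F_{J,g}$). Your appeal to the abstract definition to conclude that $\var = -1$ forces $\F_{-1,F}$ to be almost paracomplex and the naming conventions for $\var = \pm 1$ with the neutral pseudo-Riemannian metric $\G_0$ are also handled as the paper intends.
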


Now, we want to obtain $(\al, \var)$-metric structures on the generalized tangent bundle $\TM$ from an $(\al, \var)$-metric manifold. If we denote the metric on the manifold as $g$ and its fundamental tensor as $\phi$, we have already induced the natural generalized $(\al, \var)$-metric structures $(\J_g, \G_0)$, $(\F_g, \G_0)$, $(\J_{\phi}, \G_0)$, $(\F_{\phi}, \G_0)$ mentioned in Propositions \ref{PROPOSITION:NATURALALVARMETRIC}, \ref{PROPOSITION:NATURALALVARSYMPLECTIC}. Also, when the polynomial structure $J$ on the manifold is almost complex then we obtain the natural generalized $(\al, \var)$-metric structures $(\J_{+1, J}, \G_0)$, $(\J_{-1, J}, \G_0)$, while if the polynomial structure $F$ on the manifold is almost product we have the natural generalized $(\al, \var)$-metric structures $(\F_{+1, F}, \G_0)$, $(\F_{-1, F}, \G_0)$.

In \cite[Props. 4.8, 4.9]{ETAYOGOMEZNICOLASSANTAMARIA2022}, we proposed various polynomial structures that in matrix notation are represented by triangular matrices, inspired in different examples from \cite{IDAMANEA2017, NANNICINI2010}. These structures are induced by $(\al, \var)$-metric manifolds. Depending on the value of $\var$, these generalized polynomial structures are compatible with the natural generalized metric $\G_0$, as it is shown in the following propositions. As their proofs are very similar, we omit the second one.

\begin{proposition}
Let $(M, J, g)$ be an $(\al, \var)$-metric manifold with $\al = -1$. Then, the induced generalized almost complex structures 
	\begin{gather}
	  \J_{J, g, \flat} =
	  \left(
		\begin{array}{cc}
		  J        & 0         \\
		  \flat_g  & \var J^*
		\end{array}
	  \right), \enspace\enspace\enspace 
	  \J_{J, g, \sharp} =
	  \left(
		\begin{array}{cc}
		  J  & \sharp_g  \\
		  0  & \var J^*
		\end{array}
	  \right),
	  \label{EQUATION:TRIANGULARMJG}
	\end{gather}
are compatible with $\G_0$ if and only if $\var = +1$, that is, if $(M, J, g)$ is an almost Hermitian manifold or an indefinite almost Hermitian manifold (depending on $g$). In this case, each one generates a natural generalized almost Norden structure $(\J_{J, g, \flat}, \G_0)$, $(\J_{J, g, \sharp}, \G_0)$, with twin metrics
	\begin{gather*}
	  \Phi_{\J_{J, g, \flat}}(X + \xi, Y + \eta) = \G_0(JX + \xi, JY + \eta) + \frac{1}{2}g(X, Y),  \\
	  \Phi_{\J_{J, g, \sharp}}(X + \xi, Y + \eta) = \G_0(JX + \xi, JY + \eta) + \frac{1}{2}g(\sharp_g\xi, \sharp_g\eta).
	\end{gather*}
\end{proposition}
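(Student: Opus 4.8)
The plan is to prove the statement for $\J_{J,g,\flat}$ in full and indicate the parallel argument for $\J_{J,g,\sharp}$. First I would confirm that both endomorphisms really are generalized almost complex structures, since the conclusion (an almost Norden structure) requires $\J^2 = -Id$. Squaring the matrix of $\J_{J,g,\flat}$ gives $J^2$ and $\var^2(J^*)^2$ on the diagonal, both equal to $-Id$ because $J^2 = -Id$ forces $(J^*)^2 = (J^2)^* = -Id$ and $\var^2 = 1$; the lower-left block is $\flat_g J + \var J^*\flat_g$, which vanishes by the relation $\flat_g J = \al\var J^*\flat_g = -\var J^*\flat_g$ from Proposition \ref{PROPOSITION:EQUALITIESFLATSHARP} with $\al = -1$. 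The same computation for $\J_{J,g,\sharp}$ uses instead $\sharp_g J^* = \al\var J\sharp_g$ to kill the upper-right block $J\sharp_g + \var\sharp_g J^*$.

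The core of the proof is the compatibility computation with $\G_0$. I would expand $\G_0(\J_{J,g,\flat}(X+\xi), \J_{J,g,\flat}(Y+\eta))$ directly from the definition of $\G_0$, using that $\J_{J,g,\flat}(X+\xi)$ has vector part $JX$ and covector part $\flat_g X + \var J^*\xi$, together with $(J^*\xi)(JY) = \xi(J^2Y) = -\xi(Y)$. The mixed vector–covector contributions assemble into $-\var\,\G_0(X+\xi, Y+\eta)$, while the tangent–tangent contributions leave a residual proportional to $g(X, JY) + g(JX, Y)$.

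The key step is to recognize that this residual is the sole obstruction to compatibility and that it is governed by $\var$. Applying the equivalent form $g(JX, Y) = \al\var\,g(X, JY) = -\var\,g(X, JY)$ of the $(\al,\var)$-metric condition collapses the residual to $(1-\var)\,g(X, JY)$, so that
\[
\G_0(\J_{J,g,\flat}(X+\xi),\, \J_{J,g,\flat}(Y+\eta)) = -\var\,\G_0(X+\xi, Y+\eta) + \tfrac{1-\var}{2}\,g(X, JY).
\]
Since $\G_0$ vanishes on pairs of vector fields, restricting the desired identity $\G_0(\J\,\cdot\,, \J\,\cdot\,) = \var'\G_0$ to $\xi = \eta = 0$ forces $(1-\var)\,g(X, JY) = 0$ for all $X, Y$; as $g$ is nondegenerate and $J$ is invertible this gives $\var = +1$. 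Conversely, when $\var = +1$ the residual disappears and the scalar is $-1$, so with $\J^2 = -Id$ the pair $(\J_{J,g,\flat}, \G_0)$ is a $(-1,-1)$-metric structure; because $\G_0$ is pseudo-Riemannian of neutral signature this is exactly a natural generalized almost Norden structure, and $\var = +1$ is precisely the statement that $(M, J, g)$ is (indefinite) almost Hermitian. The identical expansion for $\J_{J,g,\sharp}$ yields the residual $g(\sharp_g\xi, J\sharp_g\eta) + g(\sharp_g\eta, J\sharp_g\xi)$, which the same relation turns into $(1-\var)\,g(\sharp_g\xi, J\sharp_g\eta)$, producing the same dichotomy.

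Finally, for the twin metrics I would compute the fundamental tensor $\Phi(X+\xi, Y+\eta) = \G_0(\J(X+\xi), Y+\eta)$ at $\var = +1$ (note $\al'\var' = (-1)(-1) = +1$, so $\Phi$ is indeed a metric). For $\J_{J,g,\flat}$ this gives $\tfrac12(g(X,Y) + \xi(JY) + \eta(JX))$, which one rewrites as $\G_0(JX+\xi, JY+\eta) + \tfrac12 g(X,Y)$; for $\J_{J,g,\sharp}$ the same computation gives $\tfrac12(\xi(JY) + \eta(JX) + g(\sharp_g\xi, \sharp_g\eta)) = \G_0(JX+\xi, JY+\eta) + \tfrac12 g(\sharp_g\xi, \sharp_g\eta)$, matching the stated formulas. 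The only genuinely non-routine point is the third paragraph: seeing that the entire obstruction sits in the single factor $1-\var$, which is what makes the criterion so clean and explains why almost Hermitian bases ($\var=+1$), rather than almost Norden ones, are the compatible case.
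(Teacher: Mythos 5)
Your proposal is correct and takes essentially the same approach as the paper's proof: direct expansion of $\G_0(\J_{J,g,\flat}\,\cdot\,,\J_{J,g,\flat}\,\cdot\,)$ from the definition of $\G_0$, reduction of the tangent--tangent residual to $\tfrac{1-\var}{2}\,g(X,JY)$ via $g(JX,Y)=-\var\,g(X,JY)$, the same dichotomy in $\var$, and the identical twin-metric computations. Your two additions---verifying $\J_{J,g,\flat}^2=\J_{J,g,\sharp}^2=-Id$ using Proposition \ref{PROPOSITION:EQUALITIESFLATSHARP}, and spelling out the nondegeneracy argument behind the ``only if'' direction---simply make explicit steps that the paper leaves implicit (the first having been established in the cited prior work).
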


\begin{proof}
We show here the calculations for both polynomial structures, based in Proposition \ref{PROPOSITION:EQUALITIESFLATSHARP}:
	\begin{align*}
	  \G_0(\J_{J, g, \flat}(X + \xi), \J_{J, g, \flat}(Y + \eta)) 	 &= \G_0(JX + \flat_g X + \var J^*\xi, JY + \flat_g Y + \var J^*\eta)  \\
	  															  	 &= \frac{1}{2}((\flat_g X + \var J^*\xi)(JY) + (\flat_g Y + \var J^*\eta)(JX))  \\
	  															  	 &= \frac{1}{2}(g(X, JY) + \var\xi(J^2Y) + g(Y, JX) + \var\eta(J^2X))  \\
	  															  	 &= \frac{1-\var}{2}g(X, JY) - \frac{\var}{2}(\xi(Y) + \eta(X))  \\
	  															  	 &= \frac{1-\var}{2}g(X, JY) - \var\G_0(X + \xi, Y + \eta),  \\
	   \G_0(\J_{J, g, \sharp}(X + \xi), \J_{J, g, \sharp}(Y + \eta)) &= \G_0(JX + \sharp_g\xi + \var J^*\xi, JY + \sharp_g\eta + \var J^*\eta)  \\
	   																 &= \frac{\var}{2}((J^*\xi)(JY + \sharp_g\eta) + (J^*\eta)(JX + \sharp_g\xi))  \\
	  															  	 &= \frac{\var}{2}(\xi(J^2Y) + g(\sharp_g \xi, J\sharp_g\eta) + \eta(J^2X) + g(\sharp_g \eta, J\sharp_g\xi))  \\
	  															  	 &= \frac{\var-1}{2}g(\sharp_g \xi, J\sharp_g \eta) - \frac{\var}{2}(\xi(Y) + \eta(X))  \\
	  															  	 &= \frac{\var-1}{2}g(\sharp_g \xi, J\sharp_g \eta) - \var\mathcal G_0(X + \xi, Y + \eta).
    \end{align*}
Then, these two structures are compatible with $\G_0$ if and only if $\var = +1$. The calculations for the twin metrics are the following:
	\begin{gather*}
	  \Phi_{\J_{J, g, \flat}}(X + \xi, Y + \eta) = \G_0(JX + \flat_g X + J^*\xi, Y + \eta) = \frac{1}{2}((\flat_g X + J^*\xi)(Y) + \eta(JX)) = \G_0(JX + \xi, JY + \eta) + \frac{1}{2}g(X, Y),  \\
	  \Phi_{\J_{J, g, \sharp}}(X + \xi, Y + \eta) = \G_0(JX + \sharp_g\xi + J^*\xi, Y + \eta) = \frac{1}{2}((J^*\xi)(Y) + \eta(JX + \sharp_g\xi)) = \G_0(JX + \xi, JY + \eta) + \frac{1}{2}g(\sharp_g\xi, \sharp_g\eta).
	\end{gather*}
Therefore, the result holds.
\end{proof}

\begin{proposition}
Let $(M, F, g)$ be an $(\al, \var)$-metric structure on $M$ with $\al = +1$. Then, the induced generalized almost product structures 
	\begin{equation}
	  \F_{F, g, \flat} =
	  \left(
		\begin{array}{cc}
		  F        & 0         \\
		  \flat_g  & -\var F^*
		\end{array}
	  \right), \enspace\enspace\enspace 
	  \F_{F, g, \sharp} =
	  \left(
		\begin{array}{cc}
		  F  & \sharp_g  \\
		  0  & -\var F^*
		\end{array}
	  \right),
	  \label{EQUATION:TRIANGULARMFG}
	\end{equation}
are compatible with $\G_0$ if and only if $\var = -1$, that is, if $(M, F, g)$ is an almost para-Hermitian manifold. Then, we have the natural generalized pseudo-Riemannian almost product structures $(\F_{F, g, \flat}, \G_0)$, $(\F_{F, g, \sharp}, \G_0)$, with twin metrics
	\begin{gather*}
	  \Phi_{\F_{F, g, \flat}}(X + \xi, Y + \eta) = \G_0(FX + \xi, FY + \eta) + \frac{1}{2}g(X, Y),  \\
	  \Phi_{\F_{F, g, \sharp}}(X + \xi, Y + \eta) = \G_0(FX + \xi, FY + \eta) + \frac{1}{2}g(\sharp_g\xi, \sharp_g\eta).
	\end{gather*}
\end{proposition}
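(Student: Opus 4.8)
The plan is to mirror the computation carried out for the previous proposition, adapting every sign to the case $\al = +1$. The only algebraic inputs I would use about the $(\al, \var)$-metric manifold $(M, F, g)$ are: first, $F^2 = Id$ (hence $F^{*2} = Id$); and second, the compatibility relation, which for $\al = +1$ reads $g(FX, Y) = \var g(X, FY)$ for all $X, Y \in \xm$, equivalently $\flat_g F = \var F^* \flat_g$. It is worth noting that this same identity forces the off-diagonal block $\flat_g F - \var F^*\flat_g$ of $\F_{F, g, \flat}^2$ (and the analogous one for $\F_{F, g, \sharp}$) to vanish, so that $\F^2 = \mathcal{Id}$; this is what makes both structures almost product.

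First I would compute $\G_0(\F_{F, g, \flat}(X + \xi), \F_{F, g, \flat}(Y + \eta))$ straight from the matrix form, exactly as in the $\al = -1$ proof. Writing $\F_{F, g, \flat}(X + \xi) = FX + (\flat_g X - \var F^*\xi)$ and applying the definition of $\G_0$, the terms $(F^*\xi)(FY) = \xi(F^2 Y) = \xi(Y)$ collapse by $F^2 = Id$, while the cross terms $g(X, FY) + g(Y, FX)$ combine—using $g(Y, FX) = g(FX, Y) = \var g(X, FY)$—into $(1 + \var)\,g(X, FY)$. The result would be
\[
  \G_0(\F_{F, g, \flat}(X + \xi), \F_{F, g, \flat}(Y + \eta)) = \frac{1 + \var}{2}\,g(X, FY) - \var\,\G_0(X + \xi, Y + \eta).
\]
The parallel computation for $\F_{F, g, \sharp}$, now invoking $\xi(F\sharp_g\eta) = g(\sharp_g\xi, F\sharp_g\eta)$ and the same compatibility identity, yields a remainder term proportional to $(1 + \var)\,g(\sharp_g\xi, F\sharp_g\eta)$, again added to $-\var\,\G_0(X + \xi, Y + \eta)$.

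From either identity the equivalence is immediate: since $g$ is nondegenerate and $F$ invertible, the remainder cannot vanish identically unless its coefficient $(1 + \var)$ is zero, i.e. unless $\var = -1$. When $\var = -1$ both expressions reduce to $+\G_0(X + \xi, Y + \eta)$, so the two structures are isometric; being almost product ($\F^2 = \mathcal{Id}$) and isometric with respect to the neutral metric $\G_0$, each is a $(+1, +1)$-metric structure, that is, a generalized pseudo-Riemannian almost product structure, and $\var = -1$ together with $\al = +1$ says precisely that $(M, F, g)$ is almost para-Hermitian. To obtain the twin metrics I would then set $\var = -1$ and evaluate $\Phi_{\F}(X + \xi, Y + \eta) = \G_0(\F(X + \xi), Y + \eta)$: the flat case produces the extra term $(\flat_g X)(Y) = g(X, Y)$, the sharp case produces $\eta(\sharp_g\xi) = g(\sharp_g\xi, \sharp_g\eta)$, and recognizing $\tfrac{1}{2}(\xi(FY) + \eta(FX)) = \G_0(FX + \xi, FY + \eta)$ gives exactly the two stated formulas.

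The computation is routine; the only point requiring care is the bookkeeping of signs in the cross terms, where it is the factor $(1 + \var)$—rather than the $(1 - \var)$ of the $\al = -1$ proposition—that pins the compatibility threshold at $\var = -1$ instead of $\var = +1$. The ``only if'' direction relies on the nondegeneracy of $g$ (and invertibility of $F$) to preclude any accidental vanishing of the remainder term.
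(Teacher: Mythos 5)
Your proposal is correct and follows essentially the same route as the paper: the paper omits this proof as ``very similar'' to the preceding proposition ($\al = -1$ case), and your computation is precisely that parallel argument with the signs adapted, yielding the remainder coefficient $\frac{1+\var}{2}$ in place of $\frac{1-\var}{2}$ and hence the threshold $\var = -1$. Your extra observation that the compatibility identity $\flat_g F = \var F^*\flat_g$ is what makes $\F_{F,g,\flat}^2 = \F_{F,g,\sharp}^2 = \mathcal{I}d$ is also correct (it is Proposition \ref{PROPOSITION:EQUALITIESFLATSHARP} applied with $\al = +1$, and is the content of the cited result from the authors' earlier paper).
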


Finally, in \cite[Props. 4.10, 4.11]{ETAYOGOMEZNICOLASSANTAMARIA2022} we presented new examples of induced generalized polynomial structures from an $(\al, \var)$-metric manifold. In the following propositions, we show that these structures form generalized $(\al, \var)$-metric structures with the metric $\G_0$ depending on the characteristics of the $(\al, \var)$-metric structure on the manifold. We prove only the first proposition, as the proof of the second one is parallel to the first one.

\begin{proposition}
Let $(M, J, g)$ be an $(\al, \var)$-metric manifold with $\al = -1$. Then, the generalized almost paracomplex structure
	\begin{equation}
	  \F_{J, g} =
	  \left(
		\begin{array}{cc}
		  J        & \sqrt{2}\ \sharp_g  \\
		  \sqrt{2}\ \flat_g  & \var J^*
		\end{array}
	  \right),
	  \label{EQUATION:PRODUCTJG}
	\end{equation}
is compatible with $\G_0$ if and only if $\var = +1$, that is, if $(M, J, g)$ is an almost Hermitian manifold or an indefinite almost Hermitian manifold (depending on $g$). In this case, $(\F_{J, g}, \G_0)$ is a natural generalized pseudo-Riemannian almost product structure. The twin metric associated to this structure is
	\begin{equation*}
	  \Phi_{\F_{J, g}}(X + \xi, Y + \eta) = \G_0(JX + \xi, JY + \eta) + \frac{\sqrt{2}}{2}(g(X, Y) + g(\sharp_g\xi, \sharp_g\eta)).
	\end{equation*}
\end{proposition}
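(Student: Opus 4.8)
The plan is to compute the bilinear form $\G_0(\F_{J,g}(X+\xi),\F_{J,g}(Y+\eta))$ explicitly from the matrix expression \eqref{EQUATION:PRODUCTJG} and the definition \eqref{EQUATION:CANONICALMETRIC} of $\G_0$, and then to read off for which $\var$ the result is a scalar multiple of $\G_0(X+\xi,Y+\eta)$. Note first that $\F_{J,g}^2 = Id$ holds for both signs of $\var$: the off-diagonal blocks of the square cancel by the relations $\sharp_g J^* = -\var\,J\sharp_g$ and $\flat_g J = -\var\,J^*\flat_g$ of Proposition \ref{PROPOSITION:EQUALITIESFLATSHARP}, while the diagonal blocks give $J^2 + 2\,Id = Id$ and $2\,Id + (J^*)^2 = Id$. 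So only the interaction with $\G_0$ is at stake. Writing $\F_{J,g}(X+\xi) = (JX+\sqrt{2}\,\sharp_g\xi)+(\sqrt{2}\,\flat_g X+\var J^*\xi)$ and likewise for $Y+\eta$, the definition of $\G_0$ pairs the vector part of each argument with the form part of the other, producing a sum of eight elementary pairings.

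I would simplify these using four inputs: $\sharp_g\flat_g = \flat_g\sharp_g = Id$; the evaluations $(\flat_g X)(Z)=g(X,Z)$ and $\zeta(\sharp_g\xi)=g(\sharp_g\zeta,\sharp_g\xi)$; the identity $(J^*\xi)(JY)=\xi(J^2Y)=-\xi(Y)$ from $J^2=-Id$; and, most importantly, the compatibility of the $(\al,\var)$-metric manifold with $\al=-1$, namely $g(JX,Y)=-\var\,g(X,JY)$. This last relation is what makes the symmetric cross-terms collapse, since $g(X,JY)+g(Y,JX)=(1-\var)g(X,JY)$ and likewise $g(\sharp_g\xi,J\sharp_g\eta)+g(\sharp_g\eta,J\sharp_g\xi)=(1-\var)g(\sharp_g\xi,J\sharp_g\eta)$. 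After collecting terms I expect
\begin{equation*}
\G_0(\F_{J,g}(X+\xi),\F_{J,g}(Y+\eta)) = \tfrac{1}{2}\big[(2-\var)(\xi(Y)+\eta(X)) + \sqrt{2}(1-\var)g(X,JY) + \sqrt{2}\,\var(1-\var)g(\sharp_g\xi,J\sharp_g\eta)\big].
\end{equation*}

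From this the equivalence reads off at once. If $\var=+1$ the two cross-terms vanish and the surviving term is exactly $\xi(Y)+\eta(X)$, so $\G_0(\F_{J,g}\,\cdot,\F_{J,g}\,\cdot)=\G_0(\,\cdot\,,\cdot\,)$; together with $\F_{J,g}^2=Id$ and the fact that $\G_0$ is pseudo-Riemannian of neutral signature, this says $(\F_{J,g},\G_0)$ is a generalized pseudo-Riemannian almost product structure. For the converse I would restrict to pure vector fields $\xi=\eta=0$, where the formula gives $\G_0(\F_{J,g}X,\F_{J,g}Y)=\tfrac{\sqrt{2}}{2}(1-\var)g(X,JY)$ while $\G_0(X,Y)=0$; if $\var=-1$ this equals $\sqrt{2}\,g(X,JY)$, which cannot vanish identically (otherwise nondegeneracy of $g$ would force $J=0$, contradicting $J^2=-Id$), so no $\var_0\in\{+1,-1\}$ can satisfy $\G_0(\F_{J,g}\,\cdot,\F_{J,g}\,\cdot)=\var_0\,\G_0(\,\cdot\,,\cdot\,)$. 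Hence compatibility holds precisely when $\var=+1$.

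Finally, for the twin metric I would evaluate $\Phi_{\F_{J,g}}(X+\xi,Y+\eta)=\G_0(\F_{J,g}(X+\xi),Y+\eta)$ at $\var=+1$ directly from the definitions; recognising $\tfrac{1}{2}(\xi(JY)+\eta(JX))=\G_0(JX+\xi,JY+\eta)$ and $\eta(\sharp_g\xi)=g(\sharp_g\xi,\sharp_g\eta)$ yields the stated formula. The only genuinely delicate point of the argument is the bookkeeping of the cross-terms in the first computation and the repeated, correctly signed use of $g(JX,Y)=-\var\,g(X,JY)$ (equivalently the musical identities of Proposition \ref{PROPOSITION:EQUALITIESFLATSHARP}); once the displayed formula is in hand, both the if-and-only-if statement and the twin metric are immediate.
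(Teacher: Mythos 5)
Your proposal is correct and takes essentially the same route as the paper's own proof: a direct expansion of $\G_0(\F_{J,g}(X+\xi), \F_{J,g}(Y+\eta))$ simplified via $g(JX,Y) = -\var\, g(X,JY)$ and the musical identities of Proposition \ref{PROPOSITION:EQUALITIESFLATSHARP}, arriving at the same key formula (your coefficient $\var(1-\var)$ equals the paper's $-(1-\var)$ because $\var^2 = 1$), followed by the same direct computation of the twin metric. Your additional touches, namely verifying $\F_{J,g}^2 = Id$ and making the failure of compatibility for $\var = -1$ explicit by restricting to pure vector fields, only add rigor to what the paper leaves implicit.
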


\begin{proof}
We use Eq. \eqref{EQUATION:EQUALITIESFLATSHARP} to check that $\F_{J, g}$ is compatible with the natural generalized metric $\G_0$:
	\begin{align*}
	  \G_0(\F_{J, g}(X + \xi), \F_{J, g}(Y + \eta)) =& \G_0(JX + \sqrt{2}\ \sharp_g \xi + \sqrt{2}\ \flat_g X + \var J^*\xi, JY + \sqrt{2}\ \sharp_g \eta + \sqrt{2}\ \flat_g Y + \var J^*\eta)  \\
	  												=& \frac{1}{2}\left((\sqrt{2}\ \flat_g X + \var J^* \xi)(JY + \sqrt{2}\ \sharp_g \eta) + (\sqrt{2}\ \flat_g Y + \var J^*\eta)(JX + \sqrt{2}\ \sharp_g\xi)\right)  \\
	  												=& \frac{1}{2}\left(\sqrt{2}\ g(X, JY) + \var\xi(J^2 Y) + 2 g(X, \sharp_g \eta) + \sqrt{2}\ \var g(\sharp_g J^*\xi, \sharp_g \eta)\right.  \\
	  												&\left.+ \sqrt{2}\ g(Y, JX) + \var\eta(J^2 X) + 2 g(Y, \sharp_g \xi) + \sqrt{2}\ \var g(\sharp_g J^*\eta, \sharp_g \xi)\right)  \\
	  												=& \frac{\sqrt{2}(1 - \var)}{2}(g(X, JY) - g(\sharp_g J^*\xi, \sharp_g \eta)) + \frac{2 - \var}{2}(\xi(Y) + \eta(X))  \\
	  												=& \frac{\sqrt{2}(1 - \var)}{2}(g(X, JY) - g(\sharp_g J^*\xi, \sharp_g \eta)) + (2 - \var) \G_0(X + \xi, Y + \eta).
    \end{align*}
Therefore, $\F_{J, g}$ is compatible with $\G_0$ if and only if $\var = +1$. The computation for the twin metric associated to the structure $(\F_{J, g}, \G_0)$ when $\var = +1$ is the following:
	\begin{align*}
	  \Phi_{\F_{J, g}}(X + \xi, Y + \eta) &= \G_0(JX + \sqrt{2}\ \sharp_g \xi + \sqrt{2}\ \flat_g X + J^*\xi, Y + \eta) = \frac{1}{2}((\sqrt{2}\ \flat_g X + J^*\xi)(Y) + \eta(JX + \sqrt{2}\ \sharp_g\xi))  \\
	  									  &= \frac{1}{2}(\xi(JY) + \eta(JX)) + \frac{\sqrt{2}}{2}(g(X, Y) + g(\sharp_g\xi, \sharp_g\eta))  \\
	  									  &= \G_0(JX + \xi, JY + \eta) + \frac{\sqrt{2}}{2}(g(X, Y) + g(\sharp_g\xi, \sharp_g\eta)).
	\end{align*}
Hence, the result is proven.
\end{proof}

\begin{proposition}
Let $(M, F, g)$ be an $(\al, \var)$-metric manifold with $\al = +1$. Then, the generalized almost complex structure
	\begin{equation}
	  \J_{F, g} =
	  \left(
		\begin{array}{cc}
		  F        & -\sqrt{2}\ \sharp_g  \\
		  \sqrt{2}\ \flat_g  & -\var F^*
		\end{array}
	  \right),
	  \label{EQUATION:COMPLEXFG}
	\end{equation}
is compatible with $\G_0$ if and only if $\var = -1$, that is, if $(M, F, g)$ is an almost para-Hermitian manifold. In this case, $(\J_{F, g}, \G_0)$ is a natural generalized almost Norden structure. The twin metric associated to this structure is
	\begin{equation*}
	  \Phi_{\J_{F, g}}(X + \xi, Y + \eta) = \G_0(FX + \xi, FY + \eta) + \frac{\sqrt{2}}{2}(g(X, Y) - g(\sharp_g\xi, \sharp_g\eta)).
	\end{equation*}
\end{proposition}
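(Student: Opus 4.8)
The plan is to run the argument exactly parallel to the preceding proposition, exchanging the role of $J$ (where $\al=-1$) for $F$ (where $\al=+1$) and tracking the two extra minus signs sitting in the off-diagonal entries of $\J_{F,g}$. First I would confirm that $\J_{F,g}$ is indeed a generalized almost complex structure by squaring its block matrix. Using $F^2 = Id$ and the fact that $\sharp_g$ and $\flat_g$ are mutually inverse, each diagonal block reduces to $Id - 2\,Id = -Id$; and using the identities $\flat_g F = \var F^*\flat_g$ and $\sharp_g F^* = \var F\sharp_g$ supplied by Proposition~\ref{PROPOSITION:EQUALITIESFLATSHARP} (with $\al=+1$), the two off-diagonal blocks cancel. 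Hence $\J_{F,g}^2 = -\mathcal{Id}$ for both values of $\var$.

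Next I would test compatibility with $\G_0$ by expanding $\G_0(\J_{F,g}(X+\xi), \J_{F,g}(Y+\eta))$ from the definition \eqref{EQUATION:CANONICALMETRIC}, after writing $\J_{F,g}(X+\xi) = (FX - \sqrt 2\,\sharp_g\xi) + (\sqrt 2\,\flat_g X - \var F^*\xi)$. The mixed tangent--cotangent pairings collapse via $(\flat_g X)(\sharp_g\eta) = \eta(X)$ and $F^2 = Id$, while the genuine cross-terms $g(X, FY) + g(FX, Y)$ and $\xi(F\sharp_g\eta) + \eta(F\sharp_g\xi)$ are symmetrized with the compatibility relation $g(FX, Y) = \var g(X, FY)$, each picking up a factor $1+\var$. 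I expect to reach an identity of the form
	\begin{equation*}
	  \G_0(\J_{F,g}(X+\xi), \J_{F,g}(Y+\eta)) = \frac{\sqrt 2\,(1+\var)}{2}\big(g(X, FY) + \var\, g(\sharp_g\xi, F\sharp_g\eta)\big) - (2+\var)\,\G_0(X+\xi, Y+\eta).
	\end{equation*}
The residual term carries the factor $1+\var$, so it vanishes for all arguments precisely when $\var = -1$, i.e.\ when $(M, F, g)$ is almost para-Hermitian; in that case the surviving coefficient equals $-(2+\var) = -1$, so $\J_{F,g}$ is anti-isometric for $\G_0$. Together with $\J_{F,g}^2 = -\mathcal{Id}$ this makes $(\J_{F,g}, \G_0)$ an $(\al,\var)$-metric structure with $\al=\var=-1$, namely a natural generalized almost Norden structure.

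Finally I would obtain the twin metric (a metric here because $\al\var = +1$) by the same short computation as in the previous proof: with $\var=-1$, the expression $\Phi_{\J_{F,g}}(X+\xi, Y+\eta) = \G_0(\J_{F,g}(X+\xi), Y+\eta)$ expands to $\tfrac12(\xi(FY) + \eta(FX)) + \tfrac{\sqrt 2}{2}(g(X,Y) - g(\sharp_g\xi, \sharp_g\eta))$, and recognising $\tfrac12(\xi(FY) + \eta(FX)) = \G_0(FX+\xi, FY+\eta)$ gives the stated formula.

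The only delicate part is the bookkeeping in the second step: one must invoke Proposition~\ref{PROPOSITION:EQUALITIESFLATSHARP} in the right direction and keep the two off-diagonal minus signs straight, so that the non-metric remainder comes out with the factor $1+\var$ (forcing $\var=-1$) rather than $1-\var$; apart from this sign care the computation is routine and formally mirrors the preceding proposition.
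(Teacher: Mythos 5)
Your proof is correct and follows essentially the same approach as the paper, which proves only the parallel $\al=-1$ proposition (for $\F_{J,g}$) and explicitly omits this one as analogous: your expansion of $\G_0(\J_{F,g}(X+\xi),\J_{F,g}(Y+\eta))$ produces exactly the right identity, with the residual term carrying the factor $1+\var$ and the anti-isometry coefficient $-(2+\var)=-1$ at $\var=-1$, and your twin-metric computation matches the stated formula. The extra verification that $\J_{F,g}^2=-\mathcal{Id}$ via Proposition \ref{PROPOSITION:EQUALITIESFLATSHARP} is a harmless addition which the paper instead imports from its earlier work.
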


\subsection{Induced generalized $(\al, \var)$-metric structures}

When the generalized metric considered on $\TM$ is not the canonical one, we can also obtain generalized $(\al, \var)$-metric structures. Here, we consider the generalized metric induced by a Riemannian or pseudo-Riemannian metric $g$ on the base manifold from Definition \ref{DEFINITION:INDUCEDGENERALIZEDMETRIC}.

\begin{definition}
Let $(M, g)$ be a Riemannian or pseudo-Riemannian manifold. A generalized $(\al, \var)$-metric structure is called \emph{induced} if it is an $(\al, \var)$-metric structure on $\TM$ with the generalized metric $\G_g$ from Eq. \eqref{EQUATION:INDUCEDGENERALIZEDMETRIC}.
\end{definition}

There are four viable kinds of induced generalized $(\al, \var)$-metric structures: generalized Riemannian or pseudo-Riemannian almost product structures (if $\G_g$ is Riemannian and the generalized polynomial structure is almost paracomplex, we call them generalized almost para-Norden structures), generalized almost para-Hermitian structures, generalized almost Hermitian or indefinite almost Hermitian structures, and generalized almost Norden structures.

We analyse first the induced generalized $(\al, \var)$-metric structures that can be obtained from a Riemannian or pseudo-Riemannian manifold $(M, g)$. As we have the generalized almost complex structure $\J_g$ from Eq. \eqref{EQUATION:COMPLEXG} and the generalized almost paracomplex structure $\F_g$ from Eq. \eqref{EQUATION:PRODUCTG}, the following proposition can be inferred.

\begin{proposition}
Let $(M, g)$ be a Riemannian or pseudo-Riemannian manifold. Then, the generalized almost complex structure $\J_g$ from Eq. \eqref{EQUATION:COMPLEXG} and the generalized almost paracomplex structure $\F_g$ from Eq. \eqref{EQUATION:PRODUCTG} are isometric with respect to the generalized metric $\G_g$. In other words, $(\J_g, \G_g)$ is a generalized almost Hermitian or indefinite almost Hermitian structure (depending on $g$) and $(\F_g, \G_g)$ is a generalized almost para-Norden or a pseudo-Riemannian almost product structure (depending on $g$). Their respective fundamental tensors are
	\begin{gather*}
	  \Phi_{\J_g}(X + \xi, Y + \eta) = -2\Omega_0(X + \xi, Y + \eta),  \\
	  \Phi_{\F_g}(X + \xi, Y + \eta) = 2\G_0(X + \xi, Y + \eta).
	\end{gather*}
\end{proposition}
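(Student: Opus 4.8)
The plan is to verify the isometry condition for both structures by a direct computation that uses only the defining relations of the musical isomorphisms, and then to read off the fundamental tensors. Since Proposition \ref{PROPOSITION:NATURALALVARMETRIC} already exhibits $\J_g$ as a generalized almost complex structure and $\F_g$ as a generalized almost paracomplex one, the values $\al = -1$ for $\J_g$ and $\al = +1$ for $\F_g$ are available without further work; the remaining content is to show that both are isometric with respect to $\G_g$ (that is, $\var = +1$) and to classify the resulting structures.

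The only facts about $\flat_g$ and $\sharp_g$ I would invoke are that they are mutually inverse, $\flat_g\sharp_g = \sharp_g\flat_g = Id$, which yields the identities $g(\sharp_g\xi, Y) = \xi(Y)$, $g(X, \sharp_g\eta) = \eta(X)$ and $g(\sharp_g\flat_g X, \sharp_g\flat_g Y) = g(X, Y)$ for all $X, Y\in \xm$ and $\xi, \eta\in \dm$. With these in hand, I would compute $\G_g(\J_g(X + \xi), \J_g(Y + \eta))$ by first writing $\J_g(X + \xi) = -\sharp_g\xi + \flat_g X$, then splitting $\G_g$ over its vector and covector parts according to Eq. \eqref{EQUATION:INDUCEDGENERALIZEDMETRIC}; the result collapses to $g(\sharp_g\xi, \sharp_g\eta) + g(X, Y) = \G_g(X + \xi, Y + \eta)$. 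The computation for $\F_g$ is identical up to the sign carried by $-\sharp_g$, which disappears once it is squared, giving the same isometry. This establishes $\var = +1$ in both cases.

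Combining $\al = -1$, $\var = +1$ for $\J_g$ and $\al = +1$, $\var = +1$ for $\F_g$ with the classification in the definition of an $(\al, \var)$-metric structure then settles the naming: $(\J_g, \G_g)$ is almost Hermitian or indefinite almost Hermitian, and $(\F_g, \G_g)$ is a Riemannian or pseudo-Riemannian almost product structure, specialising to almost para-Norden exactly when $\G_g$ is Riemannian, i.e. when $g$ is Riemannian. Finally, I would compute the fundamental tensors from $\phi(X + \xi, Y + \eta) = \G_g(J(X + \xi), Y + \eta)$: for $\J_g$ the two surviving terms are $-g(\sharp_g\xi, Y) + g(X, \sharp_g\eta) = -\xi(Y) + \eta(X)$, which equals $-2\Omega_0(X + \xi, Y + \eta)$ by Eq. \eqref{EQUATION:CANONICALSYMPLECTICSTRUCTURE}; for $\F_g$ the sign flip gives $\xi(Y) + \eta(X) = 2\G_0(X + \xi, Y + \eta)$ by Eq. \eqref{EQUATION:CANONICALMETRIC}.

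There is no genuine obstacle here; the whole argument is a bookkeeping exercise, and the only place to exercise care is in tracking which term of $\G_g$ pairs the vector parts and which pairs the covector parts, together with the consistent use of $\flat_g\sharp_g = \sharp_g\flat_g = Id$. In particular, one should note that the apparent asymmetry of $\G_g$ between $TM$ and $T^*M$ is illusory once the musical identification is applied, which is precisely why $\J_g$ and $\F_g$ turn out isometric rather than anti-isometric, in contrast to their behaviour with respect to $\G_0$ recorded in Proposition \ref{PROPOSITION:NATURALALVARMETRIC}.
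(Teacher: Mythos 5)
Your proposal is correct and follows essentially the same route as the paper's own proof: a direct computation of $\G_g(\J_g\cdot,\J_g\cdot)$ and $\G_g(\F_g\cdot,\F_g\cdot)$ using $\sharp_g\flat_g = \flat_g\sharp_g = Id$, followed by reading off the fundamental tensors as $-\xi(Y)+\eta(X) = -2\Omega_0$ and $\xi(Y)+\eta(X) = 2\G_0$. The only cosmetic difference is that you make the appeal to Proposition \ref{PROPOSITION:NATURALALVARMETRIC} for the values of $\al$ explicit, which the paper leaves implicit.
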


\begin{proof}
We check that $\J_g, \F_g$ are isometric with respect to the generalized metric $\G_g$:
	\begin{gather*}
	  \G_g(\J_g(X + \xi), \J_g(Y + \eta)) = \G_g(-\sharp_g\xi + \flat_g X, -\sharp_g\eta + \flat_g Y) = g(-\sharp_g\xi, -\sharp_g\eta) + g(\sharp_g\flat_g X, \sharp_g\flat_g Y) = \G_g(X + \xi, Y + \eta),  \\
	  \G_g(\F_g(X + \xi), \F_g(Y + \eta)) = \G_g(\sharp_g\xi + \flat_g X, \sharp_g\eta + \flat_g Y) = g(\sharp_g\xi, \sharp_g\eta) + g(\sharp_g\flat_g X, \sharp_g\flat_g Y) = \G_g(X + \xi, Y + \eta).
	\end{gather*}
It is easy to check the expresions of the fundamental tensors:
	\begin{gather*}
	  \Phi_{\J_g}(X + \xi, Y + \eta) = \G_g(-\sharp_g\xi + \flat_g X, Y + \eta) = -g(\sharp_g\xi, Y) + g(\sharp_g\flat_g X, \sharp_g\eta) = -\xi(Y) + \eta(X) = -2\Omega_0(X + \xi, Y + \eta),  \\
	  \Phi_{\F_g}(X + \xi, Y + \eta) = \G_g(\sharp_g\xi + \flat_g X, Y + \eta) = g(\sharp_g\xi, Y) + g(\sharp_g\flat_g X, \sharp_g\eta) = \xi(Y) + \eta(X) = 2\G_0(X + \xi, Y + \eta).
	\end{gather*}
Therefore, the result is proven.
\end{proof}

This statement show an interesting connection between the canonical generalized structures and the induced generalized $(\al, \var)$-metric structures by any Riemannian or pseudo-Riemannian manifold $(M, g)$. This connection is given by the fact that $\G_g(\cdot, \cdot) = \G_0(2\F_g\cdot, \cdot)$ for any metric $g$, as it was indicated in Definition \ref{DEFINITION:INDUCEDGENERALIZEDMETRIC}.

We work now with any $(\al, \var)$-metric manifold $(M, J, g)$, with fundamental tensor $\phi(\cdot, \cdot) = g(J\cdot, \cdot)$. Then, we can induce the generalized structures $\J_{\phi}, \F_{\phi}$ from Eqs. (\ref{EQUATION:COMPLEXG}, \ref{EQUATION:PRODUCTG}) when $\al\var = +1$, and the generalized structures $\J_{\phi}, \F_{\phi}$ from Eqs. (\ref{EQUATION:COMPLEXOMEGA}, \ref{EQUATION:PRODUCTOMEGA}) when $\al\var = -1$. Their behaviour with respect to $\G_g$ is shown in the following statement.

\begin{proposition}
Let $(M, J, g)$ be an $(\al, \var)$-metric manifold with fundamental tensor $\phi(\cdot, \cdot) = g(J\cdot, \cdot)$. Then, the induced generalized structure $(\J_{\phi}, \G_g)$ is almost Hermitian or indefinite almost Hermitian (depending on $g$) when $\var = +1$, and almost Norden when $\var = -1$. On the other hand, $(\F_{\phi}, \G_g)$ is an induced generalized almost para-Norden or pseudo-Riemannian almost product structure (depending on $g$) for $\var = +1$, and a generalized almost para-Hermitian structure when $\var = -1$. When $\var = +1$, their respective fundamental tensors are
	\begin{gather*}
	  \Phi_{\J_{\phi}}(X + \xi, Y + \eta) = -2\Omega_0(JX + \xi, JY + \eta),  \\
	  \Phi_{\F_{\phi}}(X + \xi, Y + \eta) = 2\G_0(JX + \xi, JY + \eta),
	\end{gather*}
while for $\var = -1$ then
	\begin{gather*}
	  \Phi_{\J_{\phi}}(X + \xi, Y + \eta) = 2\G_0(JX + \xi, JY + \eta),  \\
	  \Phi_{\F_{\phi}}(X + \xi, Y + \eta) = -2\Omega_0(JX + \xi, JY + \eta).
	\end{gather*}
\end{proposition}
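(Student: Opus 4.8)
The plan is to reduce everything to the musical isomorphisms of $g$ by means of Proposition \ref{PROPOSITION:EQUALITIESFLATSHARP}. Recall from the discussion of the fundamental tensor that $\phi(\cdot,\cdot)=g(J\cdot,\cdot)$ is a metric when $\al\var=+1$ and a symplectic structure when $\al\var=-1$, so in either case its musical isomorphisms $\flat_\phi,\sharp_\phi$ exist and the structures $\J_\phi,\F_\phi$ are well defined. Equation \eqref{EQUATION:EQUALITIESFLATSHARP} then gives, uniformly in both cases, $\flat_\phi=\flat_g J$ and $\sharp_\phi=\al\, J\sharp_g$; in particular $\sharp_g\flat_\phi=\sharp_g\flat_g J=J$. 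These two identities are the only inputs I will need, and the fact that they hold regardless of whether $\phi$ is a metric or a symplectic form is what lets me avoid splitting the argument into separate cases for the four values of $(\al,\var)$.

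For the compatibility with $\G_g$, note that both $\J_\phi$ and $\F_\phi$ send $X+\xi$ to $\pm\sharp_\phi\xi+\flat_\phi X$, differing only in the sign of the vector part. Since $\G_g$ from Eq. \eqref{EQUATION:INDUCEDGENERALIZEDMETRIC} pairs the vector parts through $g(\cdot,\cdot)$ and the covector parts through $g(\sharp_g\cdot,\sharp_g\cdot)$, both symmetric, that sign is squared away and the two computations coincide. Substituting $\sharp_\phi=\al J\sharp_g$ and $\sharp_g\flat_\phi=J$, I would obtain the vector contribution $g(\sharp_\phi\xi,\sharp_\phi\eta)=\al^2 g(J\sharp_g\xi,J\sharp_g\eta)$ and the covector contribution $g(\sharp_g\flat_\phi X,\sharp_g\flat_\phi Y)=g(JX,JY)$; applying the compatibility $g(J\cdot,J\cdot)=\var g(\cdot,\cdot)$ and using $\al^2=1$ collapses both to a common factor $\var$, yielding $\G_g(\J_\phi(X+\xi),\J_\phi(Y+\eta))=\G_g(\F_\phi(X+\xi),\F_\phi(Y+\eta))=\var\,\G_g(X+\xi,Y+\eta)$. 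Hence each structure is isometric exactly when $\var=+1$ and anti-isometric exactly when $\var=-1$, and reading off the definitions of the $(\al,\var)$-metric structures together with the signature of $\G_g$ gives the four named types in the statement.

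For the fundamental tensors I would compute $\G_g(\J_\phi(X+\xi),Y+\eta)$ and the analogue for $\F_\phi$. Expanding with $\G_g$ produces two scalar terms, $\pm g(\sharp_\phi\xi,Y)$ and $g(\sharp_g\flat_\phi X,\sharp_g\eta)$; the second equals $g(JX,\sharp_g\eta)=\eta(JX)$, while the first, via $\sharp_\phi\xi=\al J\sharp_g\xi$ and the equivalent compatibility $g(J\cdot,\cdot)=\al\var g(\cdot,J\cdot)$, simplifies to $\pm\al\cdot\al\var\,\xi(JY)=\pm\var\,\xi(JY)$. This leaves $-\var\,\xi(JY)+\eta(JX)$ for $\J_\phi$ and $\var\,\xi(JY)+\eta(JX)$ for $\F_\phi$, which I would then recognise as $-2\Omega_0(JX+\xi,JY+\eta)$ or $2\G_0(JX+\xi,JY+\eta)$ according to the sign of $\var$, matching the four displayed formulas. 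The only real bookkeeping hazard is tracking the factor $\al$ coming from $\sharp_\phi=\al J\sharp_g$ alongside the two equivalent forms of the compatibility condition; the conceptual point, and the main thing to get right, is that $\al^2=1$ makes $\al$ invisible in the isometry factor, so that the classification is governed by $\var$ alone.
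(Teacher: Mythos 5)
Your proposal is correct and follows essentially the same route as the paper: both rest on Proposition \ref{PROPOSITION:EQUALITIESFLATSHARP} to rewrite $\flat_\phi = \flat_g J$ and $\sharp_\phi = \al J\sharp_g$ (valid whether $\phi$ is the twin metric or the fundamental symplectic structure), then verify $\G_g(\J_\phi\cdot,\J_\phi\cdot)=\G_g(\F_\phi\cdot,\F_\phi\cdot)=\var\,\G_g(\cdot,\cdot)$ and compute the fundamental tensors $\mp\var\,\xi(JY)+\eta(JX)$, identifying them with $\mp 2\Omega_0$ or $2\G_0$ evaluated at $(JX+\xi, JY+\eta)$ according to the sign of $\var$. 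The only cosmetic difference is that in the fundamental-tensor step the paper routes through $\sharp_\phi=\var\sharp_g J^*$ while you use $\sharp_\phi=\al J\sharp_g$ together with $g(J\cdot,\cdot)=\al\var\,g(\cdot,J\cdot)$, which gives the same coefficients.
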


\begin{proof}
We check first that $\J_{\phi}$ and $\F_{\phi}$ are compatible with respect to $\G_g$. In order to do that, we use Proposition \ref{PROPOSITION:EQUALITIESFLATSHARP}:
	\begin{align*}
	  \G_g(\J_{\phi}(X + \xi), \J_{\phi}(Y + \eta)) &= \G_g(-\sharp_{\phi}\xi + \flat_{\phi}X, -\sharp_{\phi}\eta + \flat_{\phi}Y) = g(-\sharp_{\phi}\xi, -\sharp_{\phi}\eta) + g(\sharp_g\flat_{\phi} X, \sharp_g\flat_{\phi} Y)  \\
	  													&= g(\al J\sharp_g\xi, \al J\sharp_g\eta) + g(\sharp_g\flat_g J X, \sharp_g\flat_g J Y) = g(J\sharp_g\xi, J\sharp_g\eta) + g(JX, JY) \\
	  															  				&= \var (g(\sharp_g\xi, \sharp_g\eta) + g(X, Y))  \\
	  															  				&= \var \mathcal G_g(X + \xi, Y + \eta),  \\
	  \G_g(\F_{\phi}(X + \xi), \F_{\phi}(Y + \eta)) &= \G_g(\sharp_{\phi}\xi + \flat_{\phi}X, \sharp_{\phi}\eta + \flat_{\phi}Y) = g(\sharp_{\phi}\xi, \sharp_{\phi}\eta) + g(\sharp_g\flat_{\phi} X, \sharp_g\flat_{\phi} Y)  \\
	  																			&= \var \mathcal G_g(X + \xi, Y + \eta).
	\end{align*}
Hence, the generalized induced $(\al, \var)$-metric structures are the indicated ones. We compute now their respective fundamental tensors:
	\begin{align*}
	  \Phi_{\J_{\phi}}(X + \xi, Y + \eta) &= \G_g(-\sharp_{\phi}\xi + \flat_{\phi} X, Y + \eta) = g(-\sharp_{\phi}\xi, Y) + g(\sharp_g\flat_{\phi} X, \sharp_g\eta) = -\var g(\sharp_g J^*\xi, Y) + g(\sharp_g\flat_g JX, \sharp_g\eta)  \\
	                                        &= -\var\xi(JY) + \eta(JX),  \\
	  \Phi_{\F_{\phi}}(X + \xi, Y + \eta) &= \G_g(\sharp_{\phi}\xi + \flat_{\phi} X, Y + \eta) = g(\sharp_{\phi}\xi, Y) + g(\sharp_g\flat_{\phi} X, \sharp_g\eta) = \var g(\sharp_g J^*\xi, Y) + g(\sharp_g\flat_g JX, \sharp_g\eta)  \\
	                                        &= \var\xi(JY) + \eta(JX).
	\end{align*}
Therefore, the result is proven.
\end{proof}

We suppose now that $(M, J, g)$ is an $(\al, \var)$-metric manifold with $\al = -1$ (that is, $J^2 = -Id$). As $J$ is an almost complex structure, we can induce the generalized almost complex structures $\J_{\lambda, J}$ from Eq. \eqref{EQUATION:COMPLEXLAMBDAJ} with $\lambda\in \{+1, -1\}$, the generalized almost complex structures $\J_{J, g, \flat}, \J_{J, g, \sharp}$ from Eq. \eqref{EQUATION:TRIANGULARMJG}, and the generalized almost paracomplex structure $\F_{J, g}$ from Eq. \eqref{EQUATION:PRODUCTJG}. Although it is easy to check that $\J_{J, g, \flat}, \J_{J, g, \sharp}$ are not compatible with the metric $\G_g$ (in fact, they are compatible with the generalized metric defined in Eq. \eqref{EQUATION:NANNICINIEXAMPLEMETRIC}, see \cite[Sec. 3]{NANNICINI2019} for the case $\var = -1$), the other structures are compatible with $\G_g$. The following propositions gather these results.

\begin{proposition}
\label{PROPOSITION:ALVARINDUCEDCOMPLEXLAMBDAJ}
Let $(M, J, g)$ be an $(\al, \var)$-metric manifold with $\al = -1$ and $\phi(\cdot, \cdot) = g(J\cdot, \cdot)$ its fundamental tensor. Then, the generalized almost complex structure $\J_{\lambda, J}$ from Eq. \eqref{EQUATION:COMPLEXLAMBDAJ}, with $\lambda\in \{+1, -1\}$, is compatible with the induced generalized metric $\G_g$. In particular, when $\var = +1$ the induced structure $(\J_{\lambda, J}, \G_g)$ is a generalized almost Hermitian or indefinite almost Hermitian structure (depending on $g$); and when $\var = -1$ the induced structure $(\J_{\lambda, J}, \G_g)$ is a generalized almost Norden structure. The fundamental tensor of $\ (\J_{\lambda, J}, \G_g)$ is
	\begin{equation*}
	  \Phi_{\J_{\lambda, J}}(X + \xi, Y + \eta) = \phi(X, Y) - \lambda\phi(\sharp_{\phi}\xi, \sharp_{\phi}\eta).
	\end{equation*}
\end{proposition}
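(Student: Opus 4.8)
The plan is to verify the two defining conditions of an $(\al, \var)$-metric structure for the pair $(\J_{\lambda, J}, \G_g)$ and then to identify the associated fundamental tensor. The whole argument rests on the definition of $\G_g$ from Eq.~\eqref{EQUATION:INDUCEDGENERALIZEDMETRIC}, the compatibility hypothesis $g(JX, JY) = \var g(X, Y)$ of the underlying manifold, and the relations between musical isomorphisms recorded in Proposition~\ref{PROPOSITION:EQUALITIESFLATSHARP}.

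First I would check the polynomial condition. Since $\J_{\lambda, J}$ is block-diagonal, squaring it reduces to $J^2 = -Id$ (which holds because $\al = -1$) together with $\lambda^2 (J^*)^2 = -Id$ (which holds because $\lambda^2 = 1$ and $(J^*)^2 = (J^2)^* = -Id$), so $\J_{\lambda, J}^2 = -Id$ and the first index is fixed as $-1$. For the metric condition, I would expand $\G_g(\J_{\lambda, J}(X + \xi), \J_{\lambda, J}(Y + \eta)) = \G_g(JX + \lambda J^*\xi, JY + \lambda J^*\eta)$ using the definition of $\G_g$ into $g(JX, JY) + \lambda^2 g(\sharp_g J^*\xi, \sharp_g J^*\eta)$. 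The first term equals $\var g(X, Y)$ by hypothesis. For the second, Proposition~\ref{PROPOSITION:EQUALITIESFLATSHARP} with $\al = -1$ gives $\sharp_g J^* = -\var J\sharp_g$; substituting and applying $g(J\cdot, J\cdot) = \var g(\cdot, \cdot)$ once more collapses it to $\var g(\sharp_g\xi, \sharp_g\eta)$. Summing yields $\var\,\G_g(X + \xi, Y + \eta)$, so the second index is exactly the $\var$ of the manifold. This gives a generalized almost Norden structure when $\var = -1$ and a generalized (indefinite) almost Hermitian structure when $\var = +1$, the Riemannian versus indefinite distinction being inherited from $g$ through $\G_g$.

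Finally, for the fundamental tensor I would compute $\Phi_{\J_{\lambda, J}}(X + \xi, Y + \eta) = \G_g(JX + \lambda J^*\xi, Y + \eta) = g(JX, Y) + \lambda\,g(\sharp_g J^*\xi, \sharp_g\eta)$. The first summand is $\phi(X, Y)$ by definition of $\phi$. The only genuinely delicate point is rewriting the second summand in terms of $\sharp_\phi$: applying $\sharp_g J^* = -\var J\sharp_g$ turns it into $-\lambda\var\,\phi(\sharp_g\xi, \sharp_g\eta)$, and then a short computation using $\sharp_\phi = -J\sharp_g$ (again from Proposition~\ref{PROPOSITION:EQUALITIESFLATSHARP}, dividing by $\var$) together with $J^2 = -Id$ and the skew relation $g(J\cdot, \cdot) = -\var\,g(\cdot, J\cdot)$ shows $\phi(\sharp_\phi\xi, \sharp_\phi\eta) = \var\,\phi(\sharp_g\xi, \sharp_g\eta)$. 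Hence the second summand is $-\lambda\,\phi(\sharp_\phi\xi, \sharp_\phi\eta)$ and the claimed expression $\Phi_{\J_{\lambda, J}}(X + \xi, Y + \eta) = \phi(X, Y) - \lambda\,\phi(\sharp_\phi\xi, \sharp_\phi\eta)$ follows. I expect the main obstacle to be purely bookkeeping: keeping track of the signs produced by $\al = -1$ and correctly converting between $\sharp_g$ and $\sharp_\phi$ via Proposition~\ref{PROPOSITION:EQUALITIESFLATSHARP}, since everything else is a direct expansion.
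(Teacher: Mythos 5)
Your proof is correct and follows essentially the same route as the paper: both expand $\G_g(\J_{\lambda,J}\cdot,\J_{\lambda,J}\cdot)$ using the block form of $\J_{\lambda,J}$, apply the identity $\sharp_g J^* = -\var J\sharp_g$ from Proposition~\ref{PROPOSITION:EQUALITIESFLATSHARP} together with $g(J\cdot,J\cdot)=\var g(\cdot,\cdot)$ to get the factor $\var$, and then compute the fundamental tensor by the same substitutions (your detour through $\phi(\sharp_g\xi,\sharp_g\eta)=\var\,\phi(\sharp_\phi\xi,\sharp_\phi\eta)$ is just a reshuffling of the paper's direct use of $\sharp_g J^*=\var\sharp_\phi$ and $\sharp_g=J\sharp_\phi$). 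The only cosmetic difference is that you also verify $\J_{\lambda,J}^2=-Id$, which the paper takes as already established.
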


\begin{proof}
First, we check the compatibility between the generalized almost complex structure $\J_{\lambda, J}$ and the generalized induced metric $\G_g$:
	\begin{align*}
	  \G_g(\J_{\lambda, J}(X + \xi), \J_{\lambda, J}(Y + \eta)) &= \G_g(JX + \lambda J^*\xi, JY + \lambda J^*\eta) = g(JX, JY) + g(\lambda\sharp_g J^*\xi, \lambda\sharp_g J^*\eta)  \\
	  													&= g(JX, JY) + g(J\sharp_g\xi, J\sharp_g\eta) = \var(g(X, Y) + g(\sharp_g\xi, \sharp_g\eta)) \\
	  															  				&= \var \G_g(X + \xi, Y + \eta).
	\end{align*}
The fundamental tensor of the structure can be easily obtained:
	\begin{align*}
	  \Phi_{\J_{\lambda, J}}(X + \xi, Y + \eta) &= \G_g(JX + \lambda J^*\xi, Y + \eta) = g(JX, Y) + \lambda g(\sharp_g J^*\xi, \sharp_g\eta) = \phi(X, Y) + \lambda\var g(\sharp_{\phi}\xi, J\sharp_{\phi}\eta)  \\
	  											&= \phi(X, Y) - \lambda g(J\sharp_{\phi}\xi, \sharp_{\phi}\eta)  \\
	  										    &= \phi(X, Y) - \lambda\phi(\sharp_{\phi}\xi, \sharp_{\phi}\eta).
	\end{align*}
Hence, we finish the proof.
\end{proof}

\begin{proposition}
\label{PROPOSITION:ALVARINDUCEDPARACOMPLEXJG}
Let $(M, J, g)$ be an $(\al, \var)$-metric manifold with $\al = -1$. Then, the generalized almost paracomplex structure $\F_{J, g}$ from Eq. \eqref{EQUATION:PRODUCTJG} is compatible with the induced generalized metric $\G_g$ if and only if $\var = -1$, that is, if $(M, J, g)$ is an almost Norden manifold. In this case, $(\F_{J, g}, \G_g)$ is an induced generalized pseudo-Riemannian almost product structure. If we denote the twin metric of the $(\al, \var)$-metric manifold $(M, J, g)$ as $\phi$, the twin metric of the induced structure $(\F_{J, g}, \G_g)$ is given by
	\begin{equation*}
	  \Phi_{\F_{J, g}}(X + \xi, Y + \eta) = 2\sqrt{2}\ \G_0(X + \xi, Y + \eta) + \phi(X, Y) + \phi(\sharp_{\phi}\xi, \sharp_{\phi}\eta).
	\end{equation*}
\end{proposition}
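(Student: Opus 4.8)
The plan is to follow the template of the earlier compatibility proofs: apply $\F_{J, g}$ to a generic section, substitute the result into $\G_g$, and use that $\G_g$ decouples into a vector-field block and a $1$-form block transported through $\sharp_g$. Writing $\F_{J, g}(X + \xi) = (JX + \sqrt{2}\,\sharp_g\xi) + (\sqrt{2}\,\flat_g X + \var J^*\xi)$, the starting point is
\[
\G_g(\F_{J, g}(X + \xi), \F_{J, g}(Y + \eta)) = g(JX + \sqrt{2}\,\sharp_g\xi,\, JY + \sqrt{2}\,\sharp_g\eta) + g\big(\sharp_g(\sqrt{2}\,\flat_g X + \var J^*\xi),\, \sharp_g(\sqrt{2}\,\flat_g Y + \var J^*\eta)\big).
\]
With $\al = -1$, Proposition \ref{PROPOSITION:EQUALITIESFLATSHARP} gives $\sharp_g\flat_g = Id$ and $\sharp_g J^* = \al\var J\sharp_g = -\var J\sharp_g$, so the $1$-form block simplifies to $g(\sqrt{2}\,X - J\sharp_g\xi,\, \sqrt{2}\,Y - J\sharp_g\eta)$.

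I would then expand both blocks, repeatedly invoking $g(J\cdot, J\cdot) = \var g(\cdot, \cdot)$ and $g(J\cdot, \cdot) = \al\var g(\cdot, J\cdot)$. The four diagonal contributions collect to $(2 + \var)\,\G_g(X + \xi, Y + \eta)$, while the four cross terms combine into $\sqrt{2}\,(1 + \var)\big(g(\sharp_g\xi, JY) - g(X, J\sharp_g\eta)\big)$, whose coefficient $1 + \var$ vanishes precisely when $\var = -1$. This yields both directions at once: if $\var = +1$ the right-hand side is $3\,\G_g$ plus a nonvanishing remainder, so it cannot equal $\pm\G_g$ and there is no compatibility; if $\var = -1$ the cross terms die and the diagonal collapses to $\G_g(X + \xi, Y + \eta)$, so $\F_{J, g}$ is isometric. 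Since $\F_{J, g}$ is almost product, $(\F_{J, g}, \G_g)$ is then a generalized pseudo-Riemannian almost product structure, i.e. an induced one with $(\al', \var') = (+1, +1)$.

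For the twin metric (which exists because $\al'\var' = +1$), I would compute $\Phi_{\F_{J, g}}(X + \xi, Y + \eta) = \G_g(\F_{J, g}(X + \xi), Y + \eta)$ with $\var = -1$ fixed throughout. The vector-field block gives $g(JX, Y) + \sqrt{2}\,g(\sharp_g\xi, Y) = \phi(X, Y) + \sqrt{2}\,\xi(Y)$, and the $1$-form block, using $\sharp_g J^* = J\sharp_g$ at $\var = -1$, gives $\sqrt{2}\,\eta(X) - \phi(\sharp_g\xi, \sharp_g\eta)$. The pair $\sqrt{2}\,\xi(Y) + \sqrt{2}\,\eta(X)$ recombines into $2\sqrt{2}\,\G_0(X + \xi, Y + \eta)$ by Definition \ref{DEFINITION:CANONICALMETRIC}, producing the first stated summand.

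The only delicate point is converting $-\phi(\sharp_g\xi, \sharp_g\eta)$ into $+\phi(\sharp_\phi\xi, \sharp_\phi\eta)$. Using $\sharp_\phi = \al J\sharp_g = -J\sharp_g$ from Proposition \ref{PROPOSITION:EQUALITIESFLATSHARP}, together with $J^2 = -Id$ and the identity $g(U, JV) = g(JU, V)$ (valid since $\al\var = +1$ here), one checks $\phi(\sharp_\phi\xi, \sharp_\phi\eta) = -\phi(\sharp_g\xi, \sharp_g\eta)$, which closes the computation. I expect the sign-tracking across the two distinct sharp isomorphisms $\sharp_g$ and $\sharp_\phi$ to be the main source of error, so I would keep $\al$ and $\var$ symbolic as long as possible and substitute $\al = -1$ (and $\var = -1$ where appropriate) only at the end of each block.
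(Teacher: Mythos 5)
Your proof is correct and takes essentially the same route as the paper's: expand $\G_g(\F_{J,g}\,\cdot\,,\F_{J,g}\,\cdot\,)$ blockwise, use Proposition \ref{PROPOSITION:EQUALITIESFLATSHARP} to collect the cross terms into a multiple of $(1+\var)$ (your form $g(\sharp_g\xi, JY)-g(X,J\sharp_g\eta)$ coincides with the paper's $\xi(JY)+\eta(JX)$ once the $(1+\var)$ prefactor is taken into account), and then compute the twin metric by the same block computation. Your explicit identity $\phi(\sharp_{\phi}\xi,\sharp_{\phi}\eta) = -\phi(\sharp_g\xi,\sharp_g\eta)$ is precisely the conversion the paper performs implicitly in its final step, so the two arguments match throughout.
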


\begin{proof}
We show here the calculations to check the compatibility between the generalized almost paracomplex structure $\F_{J, g}$ and the induced generalized metric $\G_g$, taking into account Proposition \ref{PROPOSITION:EQUALITIESFLATSHARP}:
	\begin{align*}
	  \G_g(\F_{J,g}(X + \xi), \F_{J,g}(Y + \eta)) =& \G_g(JX + \sqrt{2}\ \sharp_g\xi + \sqrt{2}\ \flat_g X + \var J^*\xi, JY + \sqrt{2}\ \sharp_g\eta + \sqrt{2}\ \flat_g Y + \var J^*\eta)  \\
	  											  =& g(JX + \sqrt{2}\ \sharp_g\xi, JY + \sqrt{2}\ \sharp_g\eta) + g(\sharp_g(\sqrt{2}\ \flat_g X + \var J^*\xi), \sharp_g(\sqrt{2}\ \flat_g Y + \var J^*\eta))  \\
	  											  =& g(JX, JY) + \sqrt{2}\ g(\sharp_g\xi, JY) + \sqrt{2}\ g(JX, \sharp_g\eta) + 2g(\sharp_g\xi, \sharp_g\eta)  \\
	  											   &+ 2g(X, Y) + \var\sqrt{2}\ g(\sharp_g J^*\xi, Y) + \var\sqrt{2}\ g(X, \sharp_g J^*\eta) + g(\sharp_g J^*\xi, \sharp_g J^*\eta)  \\
	  											  =& \sqrt{2}(1 + \var)(\xi(JY) + \eta(JX)) + (2 + \var)(g(X, Y) + g(\sharp_g\xi, \sharp_g\eta))  \\
	  											  =& \sqrt{2}(1 + \var)(\xi(JY) + \eta(JX)) + (2 + \var)\G_g(X + \xi, Y + \eta).
	\end{align*}
Therefore, the generalized almost paracomplex structure $\F_{J, g}$ is compatible with $\G_g$ if and only if $\var = -1$, that is, if $(M, J, g)$ is an almost Norden manifold. As $\G_g$ must be a generalized pseudo-Riemannian metric with neutral signature, $(\F_{J, g}, \G_g)$ is an induced generalized pseudo-Riemannian almost product structure. We compute its twin metric:
	\begin{align*}
	  \Phi_{\F_{J, g}}(X + \xi, Y + \eta) &= \G_g(JX + \sqrt{2}\ \sharp_g\xi + \sqrt{2}\ \flat_g X - J^*\xi, Y + \eta) = g(JX + \sqrt{2}\ \sharp_g\xi, Y) + g(\sharp_g(\sqrt{2}\ \flat_g X - J^*\xi), \sharp_g\eta)  \\
	  									  &= g(JX, Y) + \sqrt{2}\ g(\sharp_g\xi, Y) + \sqrt{2}\ g(X, \sharp_g\eta) - g(\sharp_g J^*\xi, \sharp_g\eta)  \\
	  									  &= \sqrt{2}(\xi(Y) + \eta(X)) + \phi(X, Y) + \phi(\sharp_{\phi}\xi, \sharp_{\phi}\eta)  \\
	  								      &= 2\sqrt{2}\ \G_0(X + \xi, Y + \eta) + \phi(X, Y) + \phi(\sharp_{\phi}\xi, \sharp_{\phi}\eta).
	\end{align*}
Therefore, the result is proven.
\end{proof}

Analogously, when $(M, F, g)$ is an $(\al, \var)$-metric manifold with $\al = +1$ (that is, $F^2 = +Id$), we can induce the generalized almost product structures $\F_{\lambda, F}$ from Eq. \eqref{EQUATION:PRODUCTLAMBDAF} with $\lambda\in \{+1, -1\}$, the generalized almost product structures $\F_{F, g, \flat}, \F_{F, g, \sharp}$ from Eq. \eqref{EQUATION:TRIANGULARMFG}, and the generalized almost complex structure $\J_{F, g}$ from Eq. \eqref{EQUATION:COMPLEXFG}. The following propositions study these structures. As their proofs are analogous to the proofs of Propositions \ref{PROPOSITION:ALVARINDUCEDCOMPLEXLAMBDAJ}, \ref{PROPOSITION:ALVARINDUCEDPARACOMPLEXJG}, they are not specified here.

\begin{proposition}
Let $(M, F, g)$ be an $(\al, \var)$-metric manifold with $\al = +1$ and $\phi(\cdot, \cdot) = g(F\cdot, \cdot)$ its fundamental tensor. Then, the generalized almost product structure $\F_{\lambda, F}$ from Eq. \eqref{EQUATION:PRODUCTLAMBDAF}, with $\lambda\in \{+1, -1\}$, is compatible with the induced generalized metric $\G_g$. In particular, when $\var = +1$ the induced structure $(\F_{\lambda, F}, \G_g)$ is a generalized Riemannian or pseudo-Riemannian almost product structure (depending on $g$); and when $\var = -1$ the induced structure $(\F_{\lambda, F}, \G_g)$ is a generalized almost para-Hermitian structure. The fundamental tensor of $(\F_{\lambda, F}, \G_g)$ is
	\begin{equation*}
	  \Phi_{\F_{\lambda, F}}(X + \xi, Y + \eta) = \phi(X, Y) + \lambda\phi(\sharp_{\phi}\xi, \sharp_{\phi}\eta).
	\end{equation*}
\end{proposition}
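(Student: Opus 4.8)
The plan is to mirror the proof of Proposition~\ref{PROPOSITION:ALVARINDUCEDCOMPLEXLAMBDAJ}, adjusting the signs to the present case $\al = +1$. First I would check the compatibility condition $\G_g(\F_{\lambda, F}(X + \xi), \F_{\lambda, F}(Y + \eta)) = \var\,\G_g(X + \xi, Y + \eta)$. Writing $\F_{\lambda, F}(X + \xi) = FX + \lambda F^*\xi$ and recalling from Definition~\ref{DEFINITION:INDUCEDGENERALIZEDMETRIC} that $\xm$ and $\dm$ are $\G_g$-orthogonal, the pairing reduces to $g(FX, FY) + \lambda^2\, g(\sharp_g F^*\xi, \sharp_g F^*\eta)$. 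The first term equals $\var\, g(X, Y)$ by the $(\al, \var)$-metric condition on $(M, F, g)$. For the second, I would invoke Proposition~\ref{PROPOSITION:EQUALITIESFLATSHARP} in the form $\sharp_g F^* = \al\var F\sharp_g = \var F\sharp_g$ (as $\al = +1$) together with $\lambda^2 = 1$, turning it into $g(F\sharp_g\xi, F\sharp_g\eta) = \var\, g(\sharp_g\xi, \sharp_g\eta)$. Summing shows the pairing equals $\var\,\G_g(X + \xi, Y + \eta)$, so $\F_{\lambda, F}$ is $\G_g$-compatible for both values of $\var$ and both values of $\lambda$.

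Given this, the classification follows directly from the definition of an $(\al, \var)$-metric structure: since $\al = +1$, the pair $(\F_{\lambda, F}, \G_g)$ is a $(+1, \var)$-metric structure, hence a generalized Riemannian or pseudo-Riemannian almost product structure when $\var = +1$ and a generalized almost para-Hermitian structure when $\var = -1$, consistent with the signature of $\G_g$ established earlier.

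Finally I would compute the fundamental tensor $\Phi_{\F_{\lambda, F}}(X + \xi, Y + \eta) = \G_g(FX + \lambda F^*\xi, Y + \eta)$. The orthogonal splitting gives $g(FX, Y) + \lambda\, g(\sharp_g F^*\xi, \sharp_g\eta)$, whose first term is $\phi(X, Y)$ by definition. For the second I would use $\sharp_g F^* = \var\sharp_\phi$ and $\sharp_g = F\sharp_\phi$ (both from Proposition~\ref{PROPOSITION:EQUALITIESFLATSHARP} with $\al = +1$, $F^2 = Id$) to rewrite it as $\lambda\var\, g(\sharp_\phi\xi, F\sharp_\phi\eta)$, and then apply the symmetry $g(X, FY) = \var\, g(FX, Y)$ to reach $\lambda\,\phi(\sharp_\phi\xi, \sharp_\phi\eta)$, the two factors of $\var$ cancelling. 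This produces $\Phi_{\F_{\lambda, F}}(X + \xi, Y + \eta) = \phi(X, Y) + \lambda\,\phi(\sharp_\phi\xi, \sharp_\phi\eta)$, as claimed. The only real obstacle is sign bookkeeping: the plus sign in front of $\lambda$ here---versus the minus sign in Proposition~\ref{PROPOSITION:ALVARINDUCEDCOMPLEXLAMBDAJ}---is exactly what the choice $\al = +1$ forces through the relation $g(X, FY) = \al\var\, g(FX, Y)$, so I would track the value of $\al$ carefully at each conversion of $\sharp_g F^*$.
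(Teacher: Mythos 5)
Your proposal is correct and is exactly the argument the paper intends: the paper omits this proof, stating it is analogous to that of Proposition~\ref{PROPOSITION:ALVARINDUCEDCOMPLEXLAMBDAJ}, and you carry out precisely that analogous computation, with the sign bookkeeping via $\sharp_g F^* = \al\var F\sharp_g$ and $g(FX,Y) = \al\var g(X,FY)$ handled correctly for $\al = +1$.
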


\begin{proposition}
Let $(M, F, g)$ be an $(\al, \var)$-metric manifold with $\al = +1$. Then, the generalized almost complex structure $\J_{F, g}$ from Eq. \eqref{EQUATION:COMPLEXFG} is compatible with the induced generalized metric $\G_g$ if and only if $\var = -1$, that is, if $(M, F, g)$ is an almost para-Hermitian manifold. In this case, $(\J_{F, g}, \G_g)$ is an induced generalized indefinite almost Hermitian structure. If we denote the fundamental symplectic structure of the $(\al, \var)$-metric manifold $(M, F, g)$ as $\phi$, the fundamental symplectic structure of $(\J_{F, g}, \G_g)$ is given by
	\begin{equation*}
	  \Phi_{\J_{F, g}}(X + \xi, Y + \eta) = -2\sqrt{2}\ \Omega_0(X + \xi, Y + \eta) + \phi(X, Y) + \phi(\sharp_{\phi}\xi, \sharp_{\phi}\eta).
	\end{equation*}
\end{proposition}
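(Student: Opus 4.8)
The plan is to follow the proof of Proposition \ref{PROPOSITION:ALVARINDUCEDPARACOMPLEXJG} almost verbatim, replacing the almost complex structure $J$ there by the almost product structure $F$ (now with $\al = +1$) and tracking the sign changes that distinguish $\J_{F, g}$ in Eq. \eqref{EQUATION:COMPLEXFG} from $\F_{J, g}$ in Eq. \eqref{EQUATION:PRODUCTJG}. First I would expand $\G_g(\J_{F, g}(X + \xi), \J_{F, g}(Y + \eta))$ by applying $\J_{F, g}$ to each argument, so that the vector part of $\J_{F, g}(X + \xi)$ is $FX - \sqrt{2}\ \sharp_g \xi$ and its form part is $\sqrt{2}\ \flat_g X - \var F^*\xi$, and then use that $\G_g$ from Eq. \eqref{EQUATION:INDUCEDGENERALIZEDMETRIC} decomposes as $g$ on the vector parts plus $g(\sharp_g\cdot, \sharp_g\cdot)$ on the form parts.

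The second step is to reduce every resulting term to an expression in $g$, $F$ and the musical isomorphisms of $g$, using the manifold compatibility $g(FX, Y) = \var g(X, FY)$ (equivalent to $g(FX, FY) = \var g(X, Y)$ since $\al = +1$) together with Proposition \ref{PROPOSITION:EQUALITIESFLATSHARP}; the key identity is $\sharp_g F^* = \var F\sharp_g$, which rewrites $\sharp_g(\sqrt{2}\ \flat_g X - \var F^*\xi)$ as $\sqrt{2}\ X - F\sharp_g\xi$. Collecting terms, I expect the symmetric part to organize into $(2 + \var)\G_g(X + \xi, Y + \eta)$ plus a leftover cross term proportional to $\sqrt{2}(1 + \var)(\eta(FX) + \xi(FY))$. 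Hence $\J_{F, g}$ is (anti-)isometric with respect to $\G_g$ exactly when this cross term vanishes, that is, iff $\var = -1$; at $\var = -1$ the scalar factor is $2 + \var = 1$, so $\J_{F, g}$ is an isometry. Since $\J_{F, g}^2 = -\mathcal{Id}$ and $\var = -1$ forces $g$, and hence $\G_g$, to be pseudo-Riemannian of neutral signature, this produces a generalized indefinite almost Hermitian structure.

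For the fundamental symplectic structure I would compute $\Phi_{\J_{F, g}}(X + \xi, Y + \eta) = \G_g(\J_{F, g}(X + \xi), Y + \eta)$ at $\var = -1$ by the same expansion, obtaining a symmetric piece $\phi(X, Y) - \phi(\sharp_g\xi, \sharp_g\eta)$, with $\phi(\cdot, \cdot) = g(F\cdot, \cdot)$, and an antisymmetric piece $\sqrt{2}(\eta(X) - \xi(Y))$, which by the definition of $\Omega_0$ in Eq. \eqref{EQUATION:CANONICALSYMPLECTICSTRUCTURE} equals $-2\sqrt{2}\ \Omega_0(X + \xi, Y + \eta)$. The one nonroutine point, and the step I would watch most closely, is the final normalization of the form-part term: using $\sharp_\phi = F\sharp_g$ (read off from Proposition \ref{PROPOSITION:EQUALITIESFLATSHARP}) and $F^2 = Id$ one gets $\phi(\sharp_\phi\xi, \sharp_\phi\eta) = g(\sharp_g\xi, F\sharp_g\eta) = \var\,\phi(\sharp_g\xi, \sharp_g\eta)$, so at $\var = -1$ the term $-\phi(\sharp_g\xi, \sharp_g\eta)$ rewrites as $+\phi(\sharp_\phi\xi, \sharp_\phi\eta)$ and the stated formula follows. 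Beyond this sign bookkeeping I do not anticipate any real obstacle: the computation is a sign-tracked mirror of Proposition \ref{PROPOSITION:ALVARINDUCEDPARACOMPLEXJG}.
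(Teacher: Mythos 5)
Your proposal is correct and is exactly the route the paper intends: the paper omits this proof, stating only that it is analogous to that of Proposition \ref{PROPOSITION:ALVARINDUCEDPARACOMPLEXJG}, and your computation is precisely that analogue with the signs tracked correctly (the cross term $-\sqrt{2}(1+\var)(\eta(FX)+\xi(FY))$ forcing $\var=-1$, and the scalar factor $2+\var=1$ giving an isometry). In particular you handle the one delicate normalization correctly: since $\sharp_\phi = F\sharp_g$ and $g(Z,FW)=\var g(FZ,W)$, at $\var=-1$ one has $-\phi(\sharp_g\xi,\sharp_g\eta)=+\phi(\sharp_\phi\xi,\sharp_\phi\eta)$, which yields the stated formula for $\Phi_{\J_{F,g}}$.
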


\section{Generalized triple structures}
\label{SECTION:GENERALIZEDTRIPLESTRUCTURES}

In this section, we want to check the interactions between the multiple generalized polynomial structures that we have presented before. Different relations of commutation or anti-commutation can be found in the examples presented in Section \ref{SECTION:GENERALIZEDALVARMETRICSTRUCTURES}, based principally on Proposition \ref{PROPOSITION:EQUALITIESFLATSHARP}. These generalized polynomial structures, following the line given in Definition \ref{DEFINITION:TRIPLESTRUCTUREVECTORBUNDLE}, conform triple structures on the generalized tangent bundle.

\begin{definition}
A triple structure $(\F, \J, \K)$ on the generalized tangent bundle $\TM$ is called a \emph{generalized triple structure}.
\end{definition}

There are four possible kinds of generalized triple structures, namely, generalized almost hypercomplex structures, generalized almost bicomplex structures, generalized almost biparacomplex structures and generalized almost hyperproduct structures.

Many of these structures appear because of the existence of the canonical generalized almost paracomplex structure $\F_0$. In the following proposition, that is straightforward to prove, we show the conditions that a generalized polynomial structure must fulfil in order to commute or anti-commute with $\F_0$.

\begin{proposition}
\label{PROPOSITION:COMMUTATIONANTICOMMUTATIONF0}
Let $\K\colon \GTM\to \GTM$ be a generalized polynomial structure such that it can be written in matrix notation as in Eq. \eqref{EQUATION:MATRIXNOTATION}, and let $\F_0$ be the natural generalized almost paracomplex structure. Then, $\K$ and $\F_0$ commute if and only if $\tau = 0$ and $\sigma = 0$. On the other hand, $\K$ and $\F_0$ anti-commute if and only if $H = 0$ and $K = 0$.
\end{proposition}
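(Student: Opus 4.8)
The plan is to reduce both statements to a direct comparison of $2\times 2$ block matrices, taking advantage of the simple diagonal shape of $\F_0$ from Eq. \eqref{EQUATION:CANONICALALMOSTPARACOMPLEXSTRUCTURE}. First I would write $\K$ in the matrix notation of Eq. \eqref{EQUATION:MATRIXNOTATION} and compute the two products explicitly. Since left multiplication by $\F_0$ changes the sign of the top row of $\K$ and right multiplication by $\F_0$ changes the sign of the left column, one obtains
\begin{equation*}
  \F_0\K =
  \left(
    \begin{array}{cc}
      -H     & -\sigma  \\
      \tau   & K
    \end{array}
  \right),
  \qquad
  \K\F_0 =
  \left(
    \begin{array}{cc}
      -H     & \sigma  \\
      -\tau  & K
    \end{array}
  \right).
\end{equation*}

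For the commutation claim I would impose $\F_0\K = \K\F_0$ and match the four blocks. The diagonal blocks agree identically, so the content lies in the off-diagonal ones: equality in position $(1,2)$ reads $-\sigma = \sigma$, forcing $\sigma = 0$, and equality in position $(2,1)$ reads $\tau = -\tau$, forcing $\tau = 0$. The converse is immediate, since $\sigma = \tau = 0$ makes the two matrices literally coincide. For the anti-commutation claim I would instead impose $\F_0\K = -\K\F_0$; now the off-diagonal blocks agree automatically while the diagonal ones give $-H = H$ and $K = -K$, that is $H = 0$ and $K = 0$, again with an obvious converse.

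There is no genuine obstacle here: the whole argument is a single block-matrix multiplication followed by an entrywise comparison, which is why the statement is advertised as straightforward. The only point requiring a moment's care is to observe that in each case exactly two of the four block equations are vacuous --- the diagonal pair for commutation and the off-diagonal pair for anti-commutation --- so that the surviving equations are precisely the asserted conditions and no hidden constraints on the remaining blocks are secretly introduced.
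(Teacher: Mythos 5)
Your computation is correct and is precisely the straightforward block-matrix verification the paper alludes to (the paper states the proposition without proof, calling it straightforward): $\F_0\K$ negates the top row of $\K$, $\K\F_0$ negates the left column, and comparing blocks yields $\sigma = \tau = 0$ for commutation and $H = K = 0$ for anti-commutation. Nothing is missing; your remark that exactly two of the four block equations are vacuous in each case is the right way to see that no hidden constraints arise.
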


Proposition \ref{PROPOSITION:COMMUTATIONANTICOMMUTATIONF0} can be used in order to find generalized triple structures that involve the generalized polynomial structures shown in this document.

\begin{corollary}
Let $(M, g)$ be a Riemannian or pseudo-Riemannian manifold. Then, the structures $\J_g$ and $\F_g$ from Eqs. \emph{(\ref{EQUATION:COMPLEXG}, \ref{EQUATION:PRODUCTG})} anti-commute with the natural generalized almost paracomplex structure $\F_0$. Specifically, we have that $\J_g = \F_0 \F_g$, so $(\F_0, \F_g, \J_g)$ is a generalized almost biparacomplex structure.
\end{corollary}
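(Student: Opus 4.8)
The plan is to read off the matrix forms of the three structures and then let Proposition \ref{PROPOSITION:COMMUTATIONANTICOMMUTATIONF0} together with a single matrix product do all the work. First I would observe that in the block notation of Eq.~\eqref{EQUATION:MATRIXNOTATION} both $\J_g$ from Eq.~\eqref{EQUATION:COMPLEXG} and $\F_g$ from Eq.~\eqref{EQUATION:PRODUCTG} are purely off-diagonal, so their diagonal blocks vanish: $H = 0$ and $K = 0$. This is exactly the condition isolated in the anti-commutation half of Proposition \ref{PROPOSITION:COMMUTATIONANTICOMMUTATIONF0}, and so both $\J_g$ and $\F_g$ anti-commute with $\F_0$, which is the first assertion.

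Next I would compute the product $\F_0\F_g$ directly. Since $\F_0$ is the diagonal endomorphism sending $X + \xi$ to $-X + \xi$, left-multiplying the off-diagonal $\F_g$ by it simply changes the sign of the top block, turning $\sharp_g$ into $-\sharp_g$ while leaving $\flat_g$ untouched; the outcome is precisely the matrix of $\J_g$ in Eq.~\eqref{EQUATION:COMPLEXG}. This proves $\J_g = \F_0\F_g$. Setting $F := \F_0$, $J := \F_g$ and $K := \J_g$, we then have $K = FJ$ with $F$ and $J$ anti-commuting, which is exactly the data of a triple structure in the sense of Definition \ref{DEFINITION:TRIPLESTRUCTUREVECTORBUNDLE}.

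It remains to identify which of the four types of triple structure arises, and for this I would check the minimal-polynomial conditions. Using $\sharp_g\flat_g = Id$ and $\flat_g\sharp_g = Id$, squaring the two off-diagonal matrices gives $\F_g^2 = Id$ and $\J_g^2 = -Id$, while $\F_0^2 = Id$ is immediate. Hence $F, J$ are almost paracomplex (square $+Id$) and $K$ is almost complex (square $-Id$); together with $K = FJ$ and the anti-commutation $FJ = -JF$ this yields $F^2 = J^2 = -K^2 = -FJK = +Id$, and the pairwise anti-commutation of all three pairs follows formally (for instance $KF = FJF = -F^2J = -J = -FK$). This is precisely the biparacomplex pattern of Definition \ref{DEFINITION:TRIPLESTRUCTUREVECTORBUNDLE}.

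There is no genuine obstacle here: every step is either a direct $2\times 2$ block computation or an appeal to the already-established Proposition \ref{PROPOSITION:COMMUTATIONANTICOMMUTATIONF0}. The only points that call for a little care are keeping the sign right when multiplying by $\F_0$, and matching the ordered triple $(\F_0, \F_g, \J_g)$ to the roles $(F, J, K)$ so that the almost-complex member $\J_g$ occupies the $K$-slot.
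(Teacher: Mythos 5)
Your proof is correct and follows exactly the route the paper intends: the corollary is stated as an immediate consequence of Proposition \ref{PROPOSITION:COMMUTATIONANTICOMMUTATIONF0} (both $\J_g$ and $\F_g$ are purely off-diagonal, hence anti-commute with $\F_0$), combined with the one-line block computation $\F_0\F_g = \J_g$ and the squares $\F_g^2 = \mathcal{I}d$, $\J_g^2 = -\mathcal{I}d$ coming from $\sharp_g\flat_g = \flat_g\sharp_g = Id$. The verification of the biparacomplex pattern and the formal derivation of the remaining anti-commutations are exactly the routine checks the paper leaves implicit.
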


\begin{corollary}
Let $(M, \omega)$ be an almost symplectic manifold. Then, $\J_{\omega}$ and $\F_{\omega}$  from Eqs. \emph{(\ref{EQUATION:COMPLEXOMEGA}, \ref{EQUATION:PRODUCTOMEGA})} anticommute with the natural generalized almost paracomplex structure $\F_0$. Specifically, we have that $\J_{\omega} = \F_0 \F_{\omega}$, so $(\F_0, \F_{\omega}, \J_{\omega})$ is a generalized almost biparacomplex structure.
\end{corollary}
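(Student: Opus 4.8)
The plan is to reduce everything to Proposition \ref{PROPOSITION:COMMUTATIONANTICOMMUTATIONF0} together with the polynomial identities already recorded in Proposition \ref{PROPOSITION:NATURALALVARSYMPLECTIC}, so that the only genuine computation is a single block product; the argument runs exactly parallel to the immediately preceding corollary for a (pseudo-)Riemannian $(M, g)$. First I would observe that, in the matrix notation of Eq. \eqref{EQUATION:MATRIXNOTATION}, both $\J_{\omega}$ from Eq. \eqref{EQUATION:COMPLEXOMEGA} and $\F_{\omega}$ from Eq. \eqref{EQUATION:PRODUCTOMEGA} are purely off-diagonal, i.e.\ their $H$- and $K$-blocks vanish. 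By Proposition \ref{PROPOSITION:COMMUTATIONANTICOMMUTATIONF0} this is precisely the condition for a generalized polynomial structure to anti-commute with $\F_0$, so both $\J_{\omega}$ and $\F_{\omega}$ anti-commute with $\F_0$.

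Next I would establish the relation $\J_{\omega} = \F_0 \F_{\omega}$ by a direct block multiplication, using Eq. \eqref{EQUATION:CANONICALALMOSTPARACOMPLEXSTRUCTURE} for $\F_0$:
\[
  \F_0 \F_{\omega} = \left(\begin{array}{cc} -Id & 0 \\ 0 & Id \end{array}\right)\left(\begin{array}{cc} 0 & \sharp_{\omega} \\ \flat_{\omega} & 0 \end{array}\right) = \left(\begin{array}{cc} 0 & -\sharp_{\omega} \\ \flat_{\omega} & 0 \end{array}\right) = \J_{\omega}.
\]
In particular $\J_{\omega}$ is the product $FJ$ with $F := \F_0$ and $J := \F_{\omega}$.

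Finally I would collect the triple-structure data against Definition \ref{DEFINITION:TRIPLESTRUCTUREVECTORBUNDLE}. The structure $\F_0$ is almost paracomplex by Definition \ref{DEFINITION:CANONICALALMOSTPARACOMPLEXSTRUCTURE}, and Proposition \ref{PROPOSITION:NATURALALVARSYMPLECTIC} already records that $\F_{\omega}$ is almost paracomplex ($\F_{\omega}^2 = +Id$) while $\J_{\omega}$ is almost complex ($\J_{\omega}^2 = -Id$). Together with the anti-commutativity from the first step and the identity $\J_{\omega} = \F_0 \F_{\omega}$ from the second, this matches exactly the entry for a generalized almost biparacomplex structure in Definition \ref{DEFINITION:TRIPLESTRUCTUREVECTORBUNDLE}: two anti-commuting almost paracomplex structures whose product is almost complex. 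Hence $(\F_0, \F_{\omega}, \J_{\omega})$ is a generalized almost biparacomplex structure.

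There is essentially no analytic obstacle here, since every polynomial identity is inherited from earlier results; the only point requiring care is the bookkeeping, namely assigning the roles $F = \F_0$, $J = \F_{\omega}$, $K = \J_{\omega}$ consistently and checking that pairwise anti-commutativity of the \emph{whole} triple (not merely of $\F_{\omega}$ with $\F_0$) follows formally from $K = FJ$, $F^2 = J^2 = Id$ and $FJ = -JF$. This last formal step is the one most easily overlooked, but it reduces to the one-line computations $FK = F^2 J = J = -KF$ and $JK = -FJ^2 = -F = -KJ$, which also yield $FJK = K^2 = -Id$ as required.
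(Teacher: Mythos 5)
Your proposal is correct and follows essentially the same route the paper intends: the corollary is stated as a direct consequence of Proposition \ref{PROPOSITION:COMMUTATIONANTICOMMUTATIONF0} (both $\J_{\omega}$ and $\F_{\omega}$ are purely off-diagonal, hence anti-commute with $\F_0$), combined with the one-line block multiplication $\F_0\F_{\omega} = \J_{\omega}$ and the polynomial identities from Proposition \ref{PROPOSITION:NATURALALVARSYMPLECTIC}, which is exactly what you do. Your closing verification that pairwise anti-commutativity of the full triple follows formally from $K = FJ$, $F^2 = J^2 = Id$ and $FJ = -JF$ is a nice touch of rigour that the paper leaves implicit in Definition \ref{DEFINITION:TRIPLESTRUCTUREVECTORBUNDLE}.
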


\begin{corollary}
Let $(M, J)$ be an almost complex manifold. Then, the generalized almost complex structures $\J_{\lambda, J}$ for $\lambda\in \{+1, -1\}$ from Eq. \eqref{EQUATION:COMPLEXLAMBDAJ} commute with the natural generalized almost paracomplex structure $\F_0$. Specifically, we have that $\F_0 = \J_{+1, J} \J_{-1, J}$, so $(\J_{+1, J}, \J_{-1, J}, \F_0)$ is a generalized almost bicomplex structure.
\end{corollary}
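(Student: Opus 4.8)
The plan is to reduce everything to the block-diagonal form of the structures involved together with a single block-matrix multiplication. First I would note that $\J_{\lambda, J}$ from Eq. \eqref{EQUATION:COMPLEXLAMBDAJ} is block-diagonal, i.e. its off-diagonal blocks vanish ($\sigma = 0$ and $\tau = 0$ in the notation of Eq. \eqref{EQUATION:MATRIXNOTATION}). Hence Proposition \ref{PROPOSITION:COMMUTATIONANTICOMMUTATIONF0} applies verbatim and gives at once that both $\J_{+1, J}$ and $\J_{-1, J}$ commute with $\F_0$. Since these two structures are themselves block-diagonal with the same operator (up to the scalar $\lambda$) in each block, they commute with one another as well; thus the three endomorphisms commute pairwise, which is the commutation requirement for an almost bicomplex structure in Definition \ref{DEFINITION:TRIPLESTRUCTUREVECTORBUNDLE}.

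Next I would establish the product relation by multiplying the two block-diagonal matrices. The top-left entry of $\J_{+1, J}\J_{-1, J}$ is $J^2$ and the bottom-right entry is $J^*(-J^*) = -(J^*)^2$. Using that $J$ is almost complex, so $J^2 = -Id$, and that $J^*$ has the same minimal polynomial as $J$ (as observed after Definition \ref{DEFINITION:POLYNOMIALSTRUCTUREVECTORBUNDLE}, whence $(J^*)^2 = -Id$ too), the product collapses to $\mathrm{diag}(-Id, Id)$, which is exactly $\F_0$ from Eq. \eqref{EQUATION:CANONICALALMOSTPARACOMPLEXSTRUCTURE}. This is the asserted identity $\F_0 = \J_{+1, J}\J_{-1, J}$, i.e. $K = FJ$ with $F = \J_{+1, J}$, $J = \J_{-1, J}$ and $K = \F_0$.

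Finally I would verify the defining identities $F^2 = J^2 = -K^2 = -FJK = -Id$ of Definition \ref{DEFINITION:TRIPLESTRUCTUREVECTORBUNDLE}. The same block computation gives $(\J_{\pm 1, J})^2 = \mathrm{diag}(J^2, (J^*)^2) = -Id$, so both $\J_{+1, J}$ and $\J_{-1, J}$ are generalized almost complex structures, while $\F_0^2 = Id$ makes $\F_0$ a generalized almost product structure (indeed paracomplex, since $TM$ and $T^*M$ are its equidimensional eigenbundles), so $-K^2 = -Id$. The last identity is immediate from the product relation: $FJK = (\J_{+1, J}\J_{-1, J})\F_0 = \F_0^2 = Id$, hence $-FJK = -Id$. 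This is precisely a generalized almost bicomplex structure. I do not expect any genuine obstacle here, as the result is a direct consequence of Proposition \ref{PROPOSITION:COMMUTATIONANTICOMMUTATIONF0} and one block multiplication; the only non-mechanical input is the passage $(J^*)^2 = -Id$, which I would justify by invoking the equality of minimal polynomials of $J$ and $J^*$.
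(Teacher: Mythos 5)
Your proposal is correct and follows exactly the route the paper intends: the corollary is stated as a consequence of Proposition \ref{PROPOSITION:COMMUTATIONANTICOMMUTATIONF0} (applicable since $\J_{\lambda, J}$ has vanishing off-diagonal blocks), combined with the one-line block multiplication $\J_{+1, J}\J_{-1, J} = \mathrm{diag}(J^2, -(J^*)^2) = \F_0$. Your justification of $(J^*)^2 = -Id$ via the equality of minimal polynomials of $J$ and $J^*$ is precisely the observation the paper makes after Definition \ref{DEFINITION:POLYNOMIALSTRUCTUREVECTORBUNDLE}, so nothing is missing.
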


\begin{corollary}
Let $(M, F)$ be an almost product manifold. Then, the generalized almost product structures $\F_{\lambda, F}$ for $\lambda\in \{+1, -1\}$ from Eq. \eqref{EQUATION:PRODUCTLAMBDAF} commute with the natural generalized almost paracomplex structure $\F_0$. Specifically, we have that $-\F_0 = \F_{+1, F} \F_{-1, F}$, so $(\F_{+1, F}, \F_{-1, F}, -\F_0)$ is a generalized almost hyperproduct structure.
\end{corollary}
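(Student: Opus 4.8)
The plan is to verify the three defining conditions of a generalized almost hyperproduct structure from Definition \ref{DEFINITION:TRIPLESTRUCTUREVECTORBUNDLE}: that each of $\F_{+1, F}$, $\F_{-1, F}$ and $-\F_0$ is an almost product structure (the first two already being so by the proposition that introduces Eq. \eqref{EQUATION:PRODUCTLAMBDAF}), that the three commute pairwise, and that $-\F_0$ equals the composition $\F_{+1, F}\F_{-1, F}$.

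First I would dispatch the commutation with $\F_0$ by appealing directly to Proposition \ref{PROPOSITION:COMMUTATIONANTICOMMUTATIONF0}. In the matrix notation of Eq. \eqref{EQUATION:MATRIXNOTATION}, the structure $\F_{\lambda, F}$ of Eq. \eqref{EQUATION:PRODUCTLAMBDAF} is block-diagonal, so its off-diagonal blocks $\sigma$ and $\tau$ both vanish; this is exactly the stated criterion for $\F_{\lambda, F}$ to commute with $\F_0$, hence with $-\F_0$, for each $\lambda \in \{+1, -1\}$.

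Next I would compute the product $\F_{+1, F}\F_{-1, F}$ by block-diagonal multiplication. Using that $F^2 = Id$ and that the dual structure $F^*$ carries the same minimal polynomial as $F$ (so that $(F^*)^2 = Id$ as well), one obtains
\begin{equation*}
  \F_{+1, F}\F_{-1, F} =
  \left(
    \begin{array}{cc}
      F^2  & 0         \\
      0    & -(F^*)^2
    \end{array}
  \right) =
  \left(
    \begin{array}{cc}
      Id  & 0    \\
      0   & -Id
    \end{array}
  \right) = -\F_0 .
\end{equation*}
Reversing the order of the factors produces the identical matrix, so $\F_{+1, F}$ and $\F_{-1, F}$ also commute with each other; combined with the previous step this yields pairwise commutation of the whole triple. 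Finally, since $(-\F_0)^2 = \F_0^2 = Id$ the structure $-\F_0$ is again almost product, and writing it as $\F_{+1, F}\F_{-1, F}$ forces $\F_{+1, F}\F_{-1, F}(-\F_0) = (-\F_0)^2 = Id$. Thus all the defining equalities $F^2 = J^2 = K^2 = FJK = +Id$ of a generalized almost hyperproduct structure are satisfied.

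The argument is a short direct computation; the only step needing a moment's care is the identity $(F^*)^2 = Id$, which is immediate from the remark in the preliminaries that a polynomial structure and its dual share their minimal polynomial. I therefore anticipate no genuine obstacle.
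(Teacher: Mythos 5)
Your proof is correct and follows exactly the route the paper intends: the corollary is stated there as an immediate consequence of Proposition \ref{PROPOSITION:COMMUTATIONANTICOMMUTATIONF0} (block-diagonal form $\Rightarrow$ commutation with $\F_0$) together with the same block-diagonal computation $\F_{+1,F}\F_{-1,F} = -\F_0$ using $F^2 = (F^*)^2 = Id$. No gaps; your verification of $(-\F_0)^2 = Id$ and of the product identity $\F_{+1,F}\F_{-1,F}(-\F_0) = Id$ completes all conditions of Definition \ref{DEFINITION:TRIPLESTRUCTUREVECTORBUNDLE}.
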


We consider now a richer geometry within the base manifold. First, we take an $(\al, \var)$-metric manifold $(M, J, g)$ with $\al = -1$. We can work with many induced generalized polynomial structures: the ones induced by $J$, by the metric $g$ and by its fundamental tensor $\phi(\cdot, \cdot) = g(J\cdot, \cdot)$. We check first how the generalized polynomial structures $\J_{\lambda, J}$ interact with $\J_g$, keeping in mind Proposition \ref{PROPOSITION:EQUALITIESFLATSHARP}:
	\begin{gather*}
	  \J_{\lambda, J} \J_g = \left(
        \begin{array}{cc}
          J  & 0  \\
          0  & \lambda J^*
        \end{array}
      \right) \left(
        \begin{array}{cc}
          0        & -\sharp_g  \\
          \flat_g  & 0
        \end{array}
      \right) = \left(
        \begin{array}{cc}
          0        & -J\sharp_g  \\
          \lambda J^*\flat_g  & 0
        \end{array}
      \right) = \left(
        \begin{array}{cc}
          0        & \sharp_{\phi}  \\
          -\lambda\var\flat_{\phi}  & 0
        \end{array}
      \right),  \\
	  \J_g \J_{\lambda, J} = \left(
        \begin{array}{cc}
          0        & -\sharp_g  \\
          \flat_g  & 0
        \end{array}
      \right) \left(
        \begin{array}{cc}
          J  & 0  \\
          0  & \lambda J^*
        \end{array}
      \right) = \left(
        \begin{array}{cc}
          0        & -\lambda\sharp_g J^*  \\
          \flat_g J  & 0
        \end{array}
      \right) = \left(
        \begin{array}{cc}
          0        & -\lambda\var\sharp_{\phi}  \\
          \flat_{\phi}  & 0
        \end{array}
      \right).	
	\end{gather*}
We do the same for $\J_{\lambda, J}$ and $\F_g$:
	\begin{gather*}
	  \J_{\lambda, J} \F_g = \left(
        \begin{array}{cc}
          J  & 0  \\
          0  & \lambda J^*
        \end{array}
      \right) \left(
        \begin{array}{cc}
          0        & \sharp_g  \\
          \flat_g  & 0
        \end{array}
      \right) = \left(
        \begin{array}{cc}
          0        & J\sharp_g  \\
          \lambda J^*\flat_g  & 0
        \end{array}
      \right) = \left(
        \begin{array}{cc}
          0        & -\sharp_{\phi}  \\
          -\lambda\var\flat_{\phi}  & 0
        \end{array}
      \right),  \\
	  \F_g \J_{\lambda, J} = \left(
        \begin{array}{cc}
          0        & \sharp_g  \\
          \flat_g  & 0
        \end{array}
      \right) \left(
        \begin{array}{cc}
          J  & 0  \\
          0  & \lambda J^*
        \end{array}
      \right) = \left(
        \begin{array}{cc}
          0        & \lambda\sharp_g J^*  \\
          \flat_g J  & 0
        \end{array}
      \right) = \left(
        \begin{array}{cc}
          0        & \lambda\var\sharp_{\phi}  \\
          \flat_{\phi}  & 0
        \end{array}
      \right).	
	\end{gather*}
Then, we state the following result that combine all these calculations.
\begin{proposition}
\label{PROPOSITION:TRIPLEMJG}
Let $(M, J, g)$ be an $(\al, \var)$-metric manifold with $\al = -1$, and let $\phi(\cdot, \cdot) = g(J\cdot, \cdot)$ be its fundamental tensor. Then, we find the following generalized triple structures: a generalized almost hypercomplex structure $(\J_g, \J_{\var, J}, \J_{\phi})$; a generalized almost bicomplex structure $(\J_g, \J_{-\var, J}, \F_{\phi})$; other generalized almost bicomplex structure $(\J_{\phi}, \J_{-\var, J}, -\F_g)$; and a generalized almost biparacomplex structure $(\F_g, \F_{\phi}, \J_{\var, J})$.
\end{proposition}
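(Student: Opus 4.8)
The plan is to treat each of the four claimed triple structures by checking, in turn, the three defining conditions of Definition \ref{DEFINITION:TRIPLESTRUCTUREVECTORBUNDLE}: that each of the three constituent endomorphisms has the stated polynomial type, that the product relation $\K = \F\J$ holds (in the ordering listed), and that the required commutation or anti-commutation relation is satisfied. The polynomial types require no new work, since earlier propositions already establish that $\J_g$, $\J_{\lambda, J}$ and $\J_\phi$ square to $-\mathcal Id$ while $\F_g$ and $\F_\phi$ square to $+\mathcal Id$ (so $-\F_g$ is again almost product); thus the substance lies entirely in the product and (anti-)commutation identities.

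For the first two structures I would simply read off the four block products $\J_{\lambda, J}\J_g$, $\J_g\J_{\lambda, J}$, $\J_{\lambda, J}\F_g$, $\F_g\J_{\lambda, J}$ already displayed in the lead-up to the statement. Specializing $\lambda = \var$ (so $\lambda\var = +1$) gives $\J_g\J_{\var, J} = \J_\phi = -\J_{\var, J}\J_g$, which together with the three squaring to $-\mathcal Id$ yields the hypercomplex structure $(\J_g, \J_{\var, J}, \J_\phi)$; specializing $\lambda = -\var$ (so $\lambda\var = -1$) gives $\J_g\J_{-\var, J} = \F_\phi = \J_{-\var, J}\J_g$, which yields the bicomplex structure $(\J_g, \J_{-\var, J}, \F_\phi)$. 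The recurring simplification is $\var^2 = 1$, which collapses the $\lambda\var$ sign factors sitting in the off-diagonal entries.

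The last two structures involve products of $\J_\phi$ or $\F_\phi$ that are not precomputed, so I would carry out the $2\times 2$ block multiplications directly and reduce the resulting off-diagonal morphisms with Proposition \ref{PROPOSITION:EQUALITIESFLATSHARP} specialized to $\al = -1$, namely $\flat_\phi = \flat_g J = -\var J^*\flat_g$ and $\sharp_\phi = \var\sharp_g J^* = -J\sharp_g$. Using $J^2 = -Id$ and $(J^*)^2 = -Id$, the needed reductions are $\flat_\phi J = \flat_g J^2 = -\flat_g$, $\sharp_\phi J^* = \var\sharp_g (J^*)^2 = -\var\sharp_g$, together with the cancellations $\sharp_g\flat_g = \flat_g\sharp_g = Id$. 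These give $\J_\phi\J_{-\var, J} = \J_{-\var, J}\J_\phi = -\F_g$, producing the bicomplex structure $(\J_\phi, \J_{-\var, J}, -\F_g)$, and $\F_g\F_\phi = -\F_\phi\F_g = \J_{\var, J}$, producing the biparacomplex structure $(\F_g, \F_\phi, \J_{\var, J})$. (As a shortcut one may instead note $\J_{\var, J}\J_{-\var, J} = \F_0$ and $\J_g\F_0 = -\F_g$, recovering the third identity via $\J_\phi = \J_g\J_{\var, J}$.)

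I expect the main obstacle to be sign and index bookkeeping rather than any conceptual difficulty: each product mixes $J$ with $J^*$ and $\flat$ with $\sharp$, and Proposition \ref{PROPOSITION:EQUALITIESFLATSHARP} supplies two equivalent expressions for each of $\flat_\phi$ and $\sharp_\phi$, so one must choose the form in each block that cancels cleanly against the adjacent musical isomorphism while correctly tracking the interacting signs $\al = -1$, $\var$ and $\lambda$. Once every off-diagonal entry has been rewritten as a single musical isomorphism composed with $J$ or $J^*$, the identification of each product with $\pm\F_g$ or $\J_{\var, J}$ is immediate, and comparing the two multiplication orders delivers the commutation or anti-commutation, completing all four cases.
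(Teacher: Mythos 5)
Your proposal is correct and follows essentially the same route as the paper: the paper proves this proposition precisely by the block-matrix products $\J_{\lambda,J}\J_g$, $\J_g\J_{\lambda,J}$, $\J_{\lambda,J}\F_g$, $\F_g\J_{\lambda,J}$ displayed before the statement, reduced via Proposition \ref{PROPOSITION:EQUALITIESFLATSHARP} with $\al=-1$, and then specialized to $\lambda=\pm\var$ exactly as you do. Your sign bookkeeping ($\J_g\J_{\var,J}=\J_\phi=-\J_{\var,J}\J_g$, $\J_g\J_{-\var,J}=\F_\phi=\J_{-\var,J}\J_g$, $\J_\phi\J_{-\var,J}=\J_{-\var,J}\J_\phi=-\F_g$, $\F_g\F_\phi=-\F_\phi\F_g=\J_{\var,J}$) checks out, so nothing is missing.
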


One interesting thing to comment is the relation between the generalized almost biparacomplex structure $(\F_g, \F_{\phi}, \J_{\var, J})$ and the induced structure $\F_{J, g}$ from Eq. \eqref{EQUATION:PRODUCTJG}: as all the structures in $(\F_g, \F_{\phi}, \J_{\var, J})$ anti-commute, every combination in the form $a\F_g + b\F_{\phi} + c\J_{\var, J}$ for $a, b, c\in \fm$ such that $-a^2 - b^2 + c^2 = 1$ is a generalized almost complex structure, whilst if  $a^2 + b^2 - c^2 = 1$ then the combination is a generalized almost product structure. It can be easily seen that $\F_{J, g}$ is the combination of $\F_g, \F_{\phi}, \J_{\var, J}$ for the values $a = \sqrt{2}, b = 0, c = 1$, that is, $\F_{J, g} = \sqrt{2}\F_g + \J_{\var, J}$.

Analogous calculations can be done for an $(\al, \var)$-metric manifold $(M, F, g)$ with $\al = +1$ (that is, $F$ is almost product). We resume those calculations in the following proposition.

\begin{proposition}
Let $(M, F, g)$ be an $(\al, \var)$-metric manifold with $\al = +1$, and let $\phi(\cdot, \cdot) = g(F\cdot, \cdot)$ be its fundamental tensor. Then, we can induce the following generalized triple structures: a generalized almost hyperproduct structure $(\F_g, \F_{\var, F}, \F_{\phi})$; a generalized almost biparacomplex structure $(\F_g, \F_{-\var, F}, \J_{\phi})$; one more generalized almost biparacomplex structure $(\F_{\phi}, \F_{-\var, F}, \J_g)$; and a generalized almost bicomplex structure $(\J_g, \J_{\phi}, -\F_{\var, F})$.
\end{proposition}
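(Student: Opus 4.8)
The plan is to reproduce, for the almost product case $\al = +1$, the block of matrix computations displayed just before Proposition~\ref{PROPOSITION:TRIPLEMJG}. The only algebraic inputs are the identities of Proposition~\ref{PROPOSITION:EQUALITIESFLATSHARP} specialised to $\al = +1$, namely $\flat_{\phi} = \flat_g F = \var F^*\flat_g$ and $\var\sharp_{\phi} = \sharp_g F^* = \var F\sharp_g$, equivalently $F\sharp_g = \sharp_{\phi}$, $F^*\flat_g = \var\flat_{\phi}$, $\sharp_g F^* = \var\sharp_{\phi}$ and $\flat_g F = \flat_{\phi}$. These let me rewrite every composition of $F$ or $F^*$ with a musical isomorphism of $g$ as a musical isomorphism of $\phi$, up to a factor $\var$.

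Concretely I would compute, for $\lambda\in\{+1,-1\}$, the four products $\F_{\lambda, F}\F_g$, $\F_g\F_{\lambda, F}$, $\F_{\lambda, F}\J_g$, $\J_g\F_{\lambda, F}$, exactly as the four products involving $\J_{\lambda, J}$ were computed in the cited block. Each comes out off-diagonal with entries $\pm\sharp_{\phi}$, $\pm\flat_{\phi}$, the relevant signs being controlled by the single factor $\lambda\var$; reading them off shows at once whether $\F_{\lambda, F}$ commutes or anti-commutes with $\F_g$, resp.\ $\J_g$, and identifies the products $\F_{\phi} = \F_g\F_{\var, F}$, $\J_{\phi} = \F_g\F_{-\var, F}$ and $\J_g = \F_{\phi}\F_{-\var, F}$. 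For the fourth triple I would additionally compute the diagonal product $\J_g\J_{\phi}$, which by the same substitutions is the diagonal endomorphism with blocks $-F$ and $-\var F^*$, that is $-\F_{\var, F}$.

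With these identities in hand I would then check that each of the four prospective triples satisfies Definition~\ref{DEFINITION:TRIPLESTRUCTUREVECTORBUNDLE}. The type of every factor is already established in the earlier propositions ($\F_g, \F_{\phi}, \F_{\lambda, F}$ are almost product, $\J_g, \J_{\phi}$ almost complex), so it remains only to record, in each case, the commutation or anti-commutation relation and the product relation $\K = \F\J$: that $(\F_g, \F_{\var, F}, \F_{\phi})$ is almost hyperproduct (three commuting almost product structures with $\F_{\phi} = \F_g\F_{\var, F}$); that $(\F_g, \F_{-\var, F}, \J_{\phi})$ and $(\F_{\phi}, \F_{-\var, F}, \J_g)$ are almost biparacomplex (two anti-commuting almost product structures whose product is the almost complex $\J_{\phi}$, resp.\ $\J_g$); and that $(\J_g, \J_{\phi}, -\F_{\var, F})$ is almost bicomplex (two commuting almost complex structures whose product is the almost product $-\F_{\var, F}$). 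For each involution pair I can read anti-commutation off the computed products directly, or note that two involutions anti-commute exactly when their product squares to $-\mathcal{Id}$, which here holds since that product is $\J_{\phi}$, resp.\ $\J_g$.

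I do not anticipate a genuine obstacle: the argument is purely computational and formally dual to the already displayed $\al = -1$ case under the exchange of almost complex and almost product factors. The only points demanding care are the bookkeeping of the signs carried by $\var$ and $\lambda$, and respecting the ordering convention $\K = \F\J$ of Definition~\ref{DEFINITION:TRIPLESTRUCTUREVECTORBUNDLE}, which is precisely what forces the minus sign in $-\F_{\var, F}$ and fixes which factor plays the role of $\F$ and which of $\J$ in each biparacomplex triple.
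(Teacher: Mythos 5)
Your proposal is correct and takes essentially the same route as the paper: the paper gives no separate proof of this proposition, stating only that it follows from calculations ``analogous'' to the $\al = -1$ block displayed before Proposition~\ref{PROPOSITION:TRIPLEMJG}, and your plan carries out exactly those calculations, using the $\al = +1$ specialisation of Proposition~\ref{PROPOSITION:EQUALITIESFLATSHARP} to reduce every product to off-diagonal matrices in $\pm\sharp_{\phi}, \pm\flat_{\phi}$ (or, for $\J_g\J_{\phi}$, to the diagonal $-\F_{\var, F}$) with the signs governed by $\lambda\var$. Your sign bookkeeping and the identifications $\F_{\phi} = \F_g\F_{\var, F}$, $\J_{\phi} = \F_g\F_{-\var, F}$, $\J_g = \F_{\phi}\F_{-\var, F}$, $\J_g\J_{\phi} = -\F_{\var, F}$ all check out against the stated triples.
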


As for the case of the structure $\F_{J, g}$, the behaviour of the generalized almost complex morphism $\J_{F, g}$ from Eq. \eqref{EQUATION:COMPLEXFG} is determined by the generalized almost biparacomplex structure $(\F_{\phi}, \F_{-\var, F}, \J_g)$. This three morphisms anti-commute, so a combination like $a\F_{\phi} + b\F_{-\var, F} + c\J_g$ for $a, b, c\in \fm$ behaves as a generalized almost product structure if $a^2 + b^2 - c^2 = 1$, and as an almost complex structure if $-a^2 - b^2 + c^2 = 1$. Then, the morphism $\J_{F, g}$ can be expressed as $\J_{F, g} = \sqrt{2}\J_g + \F_{-\var, F}$.

To end, we comment a specific case of generalized almost bicomplex structures that have been widely studied (see, for example, \cite{VANDERLEERDURAN2018, LINTOLMAN2006}). These structures are called generalized Kähler structures.

\begin{definition}[{\cite[Def. 1.8]{GUALTIERI2014}}]
\label{DEFINITION:GENERALIZEDKAHLER}
A \emph{generalized Kähler structure} is defined as an almost bicomplex structure $(\J_1, \J_2, \F)$ such that $\J_1$ and $\J_2$ are integrable with respect to the Courant bracket (see \cite[Sec. 2]{HITCHIN2003}), isometric with respect to the canonical metric $\G_0$ (that is, they form natural generalized indefinite almost Hermitian structures) and $-\F$ induces a generalized Riemannian metric as it is shown in Eq. \eqref{EQUATION:CHARACTERIZATIONGENERALIZEDMETRICS}.
\end{definition}

As it was commented in Section \ref{SECTION:INTRODUCTION}, in this paper we do not focus in the integrability of polynomial structures. Therefore, if we omit this condition from Definition \ref{DEFINITION:GENERALIZEDKAHLER} we obtain \emph{generalized almost Kähler structures}. The integrability conditions of these structures are detailed in \cite[Sec. 1.3]{GUALTIERI2014}.

The main example of a generalized almost Kähler structure is induced by an almost Hermitian manifold $(M, J, g)$ with $\phi(\cdot, \cdot) = g(J\cdot, \cdot)$ as its fundamental symplectic structure. In this case, the generalized almost bicomplex structure $(\J_{\phi}, \J_{-1, J}, -\F_g)$ presented in Proposition \ref{PROPOSITION:TRIPLEMJG} conforms a generalized almost Kähler structure.

Of course, some of the conditions asked for a generalized almost Kähler structure could be relaxed in order to study a wider range of generalized structures. For example, if we omitted the isometry condition with respect to $\G_0$, an almost Norden manifold $(M, J, g)$ with twin metric $\phi$ would induce a generalized Kähler structure from the generalized almost bicomplex structure $(\J_{\phi}, \J_{+1, J}, -\F_g)$.

There are generalized almost bicomplex structures that do not behave as a generalized Kähler structure. One example is the generalized structure $(\J_{+1, J}, \J_{-1, J}, \F_0)$: the canonical generalized almost paracomplex structure $\F_0$ does not define a Riemannian metric.

To end, we just recall a result from \cite{GUALTIERI2014} that relates each generalized almost Kähler structure on $\TM$ with a structures that is defined on the manifold $M$.

\begin{theorem}[{\cite[Thm. 1.12]{GUALTIERI2014}}]
Each generalized almost Kähler structure $(\J_1, \J_2, \F)$ on $\TM$ is equivalent to a structure $(b, g, J_1, J_2)$ determined by a 2-form $b\in \Lambda^2(M)$ on $M$, a Riemannian metric $g$ on $M$ and two almost complex structures $J_1, J_2$ that are isometric with respect to $g$. The generalized almost complex structures $\J_1, \J_2$ can be recovered from the structure $(b, g, J_1, J_2)$ using the expression
	\begin{equation*}
	  \J_{1/2} = \frac{1}{2}
	  \left(
	  \begin{array}{cc}
	    Id & 0 \\
	    \sigma  & Id
	  \end{array}
	  \right)
	  \left(
	  \begin{array}{cc}
	    J_1\pm J_2        				  & -(\sharp_{\phi_1}\mp \sharp_{\phi_2}) \\
	    \flat_{\phi_1}\mp \flat_{\phi_2}  & -(J_1^*\pm J_2^*)
	  \end{array}
	  \right)
	  \left(
	  \begin{array}{cc}
	    Id 		 & 0 \\
	    -\sigma  & Id
	  \end{array}
	  \right),
	\end{equation*}
where $\sigma\colon \xm\to \dm$ is defined as $(\sigma X)(Y) = b(X, Y)$, and $\phi_i$ is given by $\phi_i(\cdot, \cdot) = g(J_i\cdot, \cdot)$ for $i = 1, 2$.
\end{theorem}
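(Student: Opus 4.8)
The plan is to prove the equivalence as two mutually inverse constructions, organising the computation around the $B$-field transform so that the $2$-form $b$ is separated from the metric data at the very start.

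First I would isolate the role of $b$. Write $e^b\colon\GTM\to\GTM$ for the endomorphism sending $X+\xi$ to $X+(\sigma X+\xi)$, i.e.\ the factor $\left(\begin{array}{cc} Id & 0 \\ \sigma & Id \end{array}\right)$ appearing in the displayed formula, where $(\sigma X)(Y)=b(X,Y)$. Then the formula reads exactly $\J_{1/2}=e^b\,\J^0_{1/2}\,e^{-b}$, with $\J^0_{1/2}$ the middle block matrix (the $b=0$ case). A one-line computation using the skew-symmetry of $b$ shows that $e^b$ is a $\G_0$-isometry: the cross terms $b(X,Y)$ and $b(Y,X)$ cancel in $\G_0(e^b(X+\xi),e^b(Y+\eta))$. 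Since conjugation by a fixed $\G_0$-isometry preserves every defining condition of a generalized almost K\"ahler structure —$\J^2=-\mathcal Id$, isometry with respect to $\G_0$, mutual commutation, and the property that $-\F$ induces a generalized Riemannian metric via Eq. \eqref{EQUATION:CHARACTERIZATIONGENERALIZEDMETRICS}— it suffices to treat the case $b=0$, the general statement following by transport through $e^b$.

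For the direction from manifold data to the generalized structure (with $b=0$) I would describe $\J_1,\J_2$ through an eigenbundle decomposition rather than through the matrices. The $\G_0$-positive and $\G_0$-negative definite subbundles $C_\pm=\{X\pm\flat_g X : X\in\xm\}$ each project isomorphically onto $TM$ under $\pi(X+\xi)=X$, and one checks directly —using $\flat_{\phi_i}=\flat_g J_i$ and $\sharp_{\phi_i}=-J_i\sharp_g$, which follow from Proposition \ref{PROPOSITION:EQUALITIESFLATSHARP} with $\al=-1,\var=+1$— that $\J^0_1$ acts as $J_1$ on $C_+$ and as $J_2$ on $C_-$, while $\J^0_2$ acts as $J_1$ on $C_+$ and as $-J_2$ on $C_-$. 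From this summand-by-summand description the four required properties are immediate: $\J_i^2=-\mathcal Id$ because $J_1^2=J_2^2=-Id$; the isometry with respect to $\G_0$ because each $J_i$ is $g$-isometric and $\pi$ identifies $\G_0|_{C_\pm}$ with $\pm g$; the commutation because $\J_1,\J_2$ act by commuting endomorphisms on each summand; and finally $\F=\J_1\J_2$ acts as $-Id$ on $C_+$ and $+Id$ on $C_-$, so $-\F$ is the identity on the $\G_0$-positive part and minus the identity on the $\G_0$-negative part, whence $\G_0(-\F\cdot,\cdot)$ is positive definite.

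For the converse I would reverse this reading. Given a generalized almost K\"ahler structure, $\F=\J_1\J_2$ is a $\G_0$-orthogonal involution (a product structure that is a composite of two $\G_0$-isometries), so $\TM=C_+\oplus C_-$ splits into its $\mp1$-eigenbundles, which are $\G_0$-orthogonal and on which $\G_0$ is definite with signs fixed by the positivity of $\G_0(-\F\cdot,\cdot)$. Because $\xm\subset\TM$ is $\G_0$-isotropic, definiteness forces $C_\pm\cap\xm=0$, so $\pi|_{C_\pm}$ is an isomorphism and $C_\pm$ is the graph of a map $A_\pm\colon\xm\to\dm$. The $\G_0$-orthogonality of $C_+,C_-$ together with the definiteness signs then forces $A_\pm=b\pm g$ with $g$ symmetric positive definite and $b$ skew, recovering the metric and the $2$-form. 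Finally, since $\J_1,\J_2$ commute with $\F$ they preserve $C_\pm$, and transporting them to $TM$ through $\pi$ yields almost complex structures $J_1,J_2$ whose squares and $g$-isometry are inherited from $\J_1^2=\J_2^2=-\mathcal Id$ and their $\G_0$-compatibility. The two constructions are inverse to one another through the very eigenbundle dictionary used to define them. The main obstacle is precisely this extraction step: showing that $C_\pm$ are graphs over $TM$ and that the resulting $A_\pm$ split as $b\pm g$ with $b$ genuinely skew and $g$ symmetric positive definite. The projection isomorphism hinges on the interplay between definiteness of $\G_0$ on $C_\pm$ and the isotropy of $TM$, while the symmetric/skew splitting is exactly where the $\G_0$-orthogonality of the two definite eigenbundles is used; once this is in place the remaining verifications are the routine block-matrix identities already rehearsed in the forward direction.
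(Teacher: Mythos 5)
The paper offers no proof of this statement: it is quoted directly from \cite[Thm.~1.12]{GUALTIERI2014}, so the only comparison available is with Gualtieri's original argument. Measured against that, your proposal is a faithful and essentially correct reconstruction of it: the reduction to $b=0$ by conjugation with the $\G_0$-isometry $e^b$, the identification of the definite subbundles $C_\pm$ (graphs of $\pm\flat_g$, resp.\ of $b\pm g$ in general), the dictionary under which $\J_1$ acts as $J_1$ on $C_+$ and as $J_2$ on $C_-$ while $\J_2$ acts as $J_1$ and as $-J_2$, and the recovery of $(b,g,J_1,J_2)$ from the $\mp 1$-eigenbundles of $\F$ are exactly the mechanism of the cited proof. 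Your block computations, resting on $\flat_{\phi_i}=\flat_g J_i=-J_i^*\flat_g$ and $\sharp_{\phi_i}=\sharp_g J_i^*=-J_i\sharp_g$ from Proposition \ref{PROPOSITION:EQUALITIESFLATSHARP} with $\al=-1$, $\var=+1$, check out, as does the observation that $-\F$ acts as $+Id$ on the $\G_0$-positive summand and $-Id$ on the $\G_0$-negative one, which is what makes $\G_0(-\F\cdot,\cdot)$ positive definite in the sense of Proposition \ref{PROPOSITION:CHARACTERIZATIONGENERALIZEDMETRICS}.

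One slip should be corrected in the converse direction. You argue that isotropy of $\xm$ gives $C_\pm\cap\xm=0$ and hence that $\pi|_{C_\pm}$ is an isomorphism. That implication is misdirected: the kernel of $\pi$ is $\dm$, not $\xm$, so injectivity of $\pi|_{C_\pm}$ requires $C_\pm\cap\dm=0$. The fix is immediate, since $T^*M$ is also $\G_0$-isotropic while $\G_0$ is definite on $C_\pm$, but the isotropic subbundle you must invoke is $\ker\pi=\dm$. You should also record why $\pi|_{C_\pm}$ is onto: $\G_0$ has neutral signature, so any subbundle on which it is definite has rank at most $n$; as the two eigenbundles of $\F$ span $\TM$, each has rank exactly $n$, and injectivity then forces surjectivity. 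With these two lines inserted, the extraction of $A_\pm=b\pm g$ from the $\G_0$-orthogonality of $C_+$ and $C_-$, and the remaining verifications, go through as written.
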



\end{document}